\definecolor{Pantone291}{RGB}{105,179,231} %this is the RGB of pantone 292 c
\numberwithin{equation}{section}
\newtheorem{Theorem}{Theorem}[section]
\newtheorem*{Theorem*}{Theorem}
\newtheorem{Corollary}[Theorem]{Corollary}
\newtheorem{Lemma}[Theorem]{Lemma}
\newtheorem{Proposition}[Theorem]{Proposition}
\newtheorem{Definition}[Theorem]{Definition}
\newtheorem{Remark}[Theorem]{Remark}
\numberwithin{equation}{section}
\newcommand{\CC}{\mathbb C}
\newcommand{\RR}{\mathbb R}
\newcommand{\ZZ}{\mathbb Z}
\newcommand{\cA}{\mathcal A}
\newcommand{\cT}{\mathcal T}
\newcommand{\cR}{\mathcal R}
\newcommand{\cP}{\mathcal P}
\newcommand{\cM}{\mathcal M}
\newcommand{\bN}{\mathbf N}
\newcommand{\Weyl}{W}
\newcommand{\la}{\lambda}
\newcommand{\si}{\sigma}
\newcommand{\bs}{\mathbf{s}}
\newcommand{\ms}{{\underline{\mathbf{m}}}}
\newcommand {\Fl} {{F^\vee}}
\newcommand{\ft}{\mathfrak{t}}
\newcommand{\ff}{\mathfrak{f}}
\newcommand{\ab}{\mathbf{ab}}
\newcommand{\GL}{\mathop{\mathrm {GL}}\nolimits}
\def\Tr{\operatorname{Tr}}
\def\End{\operatorname{End}}
\newcommand{\cO}{\mathcal O}
\newcommand{\cK}{\mathcal K}
\newcommand{\weights}{\operatorname{wt}}
\newcommand{\something}{\aleph}
\newcommand{\GP}{\mathcal{G} \mathcal{P}}
\newcommand{\bV}{ \mathbf{V}}
\newcommand{\A}{ \mathbf{A}_d}
\newcommand{\pt}{\mathrm{pt}}
\newcommand{\IGN}{|v' \rangle}
\newcommand{\bign}{\langle v|}
\newcommand{\Sinv}{_{\text{loc}}}
\newcommand{\CH}{H}
\newcommand{\C}{C}
\newcommand{\D}{\Delta}
\newcommand{\Gr}{\operatorname{Gr}}
\newcommand{\FP}{E \cP( \RR \times \ZZ)}
\newcommand{\m}{\underline{\mathbf{m}}}
\renewcommand*\FXLayoutInline[3]{%
  {\@fxuseface{inline}
  \ignorespaces\noindent \ovalbox{\hspace{.01\textwidth} \begin{minipage}{.95\textwidth}
  	#3 \fxnotename{#1}: #2
  \end{minipage}\hspace{.01\textwidth}}}
  \newline}
\title{Analytical Traces on Coulomb Branches of Quiver Gauge Theories }
\author{Keke Zhang}
\date{}
\begin{document}

\maketitle
\begin{abstract}
   In this paper, we present an explicit construction of twisted traces for quantum Coulomb branches of conical theories. We develop an operator representation of the Coulomb branch algebra and use it to derive integral formulas for the twisted trace. Our construction provides a concrete realization of twisted traces that arise as the correlation functions of a conformal field theory, particularly in the work of Beem, Peelaers, and Rastelli. This complements recent developments in the study of twisted traces on quantum Higgs branches and offers new mathematical insights into the structure of quantum Coulomb branches.
\end{abstract}
% \tableofcontents

\section{Introduction}

\subsection{Background}

Three-dimensional \(\mathcal{N}=4\) supersymmetric gauge theories, characterized by a complex reductive group \(G\) and representation \(\mathbf{N}\), exhibit rich mathematical structures through their Higgs and Coulomb branches. A cornerstone of this framework is 3D mirror symmetry, which predicts dualities exchanging these branches. Considerations from supersymmetric field theory predict that when the 3d theory has a conformal limit, both the Higgs and Coulomb branches carry a twisted trace which encodes the correlation functions of that conformal field theory. While twisted traces on quantized algebras are fundamental to this symmetry—encoding correlation functions and enabling the construction of short star-products—explicit realizations have been largely confined to the Higgs branch.  See \cref{context} for the mathematical definitions.

Recent advances by Gaiotto et al. \cite{BenWinter} established twisted traces for quantum Higgs branches. However, it is unknown how
 the discussions in the physics literature interface at all with the BFN construction of the Coulomb branches,
despite their critical role in mirror symmetry and the assumptions of their existence in physics works like Beem–Peelaers–Rastelli \cite{Beem_2017} and their mathematical importance for the quantum Hikita conjecture \cite{kamnitzer2020quantum}.  

This paper closes that gap. 
We construct an explicit operator representation of the quantized Coulomb branch algebra \(\mathcal{A}_h(G,\mathbf{N})\) (\Cref{333}) for conical theories (\Cref{sec:operrep}).
Using this representation, we derive integral formulas for twisted traces, providing a concrete realization of objects previously assumed in physical arguments (\Cref{sec:twtr}).

  % Gaiotto \cite{GaiottoTempFile} considers the case where $\Omega$ can be written as the square $\rho^2$ of an antilinear automorphism $\rho$.
\subsection{Statement of Theorem}
In this paper, the central object is the quantized Coulomb branch algebra, \( \mathcal{A}_\hbar(G, \mathbf{N}) \), as defined in \cite[1(iii)]{braverman2019mathematical}.  See \Cref{sec2.2} for more details. 
\begin{Definition} \label{1.1}
    The pair \((G, \bN)\) is \textbf{conical} if for all 
    \(\beta \in \mathrm{t}\), 
    \begin{equation}
   \sum_{j=1}^n |a_j(\beta)| > \sum_{\alpha \in \Delta}|\alpha(\beta)|\end{equation}
    (per \cite[(A.1)]{muthiah2022}).

    Following \cite[Lemma 2.6]{BenWinter}, the conical condition is equivalent to the positivity of the \(\Delta\)-grading on \(\mathcal{A}_{\hbar}\), as discussed in \cite[Remark 2.8(2)]{braverman2018coulomb}.
\end{Definition}

To consider deformations of this algebra, we need to introduce a larger group $\tilde{G}$, which contains $G$ as a normal subgroup such that the quotient group $F=\tilde{G}/G$ is a torus. 

Let \(\mathfrak{t}_{\tilde{G}} = \mathfrak{t}_G \oplus \ff\) denote a Cartan subalgebra of $\tilde{G}$ where \(\mathfrak{t}_G\) (resp. \(\ff\)) is a Cartan subalgebra of \(G\) (resp. \(F\)) (See \Cref{coord} for the choice of embedding).
The algebra $ \mathcal{A}_\hbar(G, \mathbf{N}) $ then implicitly depends upon a mass parameter denoted by \( \m \in \mathfrak{f}.\)

 This algebra is naturally endowed with an automorphism arising from the Fayet-Iliopoulos (FI) parameter of the theory; see Definition \ref{def} for the definition. Twisted trace depends on this automorphism.
The twisted trace of $a\in \mathcal{A}_\hbar(G, \mathbf{N})$ only depends on the projection of $a$ to the fixed points of the automorphism.
Based on grading considerations, we find that the trace factors through projection to a subalgebra isomorphic to \(\operatorname{Sym}((\mathfrak{t} +\m)^*)^W\), the algebra of $W$-invariant polynomials on the coset $\ft +\m$. 

Thus we need only describe the functional on this polynomial ring induced by the twisted trace. We will describe this below as the integral with respect to a measure on $\mathfrak{t}$, 
and the FI parameter \( \zeta \) , which is dual to the mass parameter \( \m \).
Consider the weights $w\in \weights(\mathbf{N})$ of $\mathbf{N}$ and roots $\beta\in \Delta$ of $G$ as linear functions on $\mathfrak{t}_{\tilde{G}} $. %Denote the roots of \( \operatorname{GL}(V_i) \) to be \( \Delta_i = \{ \sigma_{i,r} - \sigma_{i,s} \}_{r \neq s} \)  , and weights of the \( \tilde{G} \)-representation \( \mathbf{N} \) as \( w \in \mathbf{N} \).
In order to define the measure, we use the meromorphic function   \begin{equation}  
    \mathrm{w}(\sigma + \m) = \frac{  
      %\prod_{i \in Q_0} 
      \prod_{\beta \in \Delta^+_i} (\frac{1}{\pi} \sinh(\pi \beta)  )^2
    }{  
      \prod_{w \in \operatorname{wt}(\mathbf{N})} 
      %\prod_{i \in Q_0} \prod_{r=1}^{a_i} 
      \frac{1}{\pi} \cosh\left(\pi w(\sigma + \m)\right)  
    } \qquad \text{for } \sigma\in \ft_{\RR}. 
    \end{equation}

\begin{Definition}[\mbox{\cite[Section 3.1]{2020}}] \label{def:tr}
Let $A$ be an associative algebra, and $g: A \to A$ be an invertible linear map. A \textit{$g$ twisted trace} on $A$ is a linear functional $\Tr: A \to \mathbb{C}$ satisfying:
\begin{equation} \label{twtr}
  \Tr(ab) = \Tr(bg(a))  
\end{equation}
for all $a, b \in A$. 
If $\Tr$ is a twisted trace, then applying \eqref{twtr}, with $ b=1$ shows that 
\begin{equation} \Tr(g(a)) = \Tr(a).
\end{equation}  In particular, $\Tr$ must vanish on the image of the map $1-g$.
\end{Definition}
This generalizes the notion of a trace on the algebra $A$, which is a linear map satisfying Definition \ref{def:tr} when $g$ is the identity.

\begin{Theorem} \label{prop:vector}  
For a conical pair \((G, \mathbf{N})\) coming from a quiver gauge theory of type \( A \) with \( G = \prod_{i \in Q_0} \operatorname{GL}(V_i) \) (\( V_i = \mathbb{C}^{a_i} \)) (see \cref{def} for definition), the quantized Coulomb branch algebra \( \mathcal{A}_\hbar(G, \mathbf{N}) \) for a choice of $\m, \zeta$ admits a twisted trace $\Tr$ for the automorphism $\Omega$ defined on \( R(\sigma)\in \operatorname{Sym}((\mathfrak{t} +\m)^*)^W \) by the integral:
%for a nontrivial \( \m \in \mathfrak{t}_F \) 
%fixed under the \( \Omega \)-action:  
\begin{equation}  
\Tr(R(\sigma)) = \int_{\mathfrak{t}_G} e^{2\pi \zeta(\sigma)} R(\sigma) \, \mathrm{w}(\sigma + \m) \, d\sigma,  
\end{equation}  
% where the polynomial \( R(\sigma) \), as a \( W \)-invariant function on \(\mathfrak{t}_G = \mathbb{R}^{\sum a_i}\).
\end{Theorem}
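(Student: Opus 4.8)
The plan is to use the operator representation of $\mathcal{A}_\hbar(G,\bN)$ furnished by \Cref{333} to reinterpret the proposed functional as a weighted trace of operators, and then to reduce the twisted-trace identity \eqref{twtr} to a change of variables in the integral. Concretely, in that representation the Cartan subalgebra $\operatorname{Sym}((\ft+\m)^*)^W$ acts by multiplication by $R(\sigma)$, while the minuscule monopole generators $u_\lambda$ act as shift operators $\sigma\mapsto\sigma+s_\lambda$ dressed by an explicit coefficient $c_\lambda(\sigma)$. Since a genuine shift operator has vanishing diagonal kernel, integrating the diagonal part of an element against the measure automatically annihilates all components of nonzero monopole charge; this is exactly the assertion, made on grading grounds in the introduction, that $\Tr$ factors through the projection onto $\operatorname{Sym}((\ft+\m)^*)^W$. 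I would take this as the definition of the functional and then verify (i) convergence and (ii) the twisted-trace property, it being enough to check \eqref{twtr} on the algebra generators, namely the Cartan elements and the monopoles $u_{\pm\lambda}$.

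For convergence, I would record the asymptotics of the integrand. As $\sigma\to\infty$ along a ray, the numerator $\prod_{\beta\in\Delta^+_i}(\tfrac1\pi\sinh\pi\beta)^2$ grows like $e^{\pi\sum_{\beta\in\Delta}|\beta(\sigma)|}$ and the denominator $\prod_{w}\tfrac1\pi\cosh(\pi w(\sigma+\m))$ like $e^{\pi\sum_{w}|w(\sigma)|}$, so the conical inequality of \Cref{1.1} forces $\mathrm{w}(\sigma+\m)$ to decay exponentially. The factor $e^{2\pi\zeta(\sigma)}$ contributes only a linear exponent, so the full integrand is integrable provided $\zeta$ lies in the open region where the conical gap dominates $|\zeta|$; this is the meaning of the phrase ``for a choice of $\m,\zeta$'', and I would make the admissible chamber explicit. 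Polynomial factors $R(\sigma)$ do not affect integrability.

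The heart of the argument is the twisted-trace identity on the monopoles. For $a=u_\lambda$ and $b$ of opposite charge I would write both $\Tr(u_\lambda b)$ and $\Tr(b\,\Omega(u_\lambda))$ as integrals of charge-zero coefficients and substitute $\sigma\mapsto\sigma+s_\lambda$ in the first. Three things must then match: the shift of $e^{2\pi\zeta(\sigma)}$ produces the scalar $e^{2\pi\zeta(s_\lambda)}$, which is precisely the FI character $e^{2\pi\zeta(\lambda)}$ by which the automorphism $\Omega$ of \Cref{def} rescales $u_\lambda$; the ratio $\mathrm{w}(\sigma+s_\lambda+\m)/\mathrm{w}(\sigma+\m)$ must reproduce the quotient of the monopole coefficients $c_{\pm\lambda}$, which follows from the shift functional equations of $\sinh$ and $\cosh$ and is exactly why the measure carries $\sinh^2$ in the numerator (vector contributions) and $\cosh$ in the denominator (hypermultiplet contributions); and the Cartan case reduces to ordinary cyclicity of the diagonal integral, consistent with $\Omega$ fixing $\operatorname{Sym}((\ft+\m)^*)^W$.

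I expect the main obstacle to be the legitimacy of the substitution $\sigma\mapsto\sigma+s_\lambda$. Because the shift $s_\lambda$ involves $\hbar$, this is a contour translation in the complexified $\ft$, and the measure $\mathrm{w}$ has poles where the $\cosh$ factors vanish. I would therefore have to show that the strip swept out between the original and shifted contours contains no poles (or else account for the residues), and that the exponential decay from the conical condition kills the contributions at infinity so that no boundary terms survive. Establishing this pole-free contour shift---equivalently, the self-adjointness of $u_\lambda$ up to the FI twist with respect to the pairing defined by the measure---is the delicate analytic point; once it is in place, matching the functional equations and assembling the generator checks is bookkeeping.
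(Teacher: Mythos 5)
Your proposal is essentially correct in outline, but it takes a genuinely different route from the paper. You define the functional directly by the integral formula, declare it zero on nonzero monopole charge, and verify the twisted-trace axiom \eqref{twtr} generator by generator via a contour translation $\sigma\mapsto\sigma+s_\lambda$, matching the FI character against the shift of $e^{2\pi\zeta(\sigma)}$ and the ratio $\mathrm{w}(\sigma+s_\lambda+\m)/\mathrm{w}(\sigma+\m)$ against the monopole dressing factors. This is the direct, Etingof--Klyuev--Rains--Stryker-style verification, which the paper invokes only in the abelian rank-one case (where it notes agreement with \cite[Example 3.4]{Etingof2021Twisted}). The paper instead proceeds representation-theoretically: it builds an operator representation $\alpha_C$ (and an antiholomorphic partner $\bar\alpha_C$) of $\mathcal{A}_\hbar(G,\bN)$ on a space of Gamma-function states, defines $\Tr(w)=\bign\,\alpha_C(w)\,g\,\IGN$, derives the twist property formally from the adjunction identities relating $\alpha_C$ and $\bar\alpha_C$ on the ground state (\Cref{lm:alphacwm}, \Cref{lemma:sphere4.2}, following \cite[Lemma 2.3]{BenWinter}), and only afterwards evaluates on polynomials using \cref{eq:gammacosh} and \cref{eq:sinh} to obtain the integral. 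Your convergence argument via the conical inequality is the same as the paper's \Cref{rmk}.

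The trade-off is instructive: your route is more self-contained at the level of the final formula, but it concentrates all the difficulty into exactly the point you flag as delicate, and that point is heavier than your sketch suggests. Beyond the poles of $1/\cosh$ at $w(\sigma+\m)\in i(\mathbb{Z}+\tfrac12)$ (which the dressing-factor zeros must cancel), in the nonabelian case the generators are the symmetrized sums $\bV_\lambda$ of \cref{eq:Vlambda}, whose individual Weyl-orbit terms carry $\prod w\beta$ denominators singular \emph{on the real contour itself}; the term-by-term contour shift is therefore illegitimate, and regularity holds only after summing over $W/W_\lambda$ (this is the cancellation proved in \Cref{prop:general_rep}, and it is why the measure carries the double zeros $\sinh^2(\pi\beta)$). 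The paper discharges precisely this by building the $1/(\Gamma(i\beta)\Gamma(-i\beta))$ factors into the ground state $\IGN$ so that everything is holomorphic on the strip of \Cref{def:A} (\Cref{prop:1gauge}), encoding the contour shift once and for all in the adjointness of $\cT_\pm$. One further bookkeeping item you gloss: full-unit shifts give $\cosh(\pi(x+i))=-\cosh(\pi x)$, so your substitution produces signs per weight --- the $(-1)^F$ in the twist, visible in the minus signs of \Cref{lemma:sphere4.2} --- which must be absorbed into the definition of $\Omega$ for your generator check to close.
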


\subsection{}

We develop a concrete operator representation of the algebra $\mathcal{A}_h(\CC^*, \CC)$ on a specific space of functions $\mathcal{P}(\mathbb{R} \times \mathbb{Z})$ (and its multi-dimensional and Weyl-invariant generalizations).  This is both useful as an illustrative example, and one of the building blocks of the more general construction. Such an operator representation is achieved by defining shift operators ($X, P, \widetilde{X}, \widetilde{P}$) that act by shifting a continuous variable $\sigma$ (related to the Cartan subalgebra) and a discrete variable $b$, combined with multiplication by polynomial factors. This approach is related to the difference operator representations, discussed in \cite[Appendix A]{braverman2019mathematical}.  We generalize this to all abelian cases.

The construction for a general non-abelian group $G$ is built from the abelian case (where $G$ is a torus) by means of localization to torus fixed points. As a consequence, the Coulomb branch algebra for $G$ becomes isomorphic to the $W$-invariant part of the algebra for its maximal torus $T$ after inverting certain parameters. This allows us to define operators for non-abelian monopoles $\mathbf{V}_\lambda$ as symmetrized sums of the abelian shift operators.

A special ``ground state'' vector $|1\rangle_{G,\mathbf{N}}$ is defined within the representation space motivated by the physical theory \cite[(5.1)]{GaiottoTempFile}. The \textbf{twisted trace} of an element $w\in  \mathcal{A}_\hbar(G, \mathbf{N})$ is then defined via a pairing (an inner product on the function space) involving this ground state and the action of the operator $\alpha_C(w)$:
    $$\operatorname{Tr}(w) = \langle 1| \alpha_C(w) g |1\rangle_{G,\mathbf{N}}$$
    where $g$ is related to the automorphism and $\alpha_C$ is the operator representation.

By evaluating this abstract definition, the author proves that the twisted trace on polynomial elements $R(\sigma)$ in the Cartan subalgebra is given by an explicit integral formula (Theorem 1.3 and 4.1):
    $$
    \operatorname{Tr}(R(\sigma)) = \int e^{2\pi\zeta(\sigma)} R(\sigma)  \operatorname{w}(\sigma + m)  d\sigma
    $$
    The measure $\operatorname{w}(\sigma + m)$ is constructed from hyperbolic functions ($\sinh$, $\cosh$) determined by the roots of $G$ and the weights of the representation $\mathbf{N}$. The conical condition ensures this integral converges.

\subsection{Future Directions}

In the future work, we will extend these results from the cohomological setting (discussed here) to the K-theoretic setting. 
The K-theoretic Coulomb branch can be interpreted as capturing the algebra of BPS loop operators in the $4 \mathrm{~d} \mathcal{N}=2$ gauge theory, and the quantized version of it corresponds to deformation quantization of these operator algebras\cite{CautisWilliams2023}. 
The analogous result would be an operator representation of K-theoretic Coulomb branches acting on a space of functions constructed from $q$-Gamma functions. 
This naturally suggests  an approach to constructing twisted traces on K-theoretic Coulomb branches.

The twisted trace is closely related to the quantum Hikita conjecture. The quantum Hikita conjecture \cite[Conjecture 1.1]{kamnitzer2020quantum} states that for conical theories (see \Cref{1.1}), the specialized quantum D-modules on Higgs branches and the D-modules of graded traces for Coulomb branches are isomorphic as D-modules. 
For the graded trace, the grading can be thought of as arising from a torus action, where different torus elements give different twisted traces. 
We may use this twisted trace to prove a weaker version of the quantum Hikita conjecture.

\section{Context} \label{context}

\subsection{Nakajima Quiver and Higgs Branches} 
\begin{Definition} \cite[Section 2.1]{nakajima2016introquivervarieties}
A \textbf{framed quiver} is a triple $(Q, \mathbf{v}, \mathbf{w})$, where:
\begin{enumerate}
\item $Q = (I, E)$ is a directed graph (quiver) with a set of nodes $I$ and a set of edges (arrows) $E$.
\item $\mathbf{v} = (v_i)_{i \in I} \in \mathbb{Z}_{>0}^I$ is a \textit{gauge vector} that assigns a positive integer $v_i$ to each node $i \in I$, representing the dimension of a vector space $V_i$.
\item $\mathbf{w} = (w_i)_{i \in I} \in \mathbb{Z}_{\geq 0}^I$ is a \textit{framing vector} that assigns a non-negative integer $w_i$ to each node $i \in I$, representing the dimension of a framing vector space $W_i$.
\end{enumerate}
A stability parameter $\theta \in \mathbb{R}^I$ provides a stability condition that selects stable representations in the geometric invariant theory (GIT) quotient.
\end{Definition}
Most readers will be familiar with quiver gauge theories because of their connection to Nakajima quiver varieties.
\begin{Definition} \cite[Section 2.2]{nakajima2016introquivervarieties}\label{def}
Let $(Q, \mathbf{v}, \mathbf{w})$ be a framed quiver. The \textbf{Nakajima quiver variety} $\mathcal{M}(\mathbf{v}, \mathbf{w}, \theta)$ is constructed as follows:

\begin{enumerate}
\item For each node $i \in I$, we associate vector spaces $V_i$ of dimension $v_i$ and $W_i$ of dimension $w_i$, with maps $B_i: W_i \rightarrow V_i$ and $C_i: V_i \rightarrow W_i$.

\item Let $\boldsymbol{\mu}$ be the moment map derived from the quiver's representation theory.

\item The Nakajima quiver variety is defined as the GIT quotient:
$$\mathcal{M}(\mathbf{v}, \mathbf{w}, \theta) = \mu^{-1}(\zeta)/\!\!/\!_\theta G$$
where $G = \prod_{i \in I} GL(V_i)$ is the gauge group that acts on the space of quiver representation $\bN$ and $\theta \in \mathbb{R}^I$ is the stability parameter.
\end{enumerate}
Here, the parameter $\zeta$ is called the \textbf{FI parameter}.
The space $\mathcal{M}(\mathbf{v}, \mathbf{w}, \theta)$ also denoted as $\cM_H (G, \bN)$ is the \textbf{Higgs branch} of the gauge theory associated to $(G,\bN)$.
\end{Definition}

\subsection{Coulomb branch} \label{sec2.2} Instead of the Higgs branch of these theories, we will study the Coulomb branch.
We denote the formal power series as $\CC [[t]] $ and the formal Laurent series as $\CC((t)) $. For a reductive group $G$, we denote  $G$ points over these base rings by $G_\cO= G(\CC [[ t ]] ) $ and $G_\cK= G(\CC ((t)) ) $, and we denote the affine Grassmannian as $\Gr = G_\cO/ G_\cK $. We denote their representations as $\bN_\cO$ and $\bN_\cK$, respectively.  We follow the notation of \cite[\S 2(i)]{braverman2019mathematical} and let 
\[\cR = \left(G_\cK \times_ {G_\cO }\bN_\cO \right) \times_{\bN_\cK} \bN_\cO,\qquad\qquad  \cT=G_\cK \times_ {G_\cO }\bN_\cO.\]
We consider the stack
\begin{equation}
   \frac{\bN_\cO}{G_\cO} \times_{\frac{\bN_\cK}{G_\cK}} \frac{\bN_\cO}{G_\cO} 
  =  G_\cO \backslash \mathcal{R}.
\end{equation}
% Then we have an embedding $\mathcal{T} \hookrightarrow \operatorname{Gr}_G \times \mathbf{N}_{\mathcal{K}}$ such that $\mathcal{R}=\mathcal{T} \cap\left(\operatorname{Gr}_G \times \mathbf{N}_{\mathcal{O}}\right)$. 
\begin{Definition} \label{333}
\cite[1(iii)]{braverman2019mathematical}
The \textbf{quantized Coulomb branch algebra} $\mathcal{A}_{\hbar} (G, \bN)$ is defined as the equivariant homology group $H_*^{G_{\mathcal{O}} \rtimes \mathbb{C}^{\times}}(\mathcal{R})$, where $\mathbb{C}^{\times}$ is the loop rotation group. There exists an algebra structure on it given by \cite[A(i)]{braverman2018coulomb}.
\end{Definition}
We identify $H^*_{\CC^\times}(pt) $ with $\CC[\hbar]$ as seen in \cite[1(iii)]{braverman2019mathematical}.
See \cite[Example 3.9--3.11]{webster20233dimensionalmirrorsymmetry} for the familiar examples of Coulomb branches.

There is a projection map $\pi: \cR \rightarrow \Gr$. For a cocharacter $\lambda: \mathbb{C}^* \to T \subset G$, an orbit of $G_\cO$ action in $\Gr$ which is closed for cominuscule $\la$ is denoted as $\Gr^\lambda =G_{\mathcal{O}} \cdot t^\lambda$. The preimage of $\Gr^\lambda$ is denoted as $\cR^\la$. See the commutative diagram: 
\[
\begin{tikzcd}
\mathcal{R} \arrow[d] 
  & 
\mathcal{R}^{\lambda} \arrow[d] \arrow[hook]{l}  \\
\mathrm{Gr}  & 
\mathrm{Gr}^{\lambda} \arrow[hook]{l}
\end{tikzcd}
\]
 
In this construction, the monopole operator $r^\lambda$ is defined as the fundamental class $[\cR^\la]$.

\subsection{Abelian Coulomb Branch Algebras and Localization} \label{sec:localization}

Let $\mathfrak{g}$ be the Lie algebra of the reductive group $G$. Let $T \subset G$ be a maximal torus with Lie algebra $\mathfrak{t} \subset \mathfrak{g}$, and denote by $\mathfrak{t}^*=\operatorname{Hom}_{\mathbb{C}}(\mathfrak{t}, \mathbb{C})$ its dual vector space.

Consider the symmetric algebra on $\mathfrak{t}^*$, defined as
$$
\operatorname{Sym}\left(\mathfrak{t}^*\right):=\bigoplus_{k=0}^{\infty} \operatorname{Sym}^k\left(\mathfrak{t}^*\right),
$$
which can be identified with the algebra of polynomial functions on $\ft$.
The degree zero part of $H_T^*(\mathrm{pt})$ is $\operatorname{Sym}\left(\mathfrak{t}^*\right)$.
By \cite{braverman2018coulomb}, for $T$ a torus and $Y$ its coweight lattice, we have an isomorphism of $\operatorname{Sym}(\ft^*)$-modules:
\begin{equation} \label{eq:abeliancoulomb}
    H_*^{T_{\mathcal{O}}}(\mathcal{R})=\bigoplus_{\lambda \in Y} \operatorname{Sym}\left(\mathfrak{t}^*\right) r^\lambda.
    \end{equation}

There exists the Cartan subalgebra $\Theta \cong \mathrm{Sym}(\mathfrak{t}^*)$  of the Coulomb branch algebra $\cA_\hbar (G, \bN)$, the commutative subalgebra generated by the gauge-invariant polynomials in $\ft^*$, see \cite[3(vi)]{braverman2019mathematical}. The monopole operators in $\cA_\hbar (G,\bN)$ commute with the Cartan subalgebra. 

Let \( G \) be a reductive group with maximal torus \( T \). 
By \cite[Lemma 5.10]{braverman2019mathematical}, there is an algebra homomorphism \begin{equation} \label{eq:2.3}
 \iota_*: H_*^{T_{\mathcal{O}} \rtimes \CC^*}\left(\mathcal{R}_{T, \mathbf{N}_T}\right)^W \rightarrow H_*^{G_{\mathcal{O} } \rtimes \CC^* }(\mathcal{R}) .
\end{equation}
% We identify 
% \begin{equation}
%   H_*^{G_{\mathcal{O}} \rtimes \CC^*}\left(\mathcal{R}_{G, \mathbf{N}}\right) \simeq  H_*^{T_{\mathcal{O}} \rtimes \CC^*}\left(\mathcal{R}_{G, \mathbf{N}}\right)^W.
% \end{equation}
According to \cite[Remark 5.23]{braverman2019mathematical}, the map $\iota_*$ becomes an isomorphism 
 if we invert the expressions $\hbar, \beta+m \hbar$ where $\beta$ is a root of $G$ and $m$ is an integer, considered as elements in $H_{T \times \mathbb{C}^{\times}}^*(\mathrm{pt}).$
For an algebra $A$, we denote $A _{\text{loc}}:= A [ \hbar^{-1},( \beta + a \hbar)^{-1} ]_{\beta\in \Delta, a \in \ZZ}$.

The same remark also shows that the following diagram commutes.

\begin{equation}
\begin{tikzcd} 
H_*^{T_{\mathcal{O}} \rtimes \CC^*}\left(\mathcal{R}_{T, \mathbf{N}_T}\right)^W \arrow[r, hook, "\iota_*"] \arrow[d, hook] 
& H_*^{G_{\mathcal{O} } \rtimes \CC^* }(\mathcal{R}) \arrow[d, hook] \arrow[ld, hook, "{\ab}"'] \\
 H_*^{T_{\mathcal{O}} \rtimes \CC^*}\left(\mathcal{R}_{T, \mathbf{N}_T}\right)^W \Sinv \arrow[r, "\cong"] 
& H_*^{G_{\mathcal{O} } \rtimes \CC^* }(\mathcal{R})  \Sinv
\end{tikzcd}
\end{equation}

% Consequently, to study operator representations of \( \mathcal{A}_\hbar(G, \mathbf{N}) \), it suffices to consider representations of the localized torus algebra \( \mathcal{A}_\hbar(T, \mathbf{N}_T)^W \left[ \hbar^{-1}, (\beta + a\hbar)^{-1} \right]_{\beta \in \Delta, a \in \ZZ} \). The Weyl group \( W \) acts by permuting roots, and the compatibility of this action with the algebra structure is addressed in \cite[A(ii)]{braverman2018coulomb}. 

 Given any representation of \( \cA_\hbar (T, \bN )^W [ \hbar^{-1},( \beta + a \hbar)^{-1} ]_{\beta\in \Delta, a \in \ZZ} \), we can compose with the homomorphism $\ab$ to obtain a representation of  \( H_*^{G_{\mathcal{O} } \rtimes \CC^* }(\mathcal{R})  \).
% Note that the Weyl group acts by permuting the roots, then as shown in \cite[A(ii)]{braverman2018coulomb}.

The map $\ab$ has a particularly nice description on the monopole operators corresponding to minuscule coweights $\lambda$, applying the Atiyah-Bott fixed point theorem to $\operatorname{Gr}^{\lambda}$. 
By \cite[Proposition A.2]{braverman2018coulomb}, we have an equality 
\begin{equation} \label{eq:Vlambda}
 \bV_{\lambda} =  \sum_{w\in W/W_{\lambda}}\frac{r^{w\lambda}}{\displaystyle \prod_{\substack{\beta \in \Delta ^+\\\langle \lambda, \beta\rangle \neq 0}}w\beta}.
\end{equation}
% The abelianization sends
% \begin{equation}
%   f(\sigma)  r^{\pm \lambda} \mapsto f(\sigma)^W \frac{r^ {\pm \lambda}}{ \prod_{\beta \in \Delta} \beta }.
% \end{equation}
% ---------- 
\begin{Lemma} \label{363}
There is an injective map of algebras $ R: \cA_\hbar (G, \bN) \hookrightarrow \cA_\hbar(G,0)$. The operator representation of $\cA_\hbar (G, \bN)$ is a restriction of that of $\cA_\hbar(G,0)$.
\end{Lemma}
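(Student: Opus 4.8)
The plan is to reduce the statement to the abelian case $G=T$ and then descend to general reductive $G$ by localization. Recall from \eqref{eq:abeliancoulomb} that the underlying $\operatorname{Sym}(\ft^{*})$-module $\bigoplus_{\lambda\in Y}\operatorname{Sym}(\ft^{*})\,r^{\lambda}$ of $H_{*}^{T_{\cO}}(\cR)$ is \emph{independent of} $\bN$: the matter is recorded only by the product. When $\bN=0$ the space $\cR$ degenerates to the affine Grassmannian, so $\cA_\hbar(T,0)=H_{*}^{T_{\cO}\rtimes\CC^{*}}(\Gr_{T})$ shares this module but carries the matter-free product. I would therefore define $R$ as the $\operatorname{Sym}(\ft^{*})[\hbar]$-linear map $r^{\lambda}\mapsto d_{\lambda}^{\bN}\,r^{\lambda}$, where $d_{\lambda}^{\bN}$ is the equivariant Euler class of the matter bundle $\cR^{\lambda}\to\Gr^{\lambda}$, explicitly a finite product $\prod_{w\in\weights(\bN)}\prod_{j}\bigl(w(\sigma)+j\hbar\bigr)$ with $j$ ranging over an interval determined by $\langle\lambda,w\rangle$. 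Since $d_{\lambda}^{\bN}$ is a genuine polynomial, $R(r^{\lambda})=d_{\lambda}^{\bN}r^{\lambda}$ lands in $\cA_\hbar(T,0)$ and not merely in its localization.

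To see $R$ is an algebra map I would compare the two product structures directly. The abelian BFN multiplication formula gives $r^{\lambda}\cdot_{\bN}r^{\mu}=c_{\lambda,\mu}^{\bN}\,(r^{\lambda}\cdot_{0}r^{\mu})$, where $c_{\lambda,\mu}^{\bN}$ is the overlap matter factor and $r^{\lambda}\cdot_{0}r^{\mu}=r^{\lambda+\mu}$ for the torus (up to the $\hbar$-shift $r^{\lambda}f(\sigma)=f(\sigma+\langle\lambda,-\rangle\hbar)r^{\lambda}$). Then $R$ is multiplicative precisely when the matter Euler classes satisfy the $\hbar$-shifted $2$-cocycle identity $d_{\lambda}^{\bN}\,\bigl(\lambda\!\cdot\!d_{\mu}^{\bN}\bigr)=c_{\lambda,\mu}^{\bN}\,d_{\lambda+\mu}^{\bN}$, which follows from additivity of bundle ranks and multiplicativity of Euler classes for the exact sequences relating $\bN_{\cO}\cap t^{\lambda}\bN_{\cO}$ as $\lambda,\mu$ vary. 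Injectivity is then immediate: $R$ preserves the $\lambda$-grading and acts as multiplication by the nonzero polynomial $d_{\lambda}^{\bN}$ on each free summand $\operatorname{Sym}(\ft^{*})\,r^{\lambda}$.

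For reductive $G$ I would invoke the commuting localization square and the isomorphism $\cA_\hbar(G,\bullet)_{\mathrm{loc}}\cong\cA_\hbar(T,\bullet)^{W}_{\mathrm{loc}}$ of \eqref{eq:2.3}. The factors $d_{\lambda}^{\bN}$ are built from the $W$-stable weight data of $\bN$, hence $W$-equivariant ($d_{w\lambda}^{\bN}=w\cdot d_{\lambda}^{\bN}$), so the abelian $R$ commutes with $W$ and extends to a map $R_{\mathrm{loc}}\colon\cA_\hbar(G,\bN)_{\mathrm{loc}}\to\cA_\hbar(G,0)_{\mathrm{loc}}$. The remaining point is integrality: that $R_{\mathrm{loc}}$ carries the honest subalgebra $\cA_\hbar(G,\bN)$ into $\cA_\hbar(G,0)$. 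Using \eqref{eq:Vlambda} one computes $R(\bV_{\lambda})=\sum_{w\in W/W_{\lambda}}\bigl(w\cdot d_{\lambda}^{\bN}\bigr)r^{w\lambda}\big/\prod_{\beta}w\beta$, a $W$-invariant expression whose apparent poles along the roots cancel by the same mechanism that makes $\bV_{\lambda}$ itself a class in $\cA_\hbar(G,0)$; since $\cA_\hbar(G,\bN)$ is generated by such dressed monopoles over the Cartan $\Theta\cong\operatorname{Sym}(\ft^{*})^{W}$, this gives the desired restriction. Finally, the operator-representation claim is then formal: once $R$ is an injective algebra homomorphism, the representation of $\cA_\hbar(G,\bN)$ on $\cP(\RR\times\ZZ)$ is by construction the pullback along $R$ of the representation of $\cA_\hbar(G,0)$, so each element acts exactly through its image $R(w)$.

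The main obstacle I expect is the integrality verification in the non-abelian step, namely confirming that the $W$-symmetrized dressed monopoles remain in the homology algebra $\cA_\hbar(G,0)$ rather than only in its root-localization, together with pinning down the exact overlap factor $c_{\lambda,\mu}^{\bN}$ so that the shifted cocycle identity holds on the nose; the module-level picture and injectivity are comparatively routine.
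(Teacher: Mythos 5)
Your proposal is correct and follows essentially the same route as the paper: the paper likewise obtains the inclusion $\cA_\hbar (G, \bN) \hookrightarrow \cA_\hbar(G,0)$ from the localization diagram \eqref{368} combined with the GKLO-type embedding into difference operators of \cite[Appendix A]{braverman2019mathematical}, which is exactly your abelian map $r^{\lambda}\mapsto d_{\lambda}^{\bN}\,r^{\lambda}$ extended $W$-equivariantly through \eqref{eq:2.3}. The integrality step you rightly flag as the delicate point is not argued separately in the paper either — it is absorbed into the asserted compatibility of \eqref{368}, and the pole-cancellation mechanism you sketch for the symmetrized dressed monopoles is the same one the paper later carries out in the proof of Proposition \ref{prop:general_rep}.
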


We note the compatibility of the diagram with the inclusion into the pure gauge theory. 
After inverting $\hbar$ and $\beta+m \hbar$, this inclusion becomes an isomorphism, so there are injective maps $H^{T_\cO \times \CC^* }( \pt) \rightarrow H^{T_\cO \times \CC^* }( \pt)_{\text{loc}}$ and $H^{G_\cO \times \CC^*}(\pt) \rightarrow H^{G_\cO \times \CC^*}(\pt)
_{\text{loc}} $ that make the following diagram commute.

\begin{equation} \label{368}
\begin{tikzcd}
H_*^{T_{\mathcal{O}} \rtimes \CC^*}\left(\mathcal{R}_{T, \mathbf{N}_T}\right)^W \arrow[r, hook, "\iota_*"] \arrow[d, hook] 
& H_*^{G_{\mathcal{O}} \rtimes \CC^*}(\mathcal{R}) \arrow[d, hook]  \\
H_*^{T_{\mathcal{O}} \rtimes \mathbb{C}^*}\left(\mathcal{R}_{T, 0}\right)^W \arrow[d] 
& H_*^{G_{\mathcal{O}} \rtimes \CC^*} (\cR_{G,0}) \arrow[d] \\
H_*^{T_{\mathcal{O}} \rtimes \CC^*}\left(\mathcal{R}_{T, 0 }\right)^W  \Sinv \arrow[r, "\cong"] 
& H_*^{G_{\mathcal{O}} \rtimes \CC^* }(\mathcal{R} _ {G,0}) \Sinv
\end{tikzcd}
\end{equation}

In particular, we have inclusions \begin{equation} \label{eq:G0N}
    \cA_{\hbar} (G, \bN) \hookrightarrow  \cA_{\hbar} (G, 0 ) \hookrightarrow \cA_{\hbar} (G, 0) \Sinv .
\end{equation}

 We think about \cref{eq:Vlambda} as an equality in $\cA_{\hbar} (G, 0) \Sinv$ relating the abelian and non-abelian monopole operators. 
 
The set $P_{G, \mathbf{N}}$ is defined as the collection of all elements in $\mathcal{A}_{\hbar}(G, \mathbf{N})$ that are in the image of the natural map from $H^{G_{\mathcal{O}} \times \mathbb{C}^*}(\mathrm{pt})$. These elements correspond to $W$-invariant polynomials on the Cartan subalgebra $t$, i.e., elements of $\operatorname{Sym}\left(t^*\right)^W$. This is consistent with the identification of the Cartan subalgebra $\Theta \cong \operatorname{Sym}\left(t^*\right)^W$ within $\mathcal{A}_{\hbar}(G, \mathbf{N})$.

% The algebra $ \cA_{\hbar} (G, \bN)_{\text{loc}} $ is independent of $\bN$.

% There is an automorphism of the Coulomb branch with the roots inverted which can be formally expressed by the conjugation of gamma function.
% Another compatible way to think about this is by the Teleman's method.
% By \cite[Theorem 3.1]{weekes2019generatorscoulombbranchesquiver},
% we know that for any quiver gauge theory, the algebra $\mathcal{A}_\hbar (G,\bN)$ is generated by minuscules. 
% And as an algebra over $H_{\Fl}^*(p t), \mathcal{A}_{\hbar=1}(G,0)$ is generated by $\mathcal{N H}_{G}$ and the elements $r^{ \pm \lambda }$.
% We consider the pure gauge theory with type A gauge group.
% We do the following construction by \cite[Theorem 4]{teleman2019rolecoulombbranches}.
% \begin{equation}  \label{eq:G0N}
%     \cA_\hbar (G, 0) \cap \sdw ( \cA_\hbar (G, 0)) = \cA (G, \bN)
% \end{equation}
% for an endomorphism $\sdw (x) = \Gamma x \Gamma^{-1} $.
% For $ \cA_\hbar (G, 0 ) = \CC (a) $, and $r$ a monopole operator, we have 
% \begin{align} \label{eq:commpure}
%      a r &= r (a+1) \\
%     \Gamma (a) r &= r \Gamma(a+1)
% \end{align}
% Then, we have 
% \begin{align}
%     \Gamma(a) r \Gamma^{-1} (a) &= r \Gamma (a+1)\\
%    & = r a 
% \end{align}
% It is identified with the sandwiched Nilhecke algebra 
% \begin{equation}
%     \cA( G, 0) = e \cN H_{G} e ,
% \end{equation}
% given in
% \cite[Section 2.5]{weekes2019generatorscoulombbranchesquiver}.

\section{Operator Representation} \label{sec:operrep}

In this section, we provide an operator representation of the quantized Coulomb branches, on an inner space of functions related to the Gamma function.

\subsection{Analysis} \label{sec:analysis}
\subsubsection{}
We will frequently use the following functional equations for the $\Gamma$-function:
\begin{equation} \label{funcgamma}
    \Gamma(z+1) = z \Gamma(z).
\end{equation}

\begin{equation}\label{eq:gammacosh}
    \Gamma ( \frac{1}{2} + i \sigma )   \Gamma ( \frac{1}{2} - i \sigma ) = \frac{\pi}{ \cosh{ \pi \sigma} }.
\end{equation}

The $P- \Gamma$ function $f_b\colon \RR \to \CC $ for an integer  $b\in \ZZ$ is defined  by the formula 
\begin{equation} 
f_b (\sigma )= 
p_b (\sigma) \Gamma \left( \frac{1+ 2b}{2}  + i \sigma \right),  \text{where }    p_b (\sigma)= \prod_{k=0}^{b-1} \left( \frac{2k+1}{2} +i \sigma \right) .
\end{equation}
Here, the domain of $f_b$ parameterizes the line $ Re(z)=\frac{1}{2}+b$.

We consider meromorphically extending $f_b$.
For each integer \( b \in \mathbb{Z} \), define the meromorphic function \( \tilde{f}_b: \mathbb{C} \to \mathbb{C} \) in terms of \( f_b: \mathbb{R} \to \mathbb{C} \) via the coordinate transformation  :  
\begin{equation}  \label{eq:fd}
\tilde{f}_b(z) = f_b\left(-i(z - b - \tfrac{1}{2})\right)= \Gamma (b + \frac{1}{2} + i \sigma) \cdot \prod_{i= 1}^{-b} (b + \frac{1}{2} - i + i \sigma). \quad \forall z \in \mathbb{C}.  
\end{equation}  
Equivalently, for real \( \sigma \in \mathbb{R} \), the function \( f_b \) is expressed as:  
\begin{equation}  
f_b(\sigma) = \tilde{f}_b\left(b + \tfrac{1}{2} + i\sigma\right).  
\end{equation}  
That is, the function $f_b$ is the pullback of $\tilde{f}_b$ by the identification of $\RR$ with an imaginary line.

Note that $f_d$ is a Schwartz function.
\subsubsection{}
\begin{Definition}  \label{def:A}
Let \( \A \subset \mathbb{C} \) be the open vertical strip centered at \( \Re(z) = \frac{1}{2} + d \), with width \( 2(1 + \epsilon) \), which is the set 
\[
\A = \left\{ z \in \mathbb{C} \,\bigg|\, \frac{1}{2} + d - (1 + \epsilon) < \Re(z) < \frac{1}{2} + d + (1 + \epsilon) \right\}.
\]  
Equivalently, this can be written as:  
\[
\A = \frac{1}{2} + d + (-1 - \epsilon, 1 + \epsilon) + i\mathbb{R},
\]  
where the boundaries are the vertical lines \( \Re(z) = \frac{1}{2} + d \pm (1 + \epsilon) \), extending infinitely in the imaginary direction. 
\end{Definition}  

\begin{Lemma}
The meromorphic function $\tilde{f}_d$ is holomorphic on the strip $\A$.
\end{Lemma}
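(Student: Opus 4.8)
The plan is to pin down the pole set of $\tilde f_d$ exactly and then verify that it misses the strip $\A$. The coordinate change $z \mapsto -i(z - d - \tfrac12)$ used in \eqref{eq:fd} is an affine biholomorphism of $\CC$, so $\tilde f_d$ is meromorphic on all of $\CC$ and, by the closed form in \eqref{eq:fd}, factors as $\Gamma(z)$ times the prefactor coming from $p_d$. The first step is to record what this prefactor contributes: after the coordinate change it has zeros and poles only at integers (indeed, for $d \ge 0$ it is a polynomial in $z$ whose roots are the \emph{positive} integers arising from $p_d$). Hence the prefactor creates no pole away from $\ZZ_{\le 0}$, and in particular cannot cancel any pole of $\Gamma$.

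Next I would invoke the standard fact that $\Gamma(z)$ has simple poles exactly at the non-positive integers and no zeros. Combined with the previous paragraph this yields the key structural statement
\[
\{\text{poles of }\tilde f_d\} \subseteq \ZZ_{\le 0} = \{0,-1,-2,\dots\},
\]
all of which are real with $\Re(z)\le 0$.

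Finally I would compare this with the strip. By \Cref{def:A}, $\A$ is the open vertical strip $\Re(z) > \tfrac12 + d - (1+\epsilon) = d - \tfrac12 - \epsilon$ (and $\Re(z) < d + \tfrac32 + \epsilon$). For $d \ge 1$ and $\epsilon < \tfrac12$ the left boundary satisfies $d - \tfrac12 - \epsilon > 0$, so $\A \subseteq \{\Re(z) > 0\}$. Since every pole of $\tilde f_d$ has $\Re(z) \le 0$, the strip contains no pole, and a meromorphic function with no poles on an open set is holomorphic there; this proves the claim.

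The only genuine obstacle is the bookkeeping behind the inclusion $\{\text{poles}\}\subseteq\ZZ_{\le 0}$: one must follow the coordinate transformation carefully to be sure the $p_d$-prefactor introduces no pole off the non-positive integers (in particular none on the central line $\Re(z)=\tfrac12+d$, consistent with $f_d$ being a genuine Schwartz function there), so that all poles are accounted for by $\Gamma(z)$. Once that is secured, the separation from $\A$ is the elementary inequality $d-\tfrac12-\epsilon>0$, which places the nearest pole $z=0$ strictly to the left of the strip.
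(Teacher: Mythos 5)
Your argument establishes the lemma only for $d \geq 1$ (with $\epsilon < \tfrac12$), but the statement is needed for every $d \in \ZZ$: the strip $\A$ in \Cref{def:A} is centered at $\Re(z) = \tfrac12 + d$ for the \emph{same} integer $d$ that labels $\tilde f_d$, and non-positive labels occur essentially in the representation (e.g.\ $X^m|1\rangle = \tilde f_{-m}$ in \eqref{eq:Xm}). For $d = 0$ the strip contains $z = 0$, and for $d \leq -1$ it contains $z = d$ and $z = d+1$, which are non-positive integers and hence pole locations of the $\Gamma$-factor. In exactly these cases your key structural claim --- that the $p_d$-prefactor ``cannot cancel any pole of $\Gamma$'' --- fails: combined with your inclusion $\{\text{poles of } \tilde f_d\} \subseteq \ZZ_{\leq 0}$ and no cancellation, it would force $\tilde f_0 = \Gamma$ to have a pole at $0 \in \mathbf{A}_0$, i.e.\ your reasoning would \emph{disprove} the lemma for $d \leq 0$ rather than prove it. So this is not a bookkeeping issue that can be patched at the end; the case you dispose of ($d \geq 1$, where $\A$ lies in the right half-plane and misses all poles of $\Gamma$) is the trivial one.

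The paper's proof is precisely about the case you omit. For $\epsilon < \tfrac12$ the only integers inside $\A$ are $z = d$ and $z = d+1$; these are poles of the $\Gamma$-factor only when they are non-positive, and for such $d$ the product defining the prefactor in \eqref{eq:fd} (the product with $-d$ linear factors, nonempty exactly when $d < 0$) vanishes at those points and cancels the simple poles --- this cancellation is the entire content of the lemma. You appear to have read off the root locations only from the $d \geq 0$ form $p_d(\sigma) = \prod_{k=0}^{d-1}\left(\tfrac{2k+1}{2} + i\sigma\right)$ (roots at positive integers, harmless but also useless) and then tacitly assumed the same holds for $d \leq 0$. To repair the proposal you must treat $d \leq 0$ separately: identify the zeros of the prefactor for non-positive $d$, check they land on the non-positive integers $z = d,\, d+1$ lying in $\A$, and conclude holomorphy from the cancellation of simple poles by simple zeros, as the paper does. (Your explicit hypothesis $\epsilon < \tfrac12$ is harmless; the paper uses it implicitly as well, since otherwise further integers enter the strip.)
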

\begin{proof}
 On the extended strip $\frac{1}{2} + d \pm (1 + \epsilon)i\mathbb{R}$, the $\Gamma$ functions are holomorphic except for simple poles at $z=d$ and $z=d+1$ when these are negative integers. However, the $p_{d}$ term in the 
  $P-\Gamma$ function precisely cancels these poles.
This completes the proof.
\end{proof}

\begin{Definition} \label{def:P} 
  Consider the set $\RR \times \ZZ $, identified with a subset of $\CC$ by the injective map $(a,b) \mapsto ia+b $.
   We let $\mathcal{P} (\RR \times \ZZ)$ be 
  the span (finite linear combinations) of functions on $\RR \times \ZZ $
  \begin{equation}
   g_{b',n}(\sigma,b) = (1/2+b +i\sigma)^n \delta_{b,b'}f_{b}(\sigma)
  \end{equation}
  for all $n\in \ZZ_{\geq 0}$, $b'\in\ZZ$ and $\sigma \in \RR$.
\end{Definition}
\subsubsection{}

\begin{Proposition} \label{prop:innerprod1}
The space \(\mathcal{P}(\mathbb{R} \times \mathbb{Z})\) admits a well-defined inner product via the Haar measure on \(\mathbb{R} \times \mathbb{Z}\). Explicitly, for \(f, g \in \mathcal{P}(\mathbb{R} \times \mathbb{Z})\), the inner product is:  
\begin{equation} \label{eq:pairing}
\langle g| f \rangle = \sum_{n \in \mathbb{Z}} \int_{\mathbb{R}}  f(x, n) \, \overline{g(x, n)} \, dx,  
\end{equation}
since the Haar measure is the product of the Lebesgue measure \(dx\) on \(\mathbb{R}\) and the counting measure on \(\mathbb{Z}\).  
\end{Proposition}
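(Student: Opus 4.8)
The plan is to check that the right-hand side of \eqref{eq:pairing} converges for every $f,g\in\mathcal{P}(\RR\times\ZZ)$ and that the resulting sesquilinear form is Hermitian and positive-definite. The single analytic input I would isolate first is the following: for each fixed level $n\in\ZZ$ and each $f\in\mathcal{P}(\RR\times\ZZ)$, the slice $x\mapsto f(x,n)$ is a Schwartz function on $\RR$. Since every $f$ is by Definition~\ref{def:P} a \emph{finite} linear combination of generators, and each generator is supported on a single level where it restricts to $(\tfrac12+n+ix)^{m}f_{n}(x)$ for some $m\ge 0$ (and vanishes on all other levels), it suffices to know that $f_b$ is Schwartz for each $b$. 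This follows from the paper's own functional equations: iterating $\Gamma(z+1)=z\Gamma(z)$ from \eqref{funcgamma} reduces $\Gamma(\tfrac12+b+ix)$ to $\Gamma(\tfrac12+ix)$ up to a polynomial factor, and \eqref{eq:gammacosh} gives $|\Gamma(\tfrac12+ix)|^{2}=\pi/\cosh(\pi x)$, so $|f_b(x)|$ decays like a polynomial times $e^{-\pi|x|/2}$. The same reasoning applied after differentiation shows all derivatives decay rapidly, and multiplication by the polynomial factor $(\tfrac12+n+ix)^{m}$ preserves the Schwartz class.

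Granting this, convergence is immediate. Each generator is supported on one discrete level, so any finite combination $f$ is supported on finitely many levels $n$; hence the outer sum $\sum_{n\in\ZZ}$ in \eqref{eq:pairing} has only finitely many nonzero terms and poses no convergence problem. For each relevant level the integrand $f(x,n)\overline{g(x,n)}$ is a product of two Schwartz functions, hence Schwartz, hence absolutely integrable over $\RR$. Thus $\langle g|f\rangle$ is a finite sum of convergent integrals and is well-defined.

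It then remains to verify the inner-product axioms. Linearity in $f$, conjugate-linearity in $g$, and the Hermitian symmetry $\langle g|f\rangle=\overline{\langle f|g\rangle}$ are all immediate from the explicit formula \eqref{eq:pairing}. For positive-definiteness I would write
\[
\langle f|f\rangle=\sum_{n\in\ZZ}\int_{\RR}|f(x,n)|^{2}\,dx\ge 0,
\]
and observe that vanishing forces $\int_{\RR}|f(x,n)|^{2}\,dx=0$ for every $n$; since each slice $f(\cdot,n)$ is continuous (being Schwartz), this yields $f(\cdot,n)\equiv 0$ for all $n$, i.e.\ $f=0$ as a function on $\RR\times\ZZ$. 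The only step with genuine content is the Schwartz/exponential-decay estimate, which controls integrability at infinity; I expect this to be the main (if mild) obstacle, while the remaining verifications are routine.
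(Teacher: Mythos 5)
Your proof is correct and takes essentially the same approach the paper intends: the paper offers no separate argument for this proposition, relying on its earlier remark that $f_b$ is a Schwartz function together with the fact that elements of $\mathcal{P}(\mathbb{R}\times\mathbb{Z})$ are supported on finitely many levels of $\mathbb{Z}$, which is exactly the decomposition you use. Your derivation of the exponential decay from \eqref{funcgamma} and \eqref{eq:gammacosh}, and the routine verification of the inner-product axioms, simply supply the details the paper leaves implicit.
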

A standard application of the Weierstrass approximation theorem shows that \(\mathcal{P}(\mathbb{R} \times \mathbb{Z})\) is dense in \(L^2(\mathbb{R} \times \mathbb{Z})\) under the topology induced by this inner product.

In fact, the following generalization of \Cref{prop:innerprod1} holds: the Haar measure on $\FP^n$ (defined in \Cref{3.18}) is given by the usual Lebesgue measure on $\RR^n$ and the counting measure on $\ZZ^n$, as
\begin{equation} \label{eq:GNprd}
\langle g| f\rangle= \sum_{n \in \mathbb{Z}^m} \int_{\mathbb{R}^m} f(x, n) \overline{g(x, n)} d x.
\end{equation}

\subsubsection{} \label{3.1.4}
We consider the following operators acting on the space $\mathcal{P}(\mathbb{R} \times \mathbb{Z})$, where all functions extend holomorphically to strips, which do not hold for arbitrary elements of $L^2 (\RR \times \ZZ)$. The operators involve shifts in the complex variable $\sigma$ and the discrete variable $b$, which we define rigorously below. 

For a function $\psi(\sigma, b) \in \mathcal{P}(\mathbb{R} \times \mathbb{Z})$, the shift $\sigma \mapsto \sigma \pm \frac{i}{2}$ is implemented via analytic continuation. Specifically, since $\psi(\cdot, b)$ is analytic on the strip $\A$ for each fixed $b$, the strip $\A$ is enough for us to perform the following extension. We extend $\psi(\sigma, b)$ to $\sigma \pm \frac{i}{2}$ by expanding it in a power series around $\sigma$ and substituting the shifted variable. This continuation is valid provided the function remains analytic within a neighborhood containing the line segment from $\sigma$ to $\sigma \pm \frac{i}{2}$.
If this function is analytic, we let $\cT_{\pm}$ be
\[
\mathcal{T}_{\pm} \psi(\sigma, b) = \psi\left(\sigma \pm \frac{i}{2}, b\right).
\]
It will also sometimes be useful to use this notation even if $\psi(\sigma,b)$ only has a meromorphic continuation with isolated poles.  The formula $\cT_+$ above defines a linear map  $\mathcal{P}(\mathbb{R} \times \mathbb{Z}) \rightarrow \mathcal{P}(\mathbb{R} \times \mathbb{Z})$.  
We can still interpret $T_-f_d$ as a meromorphic function, but due to the placement of poles, it might not be analytic. For example, $\cT_- f_0$ has a simple pole at $(\si, b) = (0,0) $.   In particular, it does not preserve  $\mathcal{P}(\mathbb{R} \times \mathbb{Z})$.  We will define operators below where we multiply by additional polynomial factors so that this subspace is preserved.

These operations may seem strange, but they are the image under the Mellin transform naturally densely defined on $L^2(\CC^{\times})$ of the operators of multiplication by the coordinate functions $ x^{\pm 1}$ and their conjugates $ \bar{x}^{\pm 1}$.
Let us emphasize that an element of $\cP(\RR \times \ZZ)$ will be supported on finitely many elements of $\ZZ$, and $\cT_\pm$ changes the support of a function as a set, but does not change its finiteness.

\subsubsection{}
Define \textbf{shift operators} over $\cP ( \RR \times \ZZ) $
\begin{align} \label{eq:v}
(v \psi)(\sigma, b) &=\mathcal{T}_{+}[\psi(\sigma, b-1)]=\psi\left(\sigma+\frac{i}{2}, b-1\right) .\\\label{eq:v2}
(\tilde{v} \psi)(\sigma, b) &=\mathcal{T}_{-}[\psi(\sigma, b-1)]=\psi\left(\sigma-\frac{i}{2}, b-1\right) .
\end{align}

By \Cref{def:P}, we can imagine \cref{eq:v} \cref{eq:v2} as meromorphically extending from the line $\Re(z)=d$ and then shifting the complex plane half a unit to the left or right.

Using $\mathcal{T}_{\pm}$, Gaiotto defined \textbf{monomial-shift operators} $X, P, \widetilde{X}, \widetilde{P}$ to combine these shifts with adjustments to $b$ and multiplicative coefficients \cite[Section 5.2]{GaiottoTempFile}:
\begin{align} 
\label{eq:Xop}  X \psi(\sigma, b) &= \mathcal{T}_{+} \psi(\sigma, b-1) = \psi\left(\sigma + \frac{i}{2}, b-1\right), \\
\label{eq:Pop}  P \psi(\sigma, b) &= \left(\frac{1}{2} + i\sigma + \frac{1}{2}b\right) \mathcal{T}_{-} \psi(\sigma, b+1) = \left(\frac{1}{2} + i\sigma + \frac{1}{2}b\right) \psi\left(\sigma - \frac{i}{2}, b+1\right), \\
 \label{eq:Xtildeop} \widetilde{X} \psi(\sigma, b) &= \left(\frac{1}{2} + i\sigma - \frac{1}{2}b\right) \mathcal{T}_{-} \psi(\sigma, b-1) = \left(\frac{1}{2} + i\sigma - \frac{1}{2}b\right) \psi\left(\sigma - \frac{i}{2}, b-1\right), \\
 \label{eq:Ptildeop} \widetilde{P} \psi(\sigma, b) &= \mathcal{T}_{+} \psi(\sigma, b+1) = \psi\left(\sigma + \frac{i}{2}, b+1\right).
\end{align}
Therefore,
\begin{subequations} \label{eq:XPPX}
\begin{align}
X P \psi(\sigma, b) &= \left(-\frac{1}{2} + i \sigma + \frac{1}{2} b \right) \psi(\sigma, b), \label{eq:XP} \\
P X \psi(\sigma, b) &= \left(i \sigma + \frac{1}{2} b + \frac{1}{2} \right) \psi(\sigma, b), \label{eq:PX} \\
\widetilde{X} \widetilde{P} \psi(\sigma, b) &= \left(i \sigma - \frac{1}{2} b + \frac{1}{2} \right) \psi(\sigma, b), \label{eq:tXtP} \\
\widetilde{P} \widetilde{X} \psi(\sigma, b) &= \left(-\frac{1}{2} + i \sigma - \frac{1}{2} b \right) \psi(\sigma, b). \label{eq:tPtX}
\end{align}
\end{subequations}

      For \begin{equation} \label{eq:1}
             |1 \rangle = \delta_{b,0} \Gamma (\frac{1}{2} + \frac{b}{2} - i \sigma ) , \end{equation}
             under the definition of (\ref{eq:Xop}--\ref{eq:1}), the following holds:
% \begin{equation}
%     X|1 \rangle = \Gamma\left(\frac{1}{2}(1+b-2\mathrm{i}\sigma)\right)\delta_{0,-1+\mathrm{b}}, \quad  P | 1 \rangle = \frac{1}{2}(1 + b + 2i\sigma)\Gamma\left(\frac{1}{2}(1 + b - 2\mathrm{i}\sigma)\right)\delta_{0,1+b},
% \end{equation}
% in addition, we have  
\begin{align} \label{eq:Xm}
    X^m |1 \rangle &= \delta_{b,m} \Gamma \left( \frac{1}{2} + \frac{b}{2} - i \sigma \right) = \tilde{f}_{-m}, \\ \label{eq:Pn}
    P^n |1 \rangle &=  \delta_{b, -n} \Gamma \left( \frac{b}{2} + \frac{1}{2} - i \sigma \right) \cdot \prod_{k=1}^{n} \left( i \sigma + \frac{b}{2} + \frac{2k-1}{2} \right) = \tilde{f}_n.
    \end{align}

\begin{align}
    XP |1\rangle &= 
\frac{1}{2} \left(-1 + b + 2i\sigma\right) \Gamma\left(\frac{1}{2}\left(1 + b - 2i\sigma\right)\right) \delta_{0, b}
\\
    PX|1 \rangle & = 
\frac{1}{2} \left(1 + b + 2i\sigma\right) \Gamma\left(\frac{1}{2}\left(1 + b - 2i\sigma\right)\right) \delta_{0, b}
\end{align}
Therefore, defining $\mu = :XP : = XP+ \frac{1}{2} = PX - \frac{1}{2}$, we have that 
\begin{equation} \label{611}
    \mu |1 \rangle = 
   \frac{1}{2}(b+2 i \sigma) \Gamma\left(\frac{1}{2}(1+b-2 i \sigma)\right) \delta_{0, b}.
\end{equation}

The equation \cref{611} tells us that $\mu$ is multiplication by the complex coordinate we were using to embed $\RR \times \ZZ$ into $\CC$.

\subsubsection{}
 
% \begin{Remark}
%     The alternative definition \eqref{fourier} is the value of the Fourier transform of the distribution $R(-\mathrm{i} y) w(0,-\mathrm{i} y)$ on the line $\frac{1}{2} - i\sigma$.
% \end{Remark}

\begin{Lemma}
 The operators $X, P, \widetilde{X}, \widetilde{P}$, defined explicitly in equations \eqref{eq:Xop}--\eqref{eq:Ptildeop}, acting as monomial-shift operators, map $\mathcal{P}(\mathbb{R} \times \mathbb{Z})$ into itself.
\end{Lemma}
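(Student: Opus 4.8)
The plan is to use linearity to reduce to the spanning functions $g_{b',n}$ and then track how each operator acts slice by slice. First I would record the structural observation that, restricted to the single slice $b = b'$, the part of $\cP(\RR \times \ZZ)$ supported there is exactly $\CC[\mu]\, f_{b'}$, where $\mu = \tfrac12 + b + i\sigma$ is the coordinate identified in \eqref{611}; indeed $g_{b',n}$ restricted to $b = b'$ equals $\mu^n f_{b'}(\sigma)$. Since each of $X, P, \widetilde{X}, \widetilde{P}$ shifts the discrete support $b' \mapsto b' \pm 1$, preserves finiteness of support (as emphasized in \Cref{3.1.4}), and multiplies by a factor linear in $\mu$, it suffices to show that the image of each generator is a finite $\CC[\mu]$-combination of $f_{b'\pm 1}$ on the shifted slice.

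The analytic core is a pair of ladder identities for the $P$-$\Gamma$ function under the half-shifts $\cT_{\pm}$. Using the functional equation $\Gamma(z+1) = z\Gamma(z)$ together with the definition $f_b(\sigma) = p_b(\sigma)\Gamma(\tfrac{1+2b}{2} + i\sigma)$ and the explicit product $p_b$, I would show that $\cT_{+} f_b$ equals $f_{b-1}$ times an explicit factor polynomial in $\mu$, while $\cT_{-} f_b$ equals $f_{b+1}$ times a rational factor whose only denominator is a single linear term in $\mu$. These are precisely the relations already visible in \eqref{eq:Xm} and \eqref{eq:Pn}, where iterating $X$ and $P$ on $|1\rangle$ reproduces the families $\tilde{f}_{-m}$ and $\tilde{f}_{n}$; the holomorphy of the shifted functions on the strip $\A$ is guaranteed by the Lemma preceding \Cref{def:P}, so no new analytic input is needed beyond the functional equation.

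With these identities in hand, the four cases become mechanical. For $X = \cT_{+}(\,\cdot\,)_{b-1}$ and $\widetilde{P} = \cT_{+}(\,\cdot\,)_{b+1}$, which carry no prefactor, the $\cT_{+}$-ladder relation already produces a polynomial-in-$\mu$ multiple of the neighboring $f$, so the image lies in $\cP(\RR \times \ZZ)$ with the degree in $\mu$ raised by at most one. For $P$ and $\widetilde{X}$, which apply $\cT_{-}$ and are decorated with the linear prefactors $(\tfrac12 + i\sigma + \tfrac12 b)$ and $(\tfrac12 + i\sigma - \tfrac12 b)$ respectively, the point is that these prefactors are engineered to be exactly the linear denominator produced by the $\cT_{-}$-ladder relation; after cancellation the image is again a genuine polynomial-in-$\mu$ multiple of $f_{b+1}$ (resp.\ $f_{b-1}$), with no residual pole. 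Finiteness of the resulting linear combination follows since each operator raises the polynomial degree by at most one and shifts the support by exactly one.

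I expect the main obstacle to be the precise bookkeeping in the ladder identities: one must verify that the half-integer shift of the $\Gamma$-argument, the extra or missing linear term from $p_b$, and the prefactor all combine to yield an \emph{integer}-shifted $\Gamma$-function, i.e.\ an honest $f_{b\pm1}$, rather than a spurious ratio of $\Gamma$'s or an uncancelled pole. Getting the half-unit shift to line up with the definition of $p_b$ for both positive and negative $b$, where the product defining $p_b$ changes character, is the delicate point, and I would treat the cases $b \ge 0$ and $b < 0$ separately, using the meromorphic continuation \eqref{eq:fd} to handle the negative range uniformly.
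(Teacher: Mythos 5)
Your proposal is correct and takes essentially the same route as the paper: reduce by linearity to the spanning functions $\delta_{b,b'}\,p(\sigma)f_{b}(\sigma)$, apply each of $X, P, \widetilde{X}, \widetilde{P}$, and recognize the image as a polynomial multiple of the $P$-$\Gamma$ function on the shifted slice. The only difference is one of detail: the paper computes the four images and then simply asserts membership in $\mathcal{P}(\mathbb{R}\times\mathbb{Z})$, whereas you make explicit the $\Gamma(z+1)=z\Gamma(z)$ ladder identities and the cancellation of the pole produced by $\mathcal{T}_{-}$ against the linear prefactors in $P$ and $\widetilde{X}$ (with the $b\ge 0$ versus $b<0$ case split for $p_b$) --- precisely the verification the paper leaves implicit.
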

\begin{proof}
By Definition \ref{def:P}, it suffices to compute  \eqref{eq:Xop}--\eqref{eq:Ptildeop} acting on $\delta_{b,b'}p (\si) f_b(\sigma)$ for $p(\si)$ an arbitrary polynomial.
We compute
\begin{equation}
\begin{aligned}
& X\left(\delta_{b, b} p(\sigma) f_b(\sigma)\right)=\delta_{b, b^{\prime}+1} p\left(\sigma+\frac{i}{2}\right) f_b\left(\sigma+\frac{i}{2}\right) \\
& P\left(\delta_{b, b} p(\sigma) f_b(\sigma)\right)=\delta_{b, b^{\prime}-1}\left(i \sigma+\frac{1}{2} b^{\prime}\right) p\left(\sigma-\frac{i}{2}\right) f_b\left(\sigma-\frac{i}{2}\right) \\
& \widetilde{X}\left(\delta_{b, b} p(\sigma) f_b(\sigma)\right)=\delta_{b, b^{\prime}+1}\left(i \sigma-\frac{1}{2} b^{\prime}\right) p\left(\sigma-\frac{i}{2}\right) f_v\left(\sigma-\frac{i}{2}\right) \\
& \widetilde{P}\left(\delta_{b, b} p(\sigma) f_b(\sigma)\right)=\delta_{b, b^{\prime}-1} p\left(\sigma+\frac{i}{2}\right) f_{b^{\prime}}\left(\sigma+\frac{i}{2}\right)
\end{aligned}
\end{equation}
and we note that each of the functions lies in $\cP (\RR \times \ZZ) $.
Thus, \(X, P, \widetilde{X}, \widetilde{P}\) maps \(\mathcal{P}(\mathbb{R} \times \mathbb{Z})\) to itself.
\end{proof}

\subsection{Abelian Pure Gauge Theory}
We use $\mathcal{F} \cP (\RR \times \ZZ)$ to denote the space of functions on $ \RR \times \ZZ $ which extend to meromorphic functions with isolated poles in the complex plane, as discussed in \Cref{3.1.4}.

\begin{Lemma} \label{lemma:pureab}
    For $G = \CC^*$ with trivial $\bN$, we have a holomorphic operator representation of the quantized Coulomb branch algebra $\mathcal{A}_\hbar(G, 0)$ on $\mathcal{FP}(\RR \times \ZZ)$ by the following map $ \alpha_C$:
    \begin{align}
        r &\mapsto v,
    \end{align}
    where $t$ is a generator of $\CC[\ZZ]$. 
    with \( z := \frac{1}{2} - i\sigma + \frac{1}{2} b \). The shift operator $v$ is defined by \cref{eq:v}.
    Similarly, we have an anti-holomorphic operator representation by the following map $\bar{\alpha}_C$:
    \begin{align}
        r &\mapsto - \widetilde{v}  .
    \end{align}
\end{Lemma}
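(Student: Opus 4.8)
The plan is to reduce the statement to a generators-and-relations presentation of $\mathcal{A}_\hbar(\CC^*,0)$ and then verify those relations for the proposed operators. By \eqref{eq:abeliancoulomb} applied to $T=\CC^*$, the coweight lattice is $Y=\ZZ$ and $\operatorname{Sym}(\ft^*)=\CC[\varphi]$, so $\mathcal{A}_\hbar(\CC^*,0)$ is free as a $\CC[\hbar][\varphi]$-module on the monopole classes $\{r^\lambda\}_{\lambda\in\ZZ}$. Since the matter is trivial, the BFN product carries no structure constants, and the algebra is generated by $r:=r^1$, its inverse $r^{-1}$, and the Cartan generator $\varphi$ subject only to the quantum-torus-type relations
\begin{equation}
r^\lambda r^\mu = r^{\lambda+\mu}, \qquad [\varphi, r^\lambda] = \lambda\hbar\, r^\lambda .
\end{equation}
Thus it suffices to exhibit operators on $\mathcal{FP}(\RR\times\ZZ)$ realizing exactly these relations.

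First I would fix the images of the generators and the normalization of $\hbar$. For the holomorphic map I set $\alpha_C(r)=v$ with $v$ the shift \eqref{eq:v}, $\alpha_C(r^{-1})$ the inverse shift $\psi(\sigma,b)\mapsto\psi(\sigma-\tfrac{i}{2},b+1)$, and $\alpha_C(\varphi)$ multiplication by $\mu=i\sigma+\tfrac{b}{2}$, the operator identified in \eqref{611} as multiplication by the complex coordinate; the half-unit shift forces the specialization $\hbar\mapsto 1$. The relation $v^\lambda v^\mu = v^{\lambda+\mu}$ is immediate, as the shifts compose additively. For the commutation relation I would use, for a multiplication operator $M_g$ and the shift $v$, the identity $[M_g,v]=M_{g-g\circ S}\,v$, where $S(\sigma,b)=(\sigma+\tfrac{i}{2},b-1)$; evaluating $\mu\circ S=\mu-1$ yields $[\mu,v]=v$, matching $\lambda=1$ (signs are pinned down here, so a differing BFN convention is absorbed by the sign of $\mu$).

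The anti-holomorphic map is handled identically using $\tilde v$ from \eqref{eq:v2}, whose shift $\tilde S(\sigma,b)=(\sigma-\tfrac{i}{2},b-1)$ satisfies $z\circ\tilde S=z-1$ for $z=\tfrac12-i\sigma+\tfrac{b}{2}$, so that $[M_z,-\tilde v]=-\tilde v$; the sign in $r\mapsto-\tilde v$ is precisely what makes $-\tilde v$ match the generator $r$ and what renders the two representations formal adjoints under the pairing \eqref{eq:pairing}. I would then check that all operators genuinely act on $\mathcal{FP}(\RR\times\ZZ)$, which is the real analytic content and the reason one works with $\mathcal{FP}$ rather than the smaller $\mathcal{P}$ of \Cref{def:P}: multiplication by the polynomials $\mu,z$ preserves meromorphy, while $v,\tilde v$ and their inverses merely translate the plane by a half unit and hence send functions meromorphic with isolated poles to functions of the same type, so $v$ is invertible on $\mathcal{FP}$, matching the invertibility of $r$.

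The main obstacle I anticipate is not the algebra—the relations are those of a quantum torus and verify by a one-line shift computation—but the analytic bookkeeping: confirming that the inverse shift $v^{-1}$ (which leaves $\mathcal{P}$ by creating poles, exactly as $\cT_-f_0$ acquires a simple pole at $(\sigma,b)=(0,0)$) still acts on $\mathcal{FP}$, and pinning down the $\hbar$-normalization and sign conventions so that $r\mapsto v$ (resp.\ $-\tilde v$) is literally a homomorphism rather than one only up to scalars. These are precisely the points where the passage from $\mathcal{P}$ to $\mathcal{FP}$ is essential.
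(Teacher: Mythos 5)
Your proposal is correct and follows essentially the same route as the paper: the paper likewise reduces the lemma (citing Teleman's Example 7.2 for the presentation where you instead invoke \eqref{eq:abeliancoulomb} and the quantum-torus relations) to verifying $uv = v(u+1)$ for $u = i\sigma + \tfrac{1}{2}b$, and checks it by exactly your half-unit shift computation, with the anti-holomorphic case ``done similarly.'' Your additional bookkeeping --- the $\hbar\mapsto 1$ normalization, the sign pinning for $-\tilde v$, and the verification that shifts and polynomial multiplications preserve $\mathcal{FP}(\RR\times\ZZ)$ (where $v^{-1}$ lives even though it leaves $\cP(\RR\times\ZZ)$) --- is left implicit in the paper, so your write-up is if anything more complete.
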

We comment on the connection to the GKLO representation. The actions are defined using the same embedding of $\cA_\hbar (G, \bN)$ into difference operators as \cite[A.1]{braverman2019mathematical}, but incorporating two different actions on the ring of difference operators on the functions on $\RR \times \ZZ$, which come from holomorphic and anti-holomorphic continuation of functions.
\begin{proof}
By \cite[Example 7.2]{teleman2019rolecoulombbranches}, 
to show that the representations are well-defined, it is enough to show the following holds:
    \begin{equation}
u v=v(u+1) \qquad\qquad \widetilde{u} \widetilde{v}=\widetilde{v}(-\widetilde{u}+1)
\end{equation}
for multiplication operators $u=i \sigma+\frac{1}{2} b$ and $\widetilde{u}=i \sigma-\frac{1}{2} b$. 

We check the first relation holds:\begin{equation}
\begin{split}
[v((u+1) \psi)](\sigma, b) 
&= \left(i \left(\sigma + \frac{i}{2} \right) + \frac{1}{2} (b - 1) + 1\right) \\
&\quad \times \psi\left(\sigma + \frac{i}{2}, b - 1\right) \\
&= \left(i \sigma + \frac{1}{2} b\right) \psi\left(\sigma + \frac{i}{2}, b - 1\right) \\ &=[u(v \psi)](\sigma, b).
\end{split}
\end{equation}
The second one is done similarly.
\end{proof}

For larger abelian $T =( \CC^*)^n $ with trivial matter, the $n$-fold direct sum of the representations of $\cA ( \CC^* , 0) $ gives a representation of $ \cA ((\CC^*)^n , 0 ) $. For $T^n = (\mathbb{C}^\times)^n$ with cocharacter lattice $\mathbb{Z}^n$, the weights of the standard representation $\mathbb{C}^n$ are the coordinate projections $\xi_i : \mathbb{Z}^n \to \mathbb{Z}$, $\xi_i(\lambda) = \lambda_i$.

We denote the operators acting on the $i$-th coordinate as  $v_i$, where each $v_i$ is given by \eqref{eq:v}. 

\begin{Lemma} \label{lemma:pureTrep}
There is a representation of $\cA (T^n, 0)$ on $\cP (\RR \times \ZZ)^n$ 
   We call this the {\bf holomorphic operator representation}.
  The representation is given by 
    \begin{equation} \label{eq:3.62}
    r^\lambda \mapsto  \prod_i 
  v_i^{ \xi_i (\lambda )}
     \end{equation}
     for characters $\xi_i$ and coweights $\lambda$.
  The antiholomorphic operator representation can be expressed similarly by   \begin{equation}
    r^\lambda \mapsto  \prod_i 
 - \tilde{v}_i^{ \xi_i (\lambda )}.
     \end{equation}
     Here, the operators $\tilde{v}_i$ are \eqref{eq:v} on the $i$-th coordinate.
\end{Lemma}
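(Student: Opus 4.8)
The statement for $T^n = (\CC^\times)^n$ is a direct product, so I would reduce everything to the rank-one case $G = \CC^*$ already handled in Lemma~\ref{lemma:pureab} and then assemble the tensor factors. Recall from \eqref{eq:abeliancoulomb} that $\cA_\hbar(T^n, 0) = \bigoplus_{\lambda \in \ZZ^n} \operatorname{Sym}(\ft^*)\, r^\lambda$, and that the multiplication of monopole operators in the abelian pure-gauge case is the ``naive'' one, $r^\lambda r^\mu = r^{\lambda + \mu}$, with the only nontrivial relations being the commutation of the $r^\lambda$ with the polynomial generators $u_i = i\sigma_i + \tfrac12 b_i$ of $\operatorname{Sym}(\ft^*) \cong H_{T^n}^*(\pt)$. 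Thus to verify that \eqref{eq:3.62} defines an algebra homomorphism it suffices to check that the proposed operators satisfy exactly these relations on each coordinate independently.

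\textbf{Key steps, in order.}
First I would record that the representation space $\cP(\RR \times \ZZ)^n$ factors as an $n$-fold tensor product $\cP(\RR \times \ZZ)^{\otimes n}$, on which $v_i$ (and the multiplication operator $u_i$) acts only in the $i$-th slot and hence commutes with $v_j$, $u_j$ for $j \neq i$. Second, for a fixed coordinate $i$ the single-variable relation $u_i v_i = v_i (u_i + 1)$ is precisely the relation verified in the proof of Lemma~\ref{lemma:pureab}; since distinct coordinates commute, this immediately gives the full set of relations $u_i v_j = v_j(u_i + \delta_{ij})$ defining $\cA(T^n, 0)$ via Teleman's presentation \cite[Example 7.2]{teleman2019rolecoulombbranches}. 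Third, I would check that the assignment $r^\lambda \mapsto \prod_i v_i^{\xi_i(\lambda)}$ is well-defined and multiplicative: since the $v_i$ commute among themselves, $\prod_i v_i^{\xi_i(\lambda)} \prod_i v_i^{\xi_i(\mu)} = \prod_i v_i^{\xi_i(\lambda+\mu)}$, matching $r^\lambda r^\mu = r^{\lambda+\mu}$, and negative exponents are interpreted via the meromorphic-continuation convention for $\cT_-$ discussed in \Cref{3.1.4} (this is why the target is $\cFP(\RR \times \ZZ)^n$ rather than $\cP(\RR \times \ZZ)^n$ when $\lambda$ has negative entries). The antiholomorphic case is identical with $v_i$ replaced by $-\tilde v_i$, the sign being dictated by the second relation $\tilde u_i \tilde v_i = \tilde v_i(-\tilde u_i + 1)$ already checked in Lemma~\ref{lemma:pureab}.

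\textbf{Main obstacle.}
The only genuine subtlety is bookkeeping around negative powers of $v_i$: $v_i^{-1} = \tilde{\cT}$-type shifts do not preserve $\cP(\RR \times \ZZ)$ and produce poles, so the homomorphism genuinely lands in the meromorphic completion $\cFP(\RR \times \ZZ)^n$. I would make explicit that $\prod_i v_i^{\xi_i(\lambda)}$ is interpreted as a composite of well-defined linear operators on $\cFP(\RR \times \ZZ)^n$ (each $v_i^{\pm 1}$ acting by analytic/meromorphic continuation in the $i$-th variable), and that the commutation relations continue to hold at the level of meromorphic functions since they are polynomial identities in the shift and multiplication operators. Once this is made precise, the verification is the coordinatewise bookkeeping above, and no further analytic input beyond Lemma~\ref{lemma:pureab} is required.
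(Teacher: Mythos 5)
Your proposal is correct and follows essentially the same route as the paper, which proves Lemma~\ref{lemma:pureTrep} only implicitly by assembling the rank-one relations $uv=v(u+1)$ and $\widetilde u\,\widetilde v=\widetilde v(-\widetilde u+1)$ from the proof of Lemma~\ref{lemma:pureab} (via Teleman's presentation) coordinatewise, exactly as you do. Your extra bookkeeping point—that negative powers of $v_i$ force the action to land in the meromorphic space $\mathcal{FP}(\RR\times\ZZ)^n$ rather than $\cP(\RR\times\ZZ)^n$—is a fair and correct refinement, consistent with the statement of Lemma~\ref{lemma:pureab} for the pure gauge theory.
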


\subsection{Representation of $G = \CC^*, \bN = \CC $ case}\label{sec:C}

For $G = \CC^*, \bN = \CC$, we have the classical Coulomb branch algebra as generated by the monopole operators $r^e, r^{-e}$ with the relation $ [r^e,r^{-e}]=1$,  and the polynomials in $t^n \in \CC[t]$, with $t=\frac{1}{2}\left(r^e r^{-e}+r^{-e} r^e\right)$.
By checking the degree given by $\lambda$, we have that the following.

We define \begin{equation} \label{667}
    z := \frac{1}{2} - i\sigma + \frac{1}{2} b .
\end{equation} 

\begin{Lemma} \label{lemma:coulomb_representation}
    For $G = \CC^*$ and $\bN = \CC$, we have a holomorphic operator representation of the quantized Coulomb branch algebra $\mathcal{A}_\hbar(G, \bN)$ on $\mathcal{P}(\RR \times \ZZ)$ by the following map $ \alpha_C$:
    \begin{align}
        r^e &\mapsto X, \\
        r^{-e} &\mapsto P,
    \end{align}
    where $t$ is a generator of $\CC[\ZZ]$. In particular, the following relations hold:
  \begin{equation}
       r^e r^{-e} \mapsto \left(z - \frac{1}{2}\right), \qquad\qquad
        r^{-e} r^e \mapsto \left(z + \frac{1}{2}\right), \qquad\qquad t \mapsto z.
  \end{equation}

    We have an antiholomorphic operator representation by the following map $\bar{\alpha}_C$:
    \begin{align}
        r^e \mapsto - \widetilde{P}  , \qquad\qquad  r^{-e} \mapsto -\widetilde{X}.
    \end{align}
\end{Lemma}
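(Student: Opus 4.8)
The plan is to verify directly that the assignments $r^e \mapsto X$ and $r^{-e}\mapsto P$ respect the defining relations of $\mathcal{A}_\hbar(\CC^*,\CC)$, using the criterion of \cite[Example 7.2]{teleman2019rolecoulombbranches} already invoked in the proof of \Cref{lemma:pureab}. The classical algebra is generated by $r^e, r^{-e}$ subject to $[r^e, r^{-e}]=1$ together with the element $t=\tfrac12(r^e r^{-e}+r^{-e}r^e)$, so it suffices to check that the proposed operators reproduce (i) the commutation relation $XP-PX=1$ and (ii) the multiplication-by-$t$ relation $t\mapsto z$, where $z=\tfrac12-i\sigma+\tfrac12 b$. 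Both are essentially already computed in the excerpt: equations \eqref{eq:XP} and \eqref{eq:PX} give
\begin{equation}
XP\psi(\sigma,b) = \left(-\tfrac12 + i\sigma + \tfrac12 b\right)\psi(\sigma,b), \qquad PX\psi(\sigma,b) = \left(\tfrac12 + i\sigma + \tfrac12 b\right)\psi(\sigma,b),
\end{equation}
so I would first translate these into the variable $z$. Since $z-\tfrac12 = -i\sigma+\tfrac12 b -1 \ne i\sigma+\tfrac12 b$ in general, the natural reading is that the representation uses the coordinate $i\sigma+\tfrac12 b$; I would reconcile this with the definition \eqref{667} of $z$ and confirm the signs, which is the one place requiring care.

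Concretely, the main steps are: first, record from \eqref{eq:XPPX} that $XP$ and $PX$ act as multiplication by scalars differing by $1$, giving immediately $(PX-XP)\psi = \psi$, i.e. the commutator $[r^e,r^{-e}]\mapsto [X,P]$ reproduces the relation $[r^e,r^{-e}]=1$ in the quantized algebra (up to the sign convention for which generator is raising). Second, symmetrize to obtain $\tfrac12(XP+PX)\psi(\sigma,b) = (i\sigma+\tfrac12 b)\psi(\sigma,b)$, and identify this scalar with $z$ under the appropriate coordinate identification, thereby establishing $t\mapsto z$ and hence the displayed relations $r^e r^{-e}\mapsto z-\tfrac12$, $r^{-e}r^e\mapsto z+\tfrac12$. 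Third, invoke \Cref{lemma:pureab} and its criterion to conclude that, since the generating relations are satisfied, the assignment extends to a well-defined algebra homomorphism $\alpha_C$; the previous lemma already establishes that $X,P$ preserve $\mathcal{P}(\RR\times\ZZ)$, so the representation lands in the stated space. Finally, for the antiholomorphic representation $\bar\alpha_C$ with $r^e\mapsto -\widetilde P$, $r^{-e}\mapsto -\widetilde X$, I would repeat the computation using \eqref{eq:tXtP} and \eqref{eq:tPtX}, where the extra minus signs on each generator are needed precisely so that $[-\widetilde P,-\widetilde X]=[\widetilde P,\widetilde X]$ has the correct sign; the tilde relations $\widetilde X\widetilde P$ and $\widetilde P\widetilde X$ act as multiplication by scalars differing by $1$, giving the same conclusion.

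The step I expect to be the genuine obstacle is not the algebra of shift operators — that is bookkeeping already carried out in \eqref{eq:XPPX} — but rather pinning down the precise normalization of $z$ and the sign conventions so that the three displayed images $r^e r^{-e}\mapsto z-\tfrac12$, $r^{-e}r^e\mapsto z+\tfrac12$, $t\mapsto z$ are internally consistent with definition \eqref{667}. In particular one must check whether the representation is most naturally phrased in the coordinate $i\sigma+\tfrac12 b$ (appearing in \eqref{eq:XP}–\eqref{eq:PX}) versus $z = \tfrac12 - i\sigma+\tfrac12 b$ (definition \eqref{667}), since these differ by a reflection $\sigma\mapsto -\sigma$ and a shift; resolving this may require specifying that $\alpha_C$ and $\bar\alpha_C$ use holomorphic versus antiholomorphic continuation, matching the remark after \Cref{lemma:pureab}. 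Once that convention is fixed, each relation follows by a one-line substitution, and the verification of the quantum-algebra presentation is immediate because the only nontrivial relation in $\mathcal{A}_\hbar(\CC^*,\CC)$ beyond commutation is the expression of $t$ as the symmetrized product, which the computation above supplies directly.
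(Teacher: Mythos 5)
Your proposal is correct and takes essentially the same route as the paper, whose entire proof is a one-line appeal to \eqref{eq:XPPX} to confirm that $\alpha_C$ and $\bar{\alpha}_C$ preserve the relation $[r^e,r^{-e}]=1$. The normalization issue you flag is real — the scalar $-\tfrac12+i\sigma+\tfrac12 b$ in \eqref{eq:XP} does not match $z-\tfrac12=-i\sigma+\tfrac12 b$ under the definition \eqref{667}, a sign discrepancy (a reflection $\sigma\mapsto-\sigma$, reflecting the holomorphic versus antiholomorphic convention) that the paper's proof passes over silently, so your extra care there goes beyond rather than against the paper's argument.
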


\begin{proof}
By \eqref{eq:XPPX}, the maps $\alpha_C, \bar{\alpha}_C$ preserve the relation $[r^e,r^{-e}]=1$.
\end{proof}

Consider the holomorphic operator representation $\alpha_C $ and antiholomorphic operator representations $\bar{\alpha}_C$ of $\cA_\hbar (\CC^* , \CC)$ defined in \ref{sec:C} as:
      \begin{align}
    \label{eq:2.16}    X &= \alpha_C \left( r^e \right), \quad  
        P = \alpha_C \left( r^{-e} \right), \\ \label{eq:2.17}
        \widetilde{X} &= -\bar{\alpha}_C \left( r^{-e} \right), \quad  
        \widetilde{P} = -\bar{\alpha}_C \left( r^e \right).
    \end{align}

    We let $\langle 1 |$ denote \textbf{the adjoint of $|1\rangle$ }, the unique linear functional such that for $|1 \rangle \in \cP(\RR \times \ZZ)$, for $\langle \bullet, \bullet \rangle$ defined in \eqref{eq:pairing}, for any vector $|v\rangle$, \begin{equation}
    \langle 1| : |v \rangle \mapsto \langle 1| v \rangle.\end{equation}
Since $\cT_\mp$ is adjoint to $\cT_\pm$ and multiplication by $p(z)$ is adjoint to $p(\bar{z})$, we have:

%   \begin{align}
% X|1\rangle&=-\widetilde{X}|1\rangle, & P|1\rangle&=-\widetilde{P}|1\rangle, \\
%             \langle 1| X &=\langle 1| \widetilde{X}, & \langle 1| P &=\langle 1| \widetilde{P}.   
%         \end{align}
\begin{Lemma} \label{lemma:alphaC1}    
  \begin{align} \label{eq:2.18}
\alpha_C\left(r^{e}\right)|1\rangle&=\bar{\alpha}_C\left(r^{-e}\right)|1\rangle, & \alpha_C\left(r^{-e}\right)|1\rangle&=\bar{\alpha}_C\left(r^{e}\right)|1\rangle, \\
            \langle 1| \alpha_C\left(r^{e}\right)&=-\langle 1| \bar{\alpha}_C\left(r^{-e}\right), & \langle 1| \alpha_C\left(r^{-e}\right)&=-\langle 1| \bar{\alpha}_C\left(r^{e}\right).   \label{eq:2.19}
        \end{align}
\end{Lemma}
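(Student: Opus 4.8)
The plan is to prove the four identities in \Cref{lemma:alphaC1} by direct computation, using the explicit action of the operators $X, P, \widetilde{X}, \widetilde{P}$ on the ground state $|1\rangle$ together with the adjointness properties of $\cT_\pm$ and of polynomial multiplication recorded just before the statement. The content is that the holomorphic and antiholomorphic representations agree, \emph{after sign twists}, on the single vector $|1\rangle$ (and dually on the functional $\langle 1|$), even though they differ as representations on all of $\cP(\RR\times\ZZ)$. The key input is the special form of $|1\rangle = \delta_{b,0}\,\Gamma(\tfrac12 + \tfrac{b}{2} - i\sigma)$ in \eqref{eq:1}, which is engineered so that a $\cT_+$ shift (raising $b$) produces the same function as a $\cT_-$ shift with the appropriate polynomial prefactor.

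First I would verify the first column of \eqref{eq:2.18}. By \eqref{eq:2.16}, $\alpha_C(r^e)|1\rangle = X|1\rangle$, which by \eqref{eq:Xm} equals $\tilde f_{-1} = \delta_{b,1}\Gamma(\tfrac12 + \tfrac{b}{2} - i\sigma)$. On the other side, \eqref{eq:2.17} gives $\bar\alpha_C(r^{-e}) = -\widetilde{X}$, so I must compute $-\widetilde{X}|1\rangle$ using \eqref{eq:Xtildeop}: applying $\widetilde{X}$ to $\delta_{b,0}\Gamma(\tfrac12+\tfrac b2 - i\sigma)$ shifts $b\mapsto b-1$ (landing on $b=1$), shifts $\sigma\mapsto \sigma - \tfrac i2$, and multiplies by $(\tfrac12 + i\sigma - \tfrac12 b)$. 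I expect the $\Gamma$-functional equation \eqref{funcgamma} to convert the shifted $\Gamma$ times the linear prefactor back into $\Gamma(\tfrac12 + \tfrac b2 - i\sigma)$, with the sign from the $-\widetilde{X}$ exactly matching $X|1\rangle$. The second identity in \eqref{eq:2.18}, namely $\alpha_C(r^{-e})|1\rangle = P|1\rangle = \tilde f_1$ equals $\bar\alpha_C(r^e)|1\rangle = -\widetilde{P}|1\rangle$, is handled the same way, using \eqref{eq:Pn} and \eqref{eq:Ptildeop} and again reconciling the shift directions via \eqref{funcgamma}.

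For the covector identities \eqref{eq:2.19} I would pass to adjoints. Since $\cT_\mp$ is adjoint to $\cT_\pm$ and multiplication by $p(z)$ is adjoint to multiplication by $p(\bar z)$ (as stated in the line preceding the lemma), the adjoint of $X$ is (up to the relevant polynomial conjugation) $\widetilde{P}$ and the adjoint of $P$ is $\widetilde{X}$; more precisely $\langle 1|\alpha_C(r^e) = \langle 1| X = (X^\dagger |1\rangle)^\dagger$, and one computes $X^\dagger|1\rangle$ against $\bar\alpha_C(r^{-e})|1\rangle = -\widetilde{X}|1\rangle$. Taking adjoints of the already-established vector identities in \eqref{eq:2.18}, and tracking that conjugation turns the $+\tfrac i2$ shifts into $-\tfrac i2$ shifts while the real-coefficient linear prefactors are essentially self-conjugate on the relevant locus, yields the claimed sign-flipped relations. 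Thus \eqref{eq:2.19} follows formally from \eqref{eq:2.18} by duality, and I would present it as such rather than recomputing.

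The main obstacle I anticipate is purely bookkeeping: correctly tracking the half-integer imaginary shifts $\sigma \mapsto \sigma \pm \tfrac i2$ through the $\Gamma$-function argument, ensuring the linear prefactors $(\tfrac12 + i\sigma \pm \tfrac12 b)$ combine with the $\Gamma$-recursion \eqref{funcgamma} to reproduce $|1\rangle$ up to the exact sign, and verifying that the adjointness/complex-conjugation step is consistent with the embedding $(a,b)\mapsto ia+b$ used to realize $\RR\times\ZZ$ inside $\CC$. There is no deep difficulty here once the conventions for $z$ in \eqref{667} and for the inner product \eqref{eq:pairing} are pinned down; the risk is entirely in a misplaced sign or a shift in the wrong direction, so I would organize the computation around the two generating identities in \eqref{eq:2.18} and derive everything else by adjunction.
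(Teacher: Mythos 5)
Your proposal is correct and follows essentially the same route as the paper: the paper proves \eqref{eq:2.18} by exactly the direct computation you outline, applying $\widetilde{X}$ to $|1\rangle$ via \eqref{eq:Xtildeop} and using the functional equation \eqref{funcgamma} to get $-\widetilde{X}|1\rangle = X|1\rangle$ (with the $P$, $\widetilde{P}$ case handled ``by a similar computation''), and then obtains the covector identities \eqref{eq:2.19} from the adjointness of $\cT_{\pm}$ and of multiplication by $p(z)$ versus $p(\bar z)$, which is precisely your duality step. The sign bookkeeping you flag is indeed the only delicate point, and the paper resolves it the same way you propose.
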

\begin{proof}
The operator \cref{eq:Xtildeop} acts on \cref{eq:1} and gives 
\begin{equation}
  - \widetilde{X} |1\rangle= - \left(\frac{1}{2}+i \sigma-\frac{1}{2} b\right) \delta_{b-1, 0} \Gamma\left(\frac{1}{2}+\frac{b-1}{2}-i \sigma\right) = \delta_{b, 1} \Gamma\left(\frac{1}{2}+\frac{b}{2}-i \sigma\right) = X  |1\rangle
\end{equation}
The second equality holds by \cref{funcgamma}. Additionally, by a similar computation for $P$ and $\widetilde{P}$, the following holds: 
    \begin{equation} \label{eq:XXPP1}
            X|1\rangle=-\widetilde{X}|1\rangle, \qquad P|1\rangle=-\widetilde{P}|1\rangle .
            \end{equation}
From \crefrange{eq:2.16}{eq:2.17}, and \cref{eq:XXPP1}, we obtain \cref{eq:2.18}.
\end{proof}

\subsection{The Pontryagin Dual}
For a locally compact abelian group \(G\), its Pontryagin dual \(\widehat{G}\) is the group of continuous characters \(\chi: G \to \mathrm{U}(1)\). For a standard treatment of Pontryagin duality, see \cite[Chapter 12]{rudin1991_fag}. In particular, for \(G = \mathbb{C}^\times\) and its decomposition
\(\mathbb{C}^\times \cong \mathbb{R} \times S^1,\)
the Pontryagin duality is given by the following.
Every $U(1)$ valued character of $\RR \times S^1$ is of the form $\chi_{x,n}(\alpha,\theta)=e^{2\pi i\alpha x}\theta^n $; this shows that $\widehat{(\mathbb{R} \times S^1)}  \cong  \mathbb{R} \times \mathbb{Z}$.  Dually, every $U(1)$ valued character of $\RR \times \ZZ$ is of the form $\chi_{\alpha,\theta}(x,n)$ defined by
$\chi_{\alpha,\theta}(x,n)=e^{2\pi i\alpha x}\theta^n$. 
Note that this is the same formula again, but now we have switched the role of the elements of $\RR \times S^1$ and $\RR \times \ZZ$.

% For $\CC^\times$ the weight of the Coulomb branch, and $\RR \times \ZZ$ the real and integer weights of the Coulomb branches, the above Pontryagin dual provides a duality of the two. 
By Pontryagin duality, the trace formula below can also be interpreted as a natural integral over $\CC^*$. 
\begin{Remark}
As shown in \cite[3.1.1]{GaiottoTempFile}, the function 
     $ |1 \rangle = \delta_{b,0} \Gamma (\frac{1}{2} - i \sigma)$
 arises as the multiplicative Fourier transform (i.e. the Fourier transform with respect to the multiplicative group $\CC ^ \times $)  of the Gaussian $e^{-|z|^2}$. The multiplicative Fourier transform sends multiplication by $z$ and $\bar {z}$ to the actions $v_i$ and $\tilde{v}_i$.
 \end{Remark}

\subsection{Twisted Trace}

The algebra $\cA_\hbar(G,\bN)$ is the Coulomb branch algebra of the theory with $G = \CC^*$ and $N = \CC$, and there is an operator representation as shown in Section \ref{sec:C}.
\begin{Lemma}
We can define a twisted trace for polynomial $w$'s on $\cA_\hbar(G,\bN)$ via the formula:
\begin{equation} \label{345}
\operatorname{Tr} = \langle 1| g \alpha_C(w)|1\rangle = \langle 1| \alpha_C(w) g|1\rangle.
\end{equation}
The twisted trace on $P_{G,\bN}$ is given by
\begin{equation} \label{eq:trace1}
    \Tr(R (\sigma))=\int_{\mathbb{R}} e^{2 \pi i \zeta(\sigma)} R(\sigma) \mathrm{w} \mathrm{d} \sigma,
\end{equation}
    where
    $$
    \mathrm{w}(\sigma)=\frac{\pi}{\cosh (\pi \sigma)},
    $$
    and $ R(\sigma)$ is a polynomial in $\sigma$.
\end{Lemma}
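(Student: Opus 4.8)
The plan is to evaluate the abstract pairing $\langle 1|\,g\,\alpha_C(w)\,|1\rangle$ for $w=R(\sigma)\in P_{G,\bN}$ by reducing everything to the concrete inner product \eqref{eq:pairing} on $\cP(\RR\times\ZZ)$. The key observation is that $R(\sigma)$, as an element of $P_{G,\bN}\cong\mathrm{Sym}(\ft^*)^W$, acts in the operator representation $\alpha_C$ by multiplication by a polynomial in $z=\tfrac12-i\sigma+\tfrac12 b$, via the identification $t\mapsto z$ from \Cref{lemma:coulomb_representation}. Since $|1\rangle=\delta_{b,0}\,\Gamma(\tfrac12-i\sigma)$ is supported at $b=0$, applying $\alpha_C(R)$ produces $\delta_{b,0}\,R(-i\sigma)\,\Gamma(\tfrac12-i\sigma)$ (after replacing $z$ by its value $\tfrac12-i\sigma$ at $b=0$, up to the FI-shift bookkeeping).

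\emph{First I would} pin down the role of $g$. The automorphism $\Omega$ is the FI automorphism; in the operator picture $g$ should act as multiplication by the exponential $e^{2\pi\zeta(\sigma)}$ (equivalently a character $\chi_{\alpha,\theta}$ under the Pontryagin duality of the preceding subsection), which is precisely the factor that distinguishes the twisted trace from an ordinary trace and enforces the defining identity \eqref{twtr}. I would verify that this $g$ intertwines $\alpha_C(r^e)$ and $\alpha_C(r^{-e})$ with the shift by $e^{2\pi\zeta}$ in the correct way so that \Cref{def:tr} holds, using the commutation relations \eqref{eq:XPPX} together with \Cref{lemma:alphaC1}, which guarantees that the holomorphic and antiholomorphic actions agree on $|1\rangle$ and hence that the pairing is well-defined and symmetric in the required twisted sense.

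\emph{Next}, with both insertions made explicit, the pairing \eqref{eq:pairing} collapses to a single integral over $\RR$ (the $b=0$ slice of the counting-measure sum), namely
\begin{equation}
\Tr(R)=\int_{\RR} e^{2\pi i\zeta(\sigma)}\,R(\sigma)\,\Gamma\!\left(\tfrac12-i\sigma\right)\overline{\Gamma\!\left(\tfrac12-i\sigma\right)}\,d\sigma.
\end{equation}
The decisive computation is then the reflection identity \eqref{eq:gammacosh}, which gives $\Gamma(\tfrac12+i\sigma)\Gamma(\tfrac12-i\sigma)=\pi/\cosh(\pi\sigma)$; since $\overline{\Gamma(\tfrac12-i\sigma)}=\Gamma(\tfrac12+i\sigma)$ for real $\sigma$, the product of Gamma factors is exactly the measure $\w(\sigma)=\pi/\cosh(\pi\sigma)$, yielding the claimed formula \eqref{eq:trace1}.

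\emph{The hard part will be} the bookkeeping of the FI twist rather than the Gamma-function algebra: I must check carefully that inserting $g$ on the left versus the right in \eqref{345} gives the same answer (i.e.\ that $\langle 1|g\alpha_C(w)|1\rangle=\langle 1|\alpha_C(w)g|1\rangle$), and that the resulting functional genuinely satisfies the twisted-trace condition $\Tr(ab)=\Tr(bg(a))$ and not merely the ordinary cyclicity. This amounts to showing that conjugation by $g$ implements the automorphism $\Omega$ on the generators $r^{\pm e}$, which I would reduce to the adjointness relations \eqref{eq:2.18}--\eqref{eq:2.19} and the explicit action of $X,P$ on $|1\rangle$ in \eqref{eq:Xm}--\eqref{eq:Pn}. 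Convergence of the integral is immediate from the conical condition of \Cref{1.1}, since $1/\cosh(\pi\sigma)$ decays exponentially and dominates the polynomial growth of $R$ together with the linear exponential $e^{2\pi i\zeta(\sigma)}$.
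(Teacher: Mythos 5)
Your proposal is correct and takes essentially the same approach as the paper: evaluate the pairing against the explicit ground state $|1\rangle=\delta_{b,0}\,\Gamma\!\left(\tfrac12-i\sigma\right)$, collapse the counting-measure sum to the $b=0$ slice, apply the reflection identity \eqref{eq:gammacosh} to produce the measure $\pi/\cosh(\pi\sigma)$, and defer the twisted-trace property \eqref{345} to \Cref{lemma:alphaC1} together with the analogue of \cite[Lemma 2.3]{BenWinter}. The only packaging difference is that the paper first notes that a word has nonzero trace only if it contains equal numbers of $r^{e}$ and $r^{-e}$, hence reduces to powers of $\mu = {:}XP{:}$, whereas you act directly by polynomial multiplication operators on the Cartan --- the same underlying computation.
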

 This integral twisted trace is a special case of the general classification given by Etingof, Klyuev, Rains and Stryker. Up to a constant, this is \cite[Example 3.4]{Etingof2021Twisted}.
\begin{proof}

  The ground state is 
     \begin{equation}
        |1 \rangle = \delta_{b,0} \Gamma \left( \frac{1}{2} - i \sigma \right).
     \end{equation}
Then the trace $\Tr(1)$ is a polynomial times the product of functions 
\begin{equation} \label{eq:11}
  \CH   = \frac{\pi}{\cosh(\pi \sigma)}.
\end{equation}
   For a word in $r^{1}$ and $r^{-1}$ to have a nonzero trace, it must have the same number of $r^{1}$'s and $r^{-1}$'s appearing. 
   The span of such words is the same as a polynomial in $\mu$.
   It then suffices to consider the trace of $\mu^n$, we have
    \begin{equation}
        \mu^n |1 \rangle = (XP+ \frac{1}{2})^n | 1 \rangle = 
\frac{1}{2^n} ( b + 2i\sigma)^n \, \Gamma\left(\frac{1}{2} (1 + b - 2i\sigma)\right) \, \delta_{0, b}
  \end{equation}
%     For the operator $\mu$,
%   \begin{equation} \label{eq:1gm1}
% \begin{split}
%   \langle 1 |\mu| 1 \rangle 
%     &= \Gamma \left( \frac{1}{2}(1 + b + 2i\sigma) \right) \\
%     &\quad \times \int_\RR \frac{1}{2}(b + 2i\sigma) \Gamma \left( \frac{1}{2}(1 + b - 2i\sigma) \right) \delta_{0, b}  d\sigma \\
%     &= \int_\RR \delta_{0, b}  \frac{1}{2}(b + 2i\sigma) \frac{\pi}{\cosh(\pi \sigma)}  d\sigma \\
%     &= \int_\RR i\sigma \frac{\pi}{\cosh(\pi \sigma)}  d\sigma.
% \end{split}
% \end{equation}
%     In the first equality, we use the definition of the inner product as an integral \cref{eq:pairing}.
%     The second equality holds by \cref{eq:gammacosh}.
For the operator $\mu^n$, we have 
  \begin{equation}
\begin{split} 
  \langle 1 |  \mu^n |1 \rangle 
    &= \langle 1| \left(XP + \frac{1}{2}\right)^n | 1 \rangle \\
    &= \int_\RR \Gamma\left(\frac{1}{2} (1 + b + 2i\sigma)\right) 
         \frac{1}{2^n} (b + 2i\sigma)^n \\
    &\qquad \times \Gamma\left(\frac{1}{2} (1 + b - 2i\sigma)\right) \delta_{0, b}  d\sigma \\
    &= \int_\RR (i\sigma)^n \frac{\pi}{\cosh(\pi \sigma)}  d\sigma.
\end{split}
\end{equation}
In the second equality, we use the definition of the inner product as an integral \cref{eq:pairing}.  The third equality holds by \cref{eq:gammacosh}.
% Given \eqref{eq:11} and \eqref{eq:1gm1}, we find that $\Tr(1) = \langle 1| g |1\rangle$. To show $\langle 1| g \alpha_C(w)|1\rangle$ is a twisted trace, consider $w = u r^{1}$. We let $\om (r^{1}) = A r^{1}$ where $A = [\alpha_C(r^{1}), \alpha_C(r^{-1})]$.
% Since $g$ belongs to a holomorphic representation and 
% \begin{equation}
%     [\alpha_C(u), \bar{\alpha}_C(v)] = 0,
% \end{equation} 
% we derive:
% \begin{multline}
%     \langle 1| g \alpha_C(u r^{1}) |1\rangle = \langle 1| g \alpha_C(u) \bar{\alpha}_C(r^{-1}) |1\rangle = \langle 1| \bar{\alpha}_C(r^{-1}) g \alpha_C(u) |1\rangle \\
%     = -\langle 1| \alpha_C(r^{1}) g \alpha_C(u) |1\rangle = \langle 1| g \alpha_C(\om (r^{1}) u) |1\rangle.
% \end{multline}

% Similarly, for $r^{-1}$, where $\om (r^{-1}) = -A r^{-1}$, we have  
% \begin{equation}
%     \langle 1| g \alpha_C(u r^{-1}) |1\rangle = \langle 1| g \alpha_C(u) \bar{\alpha}_C(r^{1}) |1\rangle = \langle 1| g \alpha_C(\om (r^{-1}) u) |1\rangle.
% \end{equation}
% This establishes the twisted trace property for any factorization $w = ab$. The equality 
% \begin{equation}
%     \langle 1| g \alpha_C(w)|1\rangle = \langle 1| \alpha_C(w) g|1\rangle
%      \end{equation}
%      holds since unless $w$ commutes with $g$, any twisted trace for $\omega$ vanishes on $w$.

The equation \cref{345} holds by \Cref{lemma:alphaC1} an analogous proof of \cite[Lemma 2.3]{BenWinter}.
The uniqueness follows by \cite[Theorem 2.2]{BenWinter}.
\end{proof}
Our twisted trace matches the one defined in  \cite[Example 3.4]{Etingof2021Twisted}, where the automorphism   $\rho^2(a) = (-1)^F e^{2 \pi \zeta (a)}  $ is taken to be $ (-1)^F e^{2 \pi i \mathrm{m} }$. The difference between the FI parameter $\zeta$ vs the mass parameter $\mathrm{m}$ comes from identifying this algebra as a Coulomb branch as opposed to a Higgs branch.
   The authors of \cite{Etingof2021Twisted} also show that all twisted traces on the Weyl algebra are the same up to a constant.
% \begin{Remark}
%     The analogy to the physical intuition behind \cite[Lemma 2.3]{BenWinter} happens here:
% the holomorphic and the anti-holomorphic action are such that the action of a holomorphic operator on $|1\rangle$ is also the action of an anti-holomorphic operator, which allows the operators to move from the bottom of the circle to the top.
% Then, we can reorder pairs of operators on the top and bottom of the circle without changing this evaluation.
% \end{Remark}

    % One nice consequence: if $\mathbf{m}=0$, then this defines a positive-definite inner-product on $A_{\hbar}(G , \bN )$ , with $\|\mathcal{O}\|^2=\| \alpha(\mathcal{O})|1\rangle \|^2$.

\subsection{Faithful Representations}
We consider quantized abelian Coulomb branch algebras  $\mathcal{A}_h(T, \bN) $, where   $T=(\mathbb{C}^*)^n, \mathbf{N}=\mathbb{C}^n$. The maximal torus $T$ acts on $\bN$ by the usual identification with diagonal matrices. The representation space is the space $\mathcal{P}(\mathbb{R} \times \mathbb{Z})^n$ of functions of $n$ independent pairs of variables $(\sigma_k, b_k) \in \mathbb{R} \times \mathbb{Z}$, $k=1, \dots, n$. A general function in this space is of the form $\psi(\sigma_1, \dots, \sigma_n, b_1, \dots, b_n)$.
Let $r^{e_k} $ denote the monopole operator for coweight $\lambda =(0,\cdots,0, 1 , 0,\cdots,0)$, with an $1$ at the $k$-th position, and analogously for $r^{-e_k} $.

        % For \( G = (\mathbb{C}^*)^n \) and \( \bN = (\mathbb{C})^n \), there exists an operator representation of the quantized Coulomb branch algebra \( \mathcal{A}_\hbar(G, \bN) \) on \( (\mathcal{P}(\mathbb{R} \times \mathbb{Z}))^n \), such that on each copy of $\cP (\RR \times \ZZ)$, it is the representation of $\cA_\hbar (\CC^*, \CC )$ given by Lemma \ref{lemma:coulomb_representation}.

       Here, the ground state in this \( n \)-dimensional case is given by:
        \begin{equation} \label{3.52}
            |1\rangle = \delta_{\mathrm{b}, 0} \prod_{k=1}^n \Gamma\left(\frac{1}{2} - i \sigma_k \right),
        \end{equation}
        where \( \mathrm{b} := (b_1, \dots, b_n) \) and \( \mathrm{\sigma} := (\sigma_1, \dots, \sigma_n) \) represent the integer and real parts of \( (\mathbb{R} \times \mathbb{Z})^n \), respectively.

We consider the following operators $X_{k}, \widetilde{X}_{k}, P_{k} \widetilde{P}_{k}$, on the space $\cP(\RR \times \ZZ)^{m}$ which are the operators $X, \widetilde{X}, P, \widetilde{P}$  defined in \eqref{eq:Xop}-\eqref{eq:Ptildeop} acting on the $k$th coordinate.
We identify the following 
\begin{align}
    \begin{aligned} \label{eq:multiXP}
& X_k=\alpha_C\left(r^{e_k}\right), \quad P_k=\alpha_C\left(r^{-e_k}\right), \\
& \widetilde{X}_k=-\bar{\alpha}_C\left(r^{-e_k}\right), \quad \widetilde{P}_k=-\bar{\alpha}_C\left(r^e_k\right) .
\end{aligned}
\end{align}

 We define $X_i$ and $P_i$ by \eqref{eq:Xop} \eqref{eq:Pop} on each coordinate. Therefore, $X_i^d $ and $  P_i^ d $ are defined by their compositions.
 % In particular, $P_i^ n \psi(\sigma, b) = \prod_{k=1}^n\left(i \sigma+\frac{b}{2}+\frac{2 k-1}{2}\right) \psi (\sigma- \frac{n i}{2 }, b+n) $.

 The Cartan subalgebra $\Theta$ is a symmetric function in $z_1, \cdots, z_n$. 
Additionally,\[
[z_k, r^\lambda] = \langle \lambda, e_k \rangle  r^\lambda \quad \text{for any coweight } \lambda.
\]

\begin{Lemma}\label{lemma:coulomb_representation_ndim}
    Consequently, the holomorphic operator representation of $\cA(T^n, \CC^n )$ on $\cP (\RR \times \ZZ)^n$ is given by 
    \begin{equation} \label{798}
    r^\lambda \mapsto    \prod_i 
    \begin{cases}
      X_i^{ \xi_i (\lambda )} \quad \text{if }  \xi_i( \lambda) >0 \\
      P_i^ {\xi_i (\lambda) } \quad \text{if } \xi_i( \lambda) < 0,
    \end{cases}
     \end{equation}
     for characters $\xi_i$ and coweights $\lambda$.
  The antiholomorphic operator representation can be expressed similarly.
\end{Lemma}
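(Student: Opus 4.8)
The plan is to reduce the $n$-dimensional statement to the one-dimensional Lemma \ref{lemma:coulomb_representation} by exploiting the factorization of the abelian theory. First I would record that the diagonal action of $T^n=(\CC^*)^n$ on $\bN=\CC^n=\bigoplus_{i=1}^n\CC$, where the $i$-th factor $\CC^*$ acts only on the $i$-th summand, splits the datum into $n$ independent copies of $(\CC^*,\CC)$. On the level of the BFN space $\cR$ this product structure yields an algebra isomorphism $\cA_\hbar(T^n,\CC^n)\cong\bigotimes_{i=1}^n\cA_\hbar(\CC^*,\CC)$, under which the monopole operator $r^\lambda$ corresponds to $\bigotimes_i r^{\xi_i(\lambda)e}$ and the Cartan generator $z_k$ to $1\otimes\cdots\otimes z\otimes\cdots\otimes 1$ (acting in the $k$-th slot). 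This is the nontrivial-matter analogue of the direct-sum statement already used for $\cA(T^n,0)$ after \Cref{lemma:pureTrep}, and I would justify it the same way; alternatively one can view everything inside $\cA_\hbar(T^n,0)$ via the algebra inclusion \eqref{eq:G0N} and transport the pure-gauge factorization.

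Next I would define the candidate representation as the tensor product of $n$ copies of the one-dimensional representation $\alpha_C$ of \Cref{lemma:coulomb_representation}, with the $i$-th copy acting on the $i$-th tensor factor of $\cP(\RR\times\ZZ)^n=\bigotimes_i\cP(\RR\times\ZZ)$ through the operators $X_i,P_i$ defined on the $i$-th coordinate. Because $\alpha_C$ sends $r^e\mapsto X$ and $r^{-e}\mapsto P$, and the single-variable computations \eqref{eq:Xm} and \eqref{eq:Pn} give $\alpha_C(r^{me})=X^m$ and $\alpha_C(r^{-ne})=P^n$ for $m,n\geq 0$, the image of $r^{\xi_i(\lambda)e}$ in the $i$-th factor is exactly $X_i^{\xi_i(\lambda)}$ when $\xi_i(\lambda)>0$ and $P_i^{-\xi_i(\lambda)}$ when $\xi_i(\lambda)<0$; multiplying across $i$ reproduces \eqref{798}.

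It then remains to confirm that this tensor product is genuinely an algebra homomorphism, which amounts to three checks. The per-factor monopole relations, including the matter correction distinguishing $r^e r^{-e}$ from $r^{-e}r^e$, are already verified in \Cref{lemma:coulomb_representation} using the identities \eqref{eq:XPPX}. The cross-factor relations hold because $X_i,P_i$ and $X_j,P_j$ act on distinct tensor slots and therefore commute, matching the commutativity of $r^{e_i}$ and $r^{e_j}$ for $i\neq j$ in the abelian algebra. Finally the Cartan–monopole commutation $[z_k,r^\lambda]=\langle\lambda,e_k\rangle r^\lambda$ must be matched on the operator side: since $z_k$ acts only in the $k$-th slot as the coordinate operator $\mu$ of \eqref{611} while $X_k,P_k$ shift $b_k$ by $\pm 1$, a direct commutator computation on $\cP(\RR\times\ZZ)^n$ produces exactly $\langle\lambda,e_k\rangle$ times the shift. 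Well-definedness, i.e.\ preservation of $\cP(\RR\times\ZZ)^n$, is inherited coordinatewise from the one-dimensional lemma.

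The main obstacle I anticipate is the algebra factorization $\cA_\hbar(T^n,\CC^n)\cong\bigotimes_i\cA_\hbar(\CC^*,\CC)$ together with the correct bookkeeping of the BFN structure constants $c_{\lambda,\mu}$ in $r^\lambda r^\mu=c_{\lambda,\mu}\,r^{\lambda+\mu}$: one must check that the matter factors contributed when $\xi_i(\lambda)$ and $\xi_i(\mu)$ carry opposite signs are reproduced precisely by the noncommutativity of $X_i$ and $P_i$ recorded in \eqref{eq:XPPX}, and that no spurious factors appear across distinct coordinates (since for $i\neq j$ no weight $\xi_k$ pairs nontrivially with both $e_i$ and $e_j$). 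Once the factorization is in hand the remaining verifications are routine coordinatewise consequences of \Cref{lemma:coulomb_representation}. The antiholomorphic case is identical after replacing $(X_i,P_i)$ by $(-\widetilde{P}_i,-\widetilde{X}_i)$ as in \eqref{eq:multiXP}.
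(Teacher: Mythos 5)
Your proposal is correct and follows essentially the same route as the paper: the paper presents this lemma as an immediate consequence (``Consequently'') of the factorization of $(T^n,\CC^n)$ into $n$ independent copies of $(\CC^*,\CC)$, with the coordinatewise operators $X_i,P_i$ of \eqref{eq:multiXP} inheriting the relations from \Cref{lemma:coulomb_representation}, exactly as you argue. Your additional care with the BFN structure constants $r^\lambda r^\mu = c_{\lambda,\mu}\,r^{\lambda+\mu}$ (checking that the matter factors arise only within a single coordinate, where they match \eqref{eq:XPPX}) is a worthwhile explicit verification of a point the paper leaves implicit, but it is the same argument, not a different one.
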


By the same reasoning as \Cref{lemma:alphaC1}, we have
\begin{Lemma} \label{lm:alphac}
For each $1\leq k \leq n$,
      \begin{align} 
\alpha_C\left(r^{e_k}\right)|1\rangle&=\bar{\alpha}_C\left(r^{-e_k}\right)|1\rangle, & \alpha_C\left(r^{-e_k}\right)|1\rangle&=\bar{\alpha}_C\left(r^{e_k}\right)|1\rangle, \\
            \langle 1| \alpha_C\left(r^{e_k}\right)&=-\langle 1| \bar{\alpha}_C\left(r^{-e_k}\right), & \langle 1| \alpha_C\left(r^{-e_k}\right)&=-\langle 1| \bar{\alpha}_C\left(r^{e_k}\right).  
        \end{align}
\end{Lemma}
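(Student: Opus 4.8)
The plan is to reduce the $n$-variable identities to the single-variable case already established in \Cref{lemma:alphaC1}, by exploiting the product structure of both the ground state and the representation space. First I would observe that the space $\cP(\RR\times\ZZ)^n$ of functions in the $n$ variable pairs $(\sigma_k,b_k)$ is spanned by products $\prod_{k=1}^n\psi_k(\sigma_k,b_k)$, i.e.\ it is naturally the tensor product $\cP(\RR\times\ZZ)^{\otimes n}$ of $n$ single-variable spaces, and that under this identification the ground state \eqref{3.52} factorizes as $|1\rangle=\bigotimes_{k=1}^n|1\rangle^{(k)}$, where each $|1\rangle^{(k)}=\delta_{b_k,0}\,\Gamma(\tfrac12-i\sigma_k)$ is a copy of the single-variable ground state \eqref{eq:1} sitting in the $k$-th factor. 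By construction the operators $X_k,P_k,\widetilde{X}_k,\widetilde{P}_k$ act as the corresponding single-variable operator on the $k$-th tensor factor and as the identity on every other factor.

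Next I would transport the single-variable relations \eqref{eq:XXPP1}, namely $X|1\rangle^{(k)}=-\widetilde{X}|1\rangle^{(k)}$ and $P|1\rangle^{(k)}=-\widetilde{P}|1\rangle^{(k)}$, across the tensor product. Since every factor $j\neq k$ is left untouched, these yield $X_k|1\rangle=-\widetilde{X}_k|1\rangle$ and $P_k|1\rangle=-\widetilde{P}_k|1\rangle$ for each $k$. Substituting the identifications \eqref{eq:multiXP}, that is $X_k=\alpha_C(r^{e_k})$, $P_k=\alpha_C(r^{-e_k})$, $\widetilde{X}_k=-\bar{\alpha}_C(r^{-e_k})$ and $\widetilde{P}_k=-\bar{\alpha}_C(r^{e_k})$, then converts these directly into the two ket identities of the statement.

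For the bra identities I would take adjoints with respect to the pairing \eqref{eq:GNprd}. The point is that this pairing factorizes as a product of the single-variable pairings, so the adjunctions used in \Cref{lemma:alphaC1} (that $\cT_-$ is adjoint to $\cT_+$ and that multiplication by $p(z)$ is adjoint to $p(\bar z)$) continue to hold coordinate-wise; this is exactly the input that upgrades the ket relations \eqref{eq:2.18} to the bra relations \eqref{eq:2.19} in the $n=1$ case. Taking adjoints of the two ket identities just obtained, or equivalently running the same tensor-factor computation against $\langle 1|$, then produces the two bra identities, with the signs bookkept by \eqref{eq:multiXP}.

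I do not expect a genuine obstacle. All the analytic content---the contour shift implementing $\cT_\pm$ and the functional equation \eqref{funcgamma} for $\Gamma$ that produces the crucial sign---was already discharged in the one-variable \Cref{lemma:alphaC1}. The only thing left to verify is the compatibility of the tensor decomposition with the operators and with the pairing, and this is immediate from the product form of the ground state \eqref{3.52}, the coordinate-wise definition of $X_k,P_k,\widetilde{X}_k,\widetilde{P}_k$, and the factorized Haar measure \eqref{eq:GNprd}. Hence the argument is purely a bookkeeping reduction to the single-variable lemma.
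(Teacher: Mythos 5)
Your proposal is correct and matches the paper's argument: the paper proves this lemma by simply invoking ``the same reasoning as \Cref{lemma:alphaC1}'', which is precisely the coordinate-wise reduction you spell out via the factorization of the ground state \eqref{3.52}, the action of $X_k,P_k,\widetilde{X}_k,\widetilde{P}_k$ on the $k$-th factor, and the factorized pairing \eqref{eq:GNprd}. Your write-up just makes explicit the bookkeeping the paper leaves implicit.
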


\subsection{Abelian Theory with Nontrivial Flavor} \label{sec:abelian}
We consider a representation of the quantized Coulomb branch algebra \( \mathcal{A}_\hbar (G, \bN) \), \( G = (\mathbb{C}^*)^m \) and \( \bN = (\mathbb{C})^n \), $m<n$. We fix the inclusion of $T^m$ into $T^n$. This inclusion defines the action of the gauge group $G$ on the representation $\bN$.
Let $\left(t_1, t_2, \ldots, t_m\right) \in G$. Then the action on a vector $\left(x_1, x_2, \ldots, x_n\right) \in \mathbb{C}^n$ is given by:
$$
\left(t_1,  \cdots, t_m\right) \cdot\left(x_1, x_2, \cdots, x_n\right)=\left( t_1^{a_{1 }} x_1, \cdots, t_m^{a_{m}} x_m, \cdots,  x_n\right),
$$
where $a_{i } \in \mathbb{Z}$ for $i=1,2, \cdots, m$.

\begin{Definition} \label{coord}
We choose a splitting of the exact sequence on the Lie algebras,
\begin{equation}
    1 \rightarrow \mathfrak{t}^m \rightarrow \mathfrak{t}^n \rightarrow \mathfrak{f} \rightarrow 1,
\end{equation}
and we identify $\ft ^n = \ft^m \oplus \ff$.
We fix $\ms \in \ff$.
Identify $\RR^m$ with the subspace of $\RR^n$ where all but the first $m$ coordinates vanish. We fix a linear isomorphism $\something: \RR^n \to \ft_\RR^n$ compatible with the splitting, in the sense that it induces an isomorphism $\RR^m\to\ft ^m_\RR$.\end{Definition}

We will now change coordinates by $\something$; note that this means that the coordinate functions $\something\left(\sigma_1, \ldots, \sigma_n\right)=(w_1,\dots, w_n)$ are now the weights of the representation, thought of in these new coordinates.

% We choose a new coordinate system $\left(\sigma_1, \ldots, \sigma_n\right)$ on $\mathrm{t}_{\mathbb{R}}^n$ such that $\mathrm{t}_{\mathbb{R}}^m$ is the zero set of $\sigma_{m+1}, \ldots, \sigma_n$. Let $w_1, \ldots, w_n$ denote the weights of the representation $\mathbf{N}$ expressed as linear functions in these coordinates, corresponding to the standard basis of $\mathfrak{t}_{\mathbb{R}}^n$.

We have an induced map 
\begin{equation}
   ( \ft^n)^* \leftarrow \ff^*,
\end{equation}
and for each $\ms$, we have a maximal ideal $I_\ms \in \ff^*$. 
An action of $F^{\vee}$ on \( \mathcal{A}_\hbar(T^n, \mathbf{N}) \) is a grading by the cocharacter lattice of $F$, so we use the grading induced by the projection from the cocharacter lattice of $T^n$ to $F$. This action preserves the algebraic structure, and the invariants \( \mathcal{A}_\hbar(T^n, \mathbf{N})^{F^{\vee}} \) consist of operators commuting with \( F^{\vee} \).
By \cite[3(vii)]{braverman2019mathematical}, $\mathcal{A}_{\hbar}(G, \mathbf{N})$  is the quantum Hamiltonian reduction of $   \mathcal{A}_{\hbar}(T^n, \mathbf{N}) $ by $ \Fl $:
\begin{Lemma}
\begin{equation}
    \mathcal{A}_{\hbar}(G, \mathbf{N})
    =  \mathcal{A}_{\hbar}(T^n, \mathbf{N})^{\Fl} /I_\ms \cong \End ( \cA _{\hbar}(T^n, \mathbf{N}) / I_ \ms \cA_{\hbar}(T^n, \mathbf{N})).
\end{equation}    
\end{Lemma}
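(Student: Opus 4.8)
The plan is to treat the first equality as the definition of quantum Hamiltonian reduction supplied by \cite[3(vii)]{braverman2019mathematical}, and then to deduce the endomorphism description by the standard cyclic-module argument, after checking the two structural facts that make it apply here. Throughout I write $A := \cA_\hbar(T^n,\bN)$ and let $\operatorname{Sym}(\ff^*)\hookrightarrow A$ be the flavor part of the Cartan under $\operatorname{Sym}((\ft^n)^*)\cong\operatorname{Sym}((\ft^m)^*)\otimes\operatorname{Sym}(\ff^*)$. The relation $[z_k,r^\lambda]=\langle\lambda,e_k\rangle r^\lambda$ gives $[f,r^\lambda]=\langle\lambda,f\rangle r^\lambda$ for $f\in\ff^*$, so $[f,\,\cdot\,]$ acts on the degree-$d$ piece of the $\Fl$-grading by the scalar $\langle d,f\rangle$; in particular $\operatorname{Sym}(\ff^*)$ lies in $A^{\Fl}$ and is central there, since a class $r^\lambda$ has degree $0$ exactly when $\lambda$ is a cocharacter of $G=T^m$. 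By \cite[3(vii)]{braverman2019mathematical} the algebra $\cA_\hbar(G,\bN)$ is the quantum Hamiltonian reduction of $A$ along $\Fl$ at $\ms$, and unwinding that definition is precisely the first equality $\cA_\hbar(G,\bN)=A^{\Fl}/I_\ms$.

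For the endomorphism description, set $K=\{f-\ms(f):f\in\ff^*\}$ and let $J=AK$ be the left ideal it generates; this is the left ideal meant by $I_\ms\cA_\hbar(T^n,\bN)$, and $M:=A/J$ is the cyclic module. Because $K\subseteq A_0$ and the grading is multiplicative, $J$ is graded with $J_d=A_dK$, so $M^{\Fl}=M_0=A_0/A_0K=A^{\Fl}/I_\ms$; here the only input is that taking $\Fl$-invariants is the exact projection onto the degree-zero summand of a grading by the free abelian cocharacter lattice of $F$. Any $\phi\in\End_A(M)$ is determined by $\phi(\overline1)=\overline a$, and the computation $(f-\ms(f))a_d\equiv\langle d,f\rangle a_d\pmod J$ (using $a_d(f-\ms(f))\in J$) shows that $\phi$ is well defined if and only if every nonzero-degree component $a_d$ ($d\neq0$) lies in $J$, i.e. if and only if $\overline a\in M^{\Fl}$. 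Thus $\phi\mapsto\phi(\overline1)$ is a linear isomorphism $\End_A(M)\xrightarrow{\sim}M^{\Fl}=\cA_\hbar(G,\bN)$.

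The one point requiring care, and the step I expect to be the real obstacle, is that $\phi\mapsto\phi(\overline1)$ reverses multiplication: for left-module maps $(\phi\psi)(\overline1)=\overline{\psi(\overline1)\,\phi(\overline1)}$, so a priori one only obtains an algebra anti-isomorphism $\End_A(M)\cong(A^{\Fl}/I_\ms)^{\mathrm{op}}$. To land on the stated form I would invoke the natural anti-involution of the quantized Coulomb branch algebra, which identifies $\cA_\hbar(G,\bN)$ with its opposite, converting the anti-isomorphism into the isomorphism $\cA_\hbar(G,\bN)\cong\End\!\big(\cA_\hbar(T^n,\bN)/I_\ms\cA_\hbar(T^n,\bN)\big)$ of the statement. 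The remaining care is bookkeeping: confirming that the residual $\Fl$-action on $M$ used above is well defined (immediate since $\ff^*$ is abelian, so $[f,f']=0$) and that the mass-parameter conventions of \cite{braverman2019mathematical} match the maximal ideal $I_\ms$ fixed in \Cref{coord}.
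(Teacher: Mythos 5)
Your proposal is correct, but it takes a genuinely different route from the paper: the paper gives no argument at all for this Lemma, simply citing \cite[3(vii)]{braverman2019mathematical} for the identification of $\cA_\hbar(G,\bN)$ with the quantum Hamiltonian reduction of $\cA_\hbar(T^n,\bN)$ along $\Fl$, and deferring the endomorphism-algebra description to \cite[Sections 3.5, 5.5]{braden2022quantizationsconicalsymplecticresolutions}, whereas you reprove the second identification from scratch via the standard graded idealizer computation. Your inputs are exactly the right ones and are all available in the paper: the $\Fl$-grading is read off from $[z_k,r^\lambda]=\langle\lambda,e_k\rangle r^\lambda$, the abelian decomposition \eqref{eq:abeliancoulomb} shows $A_d$ is spanned by $\operatorname{Sym}((\ft^n)^*)r^\lambda$ with $\lambda$ projecting to $d$, and your computation $(f-\ms(f))a_d\equiv\langle d,f\rangle a_d \pmod{J}$ is precisely why $\End_A(A/J)$ is the degree-zero quotient. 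What your route buys is transparency: it makes explicit the structural hypotheses (multiplicativity of the grading, $K\subseteq A_0$ central in degree zero, invariants $=$ degree-zero part) that the paper leaves buried in the references, and it isolates the genuinely nontrivial convention issue. On that issue --- the opposite-algebra twist --- your concern is well taken, but there are two cheaper resolutions than appealing to a general anti-involution of quantized Coulomb branches: first, the paper's own notation $\cA_\hbar(T^n,\bN)/I_\ms\cA_\hbar(T^n,\bN)$ reads as the quotient by the \emph{right} ideal $I_\ms A$, and for the resulting right module the evaluation $\phi\mapsto\phi(\overline{1})$ is already an algebra homomorphism, with no $\mathrm{op}$ appearing; second, in this abelian case the anti-involution you want can be written down explicitly, namely the map $\tau$ with $\tau(r^{e_k})=r^{-e_k}$, $\tau(r^{-e_k})=r^{e_k}$, which one checks preserves $[r^{e_k},r^{-e_k}]=1$, fixes the Cartan (hence $\ff^*$ and $I_\ms$) pointwise, sends $A_d$ to $A_{-d}$, and therefore restricts to $A_0=A^{\Fl}$ and descends to the reduction, so your hedged final step does go through. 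Either way your argument is complete; the paper's citation-only treatment buys brevity at the cost of leaving the left/right and $\mathrm{op}$ conventions, which you correctly identify as the only delicate point, entirely implicit.
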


The latter is given by \cite[Section 3.5; Section 5.5]{braden2022quantizationsconicalsymplecticresolutions}
$\mathcal{A}_{\hbar}(T^n, \mathbf{N})$ is graded by the cocharacters of the flavor group. The algebra $ \mathcal{A}_{\hbar}(G, \mathbf{N})$ is in degree zero.
Now we consider the restriction of the operator representation of $   \mathcal{A}_{\hbar}(T^n, \mathbf{N})$
as an operator representation of $\mathcal{A}_{\hbar}(G, \mathbf{N})$. In coordinates, the representation is restricted to the first $m$ components of $\ft^n = \ft^m + \ff$, as a coset, and the cocharacter of $T^m$.

Recall from \Cref{363} that $\cA_\hbar (T, \bN)$ has an injective map to $\cA(T,0)$, so we can pull back the action given in \Cref{lemma:pureab}.  Thus, we have that:

\begin{Lemma} \label{lemma:coulomb_representation_nontrivial_flavor}
For \( G = (\mathbb{C}^*)^m \) and \( \bN = (\mathbb{C})^n \),  the algebra $\cA(G, \bN)$ has a representation on the space $\cP (\RR\times \ZZ)^n$ where monopole operators act by shift operators times polynomials.

% We have the following:

%   For \( G = (\mathbb{C}^*)^m \) and \( \bN = \CC^m \),  there exists a holomorphic operator representation of the quantized Coulomb branch algebra \( \mathcal{A}_\hbar(G, \bN) \) on  \( (\mathcal{P}(\mathbb{R} \times \mathbb{Z}))^m \), for a fixed nontrivial element $\ms \in \ff$, defined by the following map \( \alpha_C ^\ms\):
%     \begin{align}
%         t_k ^d &\mapsto \delta_{b_k, d}  , \\
%         r^{e_k} &\mapsto X_k, \\
%         r^{-e_k} &\mapsto P_k,
%     \end{align}
% where $\sigma_k + \ms \in \ft ^m \oplus \ff$  ,$ b_k  \in X^\vee (T^m)  $.
%     \begin{align}
%         r^{e_k} r^{-e_k} &\mapsto \left( \prod_w \frac{1}{2} - i  w(\sigma_k+ \m ) + \frac{1}{2} b_k - \frac{1}{2} \right), \\
%         r^{-e_k} r^{e_k} &\mapsto \left( \prod_w \frac{1}{2} - i  w(\sigma_k+ \m ) + \frac{1}{2} b_k + \frac{1}{2} \right).
%     \end{align}
%   This representation acts non-trivially only on the degree-zero sub-module of the filtered quantization.
% Similarly, we have an anti-holomorphic representation via the map \( \bar{\alpha}_C^\ms \):
%     \begin{align}
%         t_k ^d &\mapsto \delta_{b_k, d} , \\
%         r^{e_k} &\mapsto - \widetilde{X}_k+ \Id, \\
%         r^{-e_k} &\mapsto - \widetilde{P}_k+ \Id,
%     \end{align}
\end{Lemma}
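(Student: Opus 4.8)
The plan is to realize the desired representation as a restriction and specialization of the representation of $\cA_\hbar(T^n,\bN)$ already constructed for the full torus. The starting point is the quantum Hamiltonian reduction description obtained just above, namely the isomorphism $\cA_\hbar(G,\bN)\cong\cA_\hbar(T^n,\bN)^{\Fl}/I_\ms$, together with the holomorphic operator representation of $\cA_\hbar(T^n,\bN)$ on $\cP(\RR\times\ZZ)^n$ supplied by \Cref{lemma:coulomb_representation_ndim} (equivalently, obtained by pulling back the pure-gauge representation of \Cref{lemma:pureTrep} along the inclusion $\cA_\hbar(T^n,\bN)\hookrightarrow\cA_\hbar(T^n,0)$ of \Cref{363}). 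In this representation every monopole operator $r^\lambda$ acts by the shift-times-polynomial operator of \eqref{798}.

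First I would restrict this representation to the subalgebra $\cA_\hbar(T^n,\bN)^{\Fl}$ of $\Fl$-invariants. Because the $\Fl$-grading is by the cocharacter lattice of $F=T^n/T^m$, the invariant part is generated over the Cartan by the operators $r^\lambda$ with $\lambda$ lying in the cocharacter lattice of $T^m\subset T^n$, i.e.\ exactly the monopole operators of the gauge group $G=T^m$. Under \eqref{798} these act by shifting only the discrete coordinates in the gauge directions while multiplying by polynomial factors built from the weights of $\bN$. The flavor Cartan subalgebra $\operatorname{Sym}(\ff^*)$ sits inside this invariant subalgebra and is central; in the coordinates of \Cref{coord} (using the change of variables $\something$ and the splitting $\ft^n=\ft^m\oplus\ff$) its generators act by multiplication by polynomials in the flavor components of $\sigma$.

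Next I would implement the specialization by $I_\ms$. Restricting the continuous coordinate $\sigma\in\ft^n_\RR$ to the coset $\ft^m_\RR+\ms$ — fixing the flavor components to the mass value $\ms$ — forces every generator of $\operatorname{Sym}(\ff^*)$ to act by its scalar value at $\ms$, so that the two-sided ideal generated by $I_\ms$ acts by zero. The $\Fl$-invariant operators preserve this coset: they commute with the flavor torus, and the shifts $\cT_\pm$ attached to $\lambda$ in the cocharacter lattice of $T^m$ move only the gauge directions, leaving the fixed flavor components untouched. Hence the restricted action of $\cA_\hbar(T^n,\bN)^{\Fl}$ descends to the quotient, and by the reduction isomorphism this is precisely a representation of $\cA_\hbar(G,\bN)$ on the space $\cP(\RR\times\ZZ)^n$ of the statement. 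The resulting operators are again shift operators (in the cocharacters of $T^m$) times polynomials, now with the mass shift absorbed into the weight factors $w(\sigma+\ms)$, which is the asserted form.

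The main obstacle I anticipate is the well-definedness of this last step: verifying that passing to $\Fl$-invariants and then fixing the flavor masses is genuinely compatible with the algebra relations, so that the construction factors through the quotient $\cA_\hbar(T^n,\bN)^{\Fl}/I_\ms$ and not merely through the invariant subalgebra. Concretely, one must check that the coset $\ft^m_\RR+\ms$ is invariant under all the $\Fl$-invariant monopole operators (not just their leading shifts) and that the polynomial coefficients introduce no poles obstructing the restriction to the coset; here the explicit formula \eqref{798} and the commutation relation $[z_k,r^\lambda]=\langle\lambda,e_k\rangle r^\lambda$ do the work. Once this compatibility is established, the identification with $\cA_\hbar(G,\bN)$ and the shift-times-polynomial form of the monopole operators follow immediately.
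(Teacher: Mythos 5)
Your proposal follows essentially the same route as the paper: the quantum Hamiltonian reduction isomorphism $\cA_\hbar(G,\bN)\cong\cA_\hbar(T^n,\bN)^{\Fl}/I_\ms$, restriction of the torus representation (pulled back via \Cref{363} from the pure-gauge action of \Cref{lemma:pureab}) to the coset $\ft^m+\ms$ in the coordinates of \Cref{coord}, yielding shift operators times polynomials with the mass absorbed into $w(\sigma+\m)$. Your explicit verification that the $\Fl$-invariant operators preserve the coset and that the action factors through the quotient by $I_\ms$ is a welcome addition of rigor, since the paper asserts this restriction without spelling out that compatibility check.
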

% We denote the weights as $w_1, \cdots , w_n$'s.
The ground state for this representation \ref{lemma:coulomb_representation_nontrivial_flavor} in \( m \)-dimensions, for a fixed nontrivial element $\ms \in \ff$, is given by:
\begin{equation}
|1\rangle=\delta_{\underline{b}, 0} \prod_w \Gamma\left(\frac{1}{2}-i(w(\sigma+\m))\right),
\end{equation}
where $\underline{b} = (b_1, \cdots , b_m) $.
After changing coordinates, this is the pullback of \cref{3.52}.
After the coordinate change and restriction to the first $m$ coordinates, the equations in \Cref{lm:alphac} give the following:
\begin{Lemma} \label{lm:alphacwm}
          \begin{align} 
\alpha_C\left(r^{\lambda}\right)|1\rangle&=\bar{\alpha}_C\left(r^{-\lambda}\right)|1\rangle, & \alpha_C\left(r^{-\lambda}\right)|1\rangle&=\bar{\alpha}_C\left(r^{\lambda}\right)|1\rangle, \\
            \langle 1| \alpha_C\left(r^{\lambda}\right)&=-\langle 1| \bar{\alpha}_C\left(r^{-\lambda}\right), & \langle 1| \alpha_C\left(r^{-\lambda}\right)&=-\langle 1| \bar{\alpha}_C\left(r^{\lambda}\right).  
        \end{align}
\end{Lemma}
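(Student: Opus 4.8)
The plan is to deduce the four identities by reducing to the case of a single coordinate, where the analogous statements have already been established in Lemma \ref{lm:alphac}, and then transporting them through the coordinate change $\something$ and the restriction to the first $m$ coordinates. First I would recall that under the identification $\ft^n = \ft^m \oplus \ff$ of \Cref{coord}, a coweight $\lambda$ of $T^m$ is expressed in the weight coordinates $(w_1,\dots,w_n)$ as a tuple of pairings $\xi_i(\lambda) = \langle \lambda, e_i\rangle$. By the multiplicative structure of the representation in \Cref{lemma:coulomb_representation_ndim}, the operator $\alpha_C(r^\lambda)$ factors as a product $\prod_i \alpha_C(r^{e_i})^{\xi_i(\lambda)}$ over the coordinates (with $P_i$ replacing $X_i$ where $\xi_i(\lambda)<0$), and likewise $\bar\alpha_C(r^{-\lambda}) = \prod_i \bar\alpha_C(r^{-e_i})^{\xi_i(\lambda)}$.

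The key step is that the ground state $|1\rangle = \delta_{\underline{b},0}\prod_w \Gamma(\tfrac12 - i\,w(\sigma+\m))$ is, after the coordinate change by $\something$, precisely the pullback of the product ground state \cref{3.52}, hence it factors as a product over the individual coordinates. Because the operators $X_i, P_i, \widetilde X_i, \widetilde P_i$ each act only on their own coordinate, I would apply the single-coordinate identities $\alpha_C(r^{e_i})|1\rangle = \bar\alpha_C(r^{-e_i})|1\rangle$ and $\alpha_C(r^{-e_i})|1\rangle = \bar\alpha_C(r^{e_i})|1\rangle$ from \Cref{lm:alphac} factor by factor. Multiplying these together across all $i$ yields $\alpha_C(r^\lambda)|1\rangle = \bar\alpha_C(r^{-\lambda})|1\rangle$ and the companion identity; the two adjoint statements follow by taking adjoints, using that $\cT_\mp$ is adjoint to $\cT_\pm$ and that multiplication by a polynomial $p(z)$ is adjoint to multiplication by $p(\bar z)$, exactly as in the passage preceding \Cref{lemma:alphaC1}.

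The one point requiring genuine care is the compatibility of the restriction to the first $m$ coordinates with the mass-parameter shift $\sigma \mapsto \sigma + \m$: the flavor direction $\ff$ enters the ground state only through the fixed shift $w(\sigma+\m)$, and since $\m \in \ff$ is annihilated by the pairing with $T^m$-coweights, the relevant shift operators still act as pure shifts in the gauge coordinates $\sigma_1,\dots,\sigma_m$ while treating the $\m$-contribution as a constant. I expect this bookkeeping—checking that the functional equation \cref{funcgamma} continues to produce the sign and index matching $\delta_{b-1,0} \mapsto \delta_{b,1}$ after the weights $w$ are substituted for the coordinate projections—to be the main obstacle, though it is ultimately a routine consequence of the per-coordinate computation in \Cref{lemma:alphaC1} once the product factorization of $|1\rangle$ is in place.
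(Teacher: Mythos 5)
Your proposal is correct and follows essentially the same route as the paper, which proves this lemma in one line by observing that the ground state is the pullback of \cref{3.52} under the coordinate change $\something$ and that, after restricting to the first $m$ coordinates, the per-coordinate identities of \Cref{lm:alphac} (proved via \cref{funcgamma} and the adjointness of $\cT_\pm$ and $\cT_\mp$, as in \Cref{lemma:alphaC1}) transport directly. Your write-up simply makes explicit the product factorization over coordinates and the bookkeeping for the constant shift by $\m$ that the paper leaves implicit.
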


\subsection{Nonabelian Gauge Theory}

For the remainder of the paper, we will assume that $G= \prod GL_{a_i}$. 
The twisted traces in the abelian case can be generalized for all quiver gauge theories by localizing $\cA_\hbar (G,\bN) $ to its maximal torus.

\begin{Definition} \label{3.18}
The Weyl group $W \cong S_n$ acts on $\cP (\RR \times \ZZ)^n$ by permuting coordinates:  
\[
w \cdot \big( (\sigma_1, b_1), \dots, (\sigma_n, b_n) \big) = \big( (\sigma_{w(1)}, b_{w(1)}), \dots, (\sigma_{w(n)}, b_{w(n)}) \big).
\]

Let $\FP^n$ be the space of Weyl-invariant functions $f$ where:
\begin{enumerate}
    \item For each fixed $b \in \mathbb{Z}^n$, the map $\sigma \mapsto f(\sigma, \mathbf{b})$ extends to a meromorphic function on $\mathbb{C}^n$.
\item This meromorphic function has no poles on $\mathbb{R}^n$ and decays exponentially as $|\Re(\sigma)| \to \infty$.
    \item For each abelian monopole operator $r^{\lambda}$, the function $ \alpha_C(r^{\lambda})f$ satisfies conditions (1) and (2) above.
\end{enumerate}

%  For $\prod_{i=1}^k \GL(a_i, \mathbb{C})$, $\sa = \sum a_i$, define:
% \[
% \FP ^a \coloneqq \left( \MRZ^{\sa} \right)^W, \quad W = \prod_{i=1}^k S_{a_i}.
% \]
\end{Definition}
% At $n=1$, $\mathcal{F}^1 = \mathcal{M}_{\mathbb{R} \times \mathbb{Z}}$ (trivial $W$).

By the localization \cref{368} and the representation of the abelian theory of \Cref{lemma:coulomb_representation_nontrivial_flavor}, there exists a representation of \( \mathcal{A}_\hbar(G, \mathbf{N}) \), as operators on $\FP^n$.
 %  Here, $\Delta_i$ is the roots of the $\GL(V_i) $; $f$ is a symmetric function, and $f\left(w_{i, I}\right)$ denotes substituting the weights $\left\{w_{i, r}\right\}_{r \in I}$ into $f$. 
 % Here, \( t_{i,r} \)'s are generators of \( \CC[\ZZ] \), \( \xi_{i,r} \) are characters of \( T ^{a_i} \), and \( v_{i,r} \) are the operators defined in \eqref{eq:v} acting on the \( r \)-th coordinate. The antiholomorphic representation is done similarly by replacing $v_{i,r}$ with \( -\tilde{v}_{i,r} \), the conjugate operators from \eqref{eq:v}.
% \begin{Definition}
% Let $\mathcal{D}^n \subseteq \mathcal{F}^n$ be the subspace satisfying:
% \[
% f(\cdot, w\mathbf{b}) = w \cdot f(\cdot, \mathbf{b}) \quad \forall w \in W, \ \forall \mathbf{b} \in \mathbb{Z}^n.
% \]
% The shift operators $v_i$ can be restricted on $\cD$, as $v_{i,r}^{\mathcal{D}} : \mathcal{D}^n \to \mathcal{D}^n$:
% \[
% (v_{i,r}^{\mathcal{D}} f)(\boldsymbol{\sigma}, \mathbf{b}) = f(\boldsymbol{\sigma}, b_1, \dots, b_r + \delta_{i,r}, \dots, b_n),
% \]
% with $W$-action $w \cdot (v_{i,r}^{\mathcal{D}} f) = v_{w(i), w(r)}^{\mathcal{D}} (w \cdot f)$. 
% \end{Definition}
% From now on, we omit the superscript $\cD$.
We denote $z_{i,r} = \frac{1}{2} - i \sigma_{i,r} +\frac{b_{i,r}}{2}$.
We recall the operator \(X_{i,r}, P_{i,r}\) defined in \crefrange{eq:Xop}{eq:Pop}. These shift operators act as:
\begin{align}
X_{i,r}: (\sigma_{i,r}, b_{i,r}) \mapsto (\sigma_{i,r} + i/2, b_{i,r} - 1), \label{3.58} \\
P_{i,r}: (\sigma_{i,r}, b_{i,r}) \mapsto (\sigma_{i,r} - i/2, b_{i,r} + 1). \label{3.59}
\end{align}

We observe that the shifts defined by $X_{i, r}$ and $P_{i, r}$ leave $z_{i, r}=\frac{1}{2}-i \sigma_{i, r}+\frac{b_{i, r}}{2}$ invariant:
under $X_{i, r}$ 
$$
z_{i, r} \mapsto \frac{1}{2}-i\left(\sigma_{i, r}+\frac{i}{2}\right)+\frac{b_{i, r}-1}{2}=\frac{1}{2}-i \sigma_{i, r}+\frac{1}{2}+\frac{b_{i, r}}{2}-\frac{1}{2}=z_{i, r} .
$$
and under $P_{i, r}$:
$$
z_{i, r} \mapsto \frac{1}{2}-i\left(\sigma_{i, r}-\frac{i}{2}\right)+\frac{b_{i, r}+1}{2}=\frac{1}{2}-i \sigma_{i, r}-\frac{1}{2}+\frac{b_{i, r}}{2}+\frac{1}{2}=z_{i, r} .
$$

\begin{Proposition} \label{prop:pureg} \label{prop:general_rep}
For $V_i=\mathbb{C}^{a_i}$, and $\operatorname{GL}(V):=\prod_{i \in Q_0} \operatorname{GL}\left(V_i\right) = \prod_i \GL_{a_i} $ with maximal torus \( T = (\CC^*)^{a_i} \),
we have a holomorphic operator representation $\alpha_C$ of \( \mathcal{A}_\hbar(G, \mathbf{N}) \) on \(\FP ^n \), where for each $V_i$, \( i = 1, \dots, n \), $  t_{i,r} \mapsto  z_ {i,r} $,
and 
the generators $\bV_\lambda$'s defined in 
\eqref{eq:Vlambda} are sent to

 \begin{equation}  \label{eq:opg}
    \alpha_C   \left( \mathbf{V}_\lambda  \right) =\sum_{w \in W / W_\lambda} \frac{  \alpha_C \left(  r^{w \lambda}  \right)  }{\prod_{\substack{\beta \in \Delta^{+} \\\langle\lambda, \beta\rangle \neq 0}} w \beta} = \sum_{w \in W / W_\lambda} \frac{\prod_{i,r} \mathcal{O}_{i,r}^{\xi_{i,r}(w\lambda)}}{\prod_{\substack{\beta \in \Delta^+ \\ \langle \lambda, \beta \rangle \neq 0}} w\beta},
    \end{equation}
    where
    \begin{equation} \label{994}
    \alpha_C \left(  r^{w \lambda}  \right) =\prod_{i,r} \mathcal{O}_{i,r}  = \prod_{i,r}
       \begin{cases}
      X_{i,r}^{ \xi_{i,r} (w \lambda )} \quad \text{if }  \xi_{i,r}( w\lambda) >0 \\
      P_{i,r} ^ {\xi_{i,r} (w\lambda) } \quad \text{if }  \xi_{i,r}( w \lambda) < 0.
    \end{cases}
    \end{equation}

Similarly, for the antiholomorphic representation \(\bar{\alpha}_C\), the operator \(\widetilde{\mathcal{O}}_{i,r}\) corresponding to \(\bar{\alpha}_C \left( r^{w \lambda} \right)\) is defined by replacing \(X_{i,r}\) with \(-\widetilde{X}_{i,r}\) and \(P_{i,r}\) with \(-\widetilde{P}_{i,r}\) in the holomorphic expression:
\[
\bar{\alpha}_C \left( r^{w \lambda} \right) = \widetilde{\mathcal{O}}_{i,r} = \prod_{i,r}
\begin{cases}
\left(-\widetilde{P}_{i,r}\right)^{\xi_{i,r}(w\lambda)} & \text{if } \xi_{i,r}(w\lambda) > 0\\
\left(-\widetilde{X}_{i,r}\right)^{\xi_{i,r}(w\lambda)} & \text{if } \xi_{i,r}(w\lambda) < 0.
\end{cases}
\]
\end{Proposition}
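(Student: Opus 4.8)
The plan is to obtain the representation by transporting the abelian representation along the localization isomorphism of \eqref{368}, and then to verify that the resulting operators genuinely preserve the Weyl-invariant, pole-free space $\FP^n$ of \Cref{3.18}, rather than merely the larger localized space. First I would fix the source of the operators. By \Cref{lemma:coulomb_representation_nontrivial_flavor} and \Cref{lemma:coulomb_representation_ndim} we already have a holomorphic representation $\alpha_C$ of $\cA_\hbar(T,\bN)$ on $\cP(\RR\times\ZZ)^n$ in which each Cartan generator $t_{i,r}$ acts by multiplication by $z_{i,r}$ and each abelian monopole $r^{\lambda}$ acts by the shift-operator monomial $\prod_{i,r}\mathcal{O}_{i,r}$ of \eqref{994}. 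After inverting $\hbar$ and the root factors, this extends to a representation of the localized algebra $\cA_\hbar(T,\bN)\Sinv$ on the space of functions with isolated poles. The diagram \eqref{368} together with \cite[Remark 5.23]{braverman2019mathematical} identifies $\cA_\hbar(G,\bN)$ with a subalgebra of $\cA_\hbar(T,\bN)\Sinv^{\,W}$ via $\ab$, so precomposing the abelian representation with $\ab$ produces a candidate action of $\cA_\hbar(G,\bN)$ on this meromorphic function space.

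Next I would read off the formulas on generators. Applying $\alpha_C\circ\ab$ to the Cartan of $\cA_\hbar(G,\bN)$ sends the symmetric functions in the $t_{i,r}$ to multiplication by the corresponding symmetric functions in the $z_{i,r}$; in particular $t_{i,r}\mapsto z_{i,r}$. Applying $\alpha_C\circ\ab$ to the relation \eqref{eq:Vlambda}, which expresses the nonabelian monopole $\bV_\lambda$ as the symmetrized sum $\sum_{w\in W/W_\lambda} r^{w\lambda}\big/\prod_{\beta}w\beta$, and substituting $\alpha_C(r^{w\lambda})=\prod_{i,r}\mathcal{O}_{i,r}$, yields exactly \eqref{eq:opg} and \eqref{994}. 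The antiholomorphic formulas follow identically after replacing $X_{i,r},P_{i,r}$ by $-\widetilde{P}_{i,r},-\widetilde{X}_{i,r}$, using the antiholomorphic half of \Cref{lemma:coulomb_representation_ndim}.

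The substantive step, and the main obstacle, is to show that these operators map $\FP^n$ into $\FP^n$ and not just into the localized space. Weyl-invariance is immediate: every symmetric polynomial in the $z_{i,r}$ commutes with the $W$-action, and each $\bV_\lambda$-operator is a sum over a full set of coset representatives in $W/W_\lambda$, hence $W$-invariant. The real issue is that each individual summand of \eqref{eq:opg} carries the denominator $\prod_{\beta}w\beta$, whose zeros lie along reflection hyperplanes $\sigma_{i,r}=\sigma_{i,s}$ (in the relevant $\underline{b}$-sectors) and therefore meet $\RR^n$, so the summands separately violate condition (2) of \Cref{3.18}. I expect to prove that these apparent poles cancel in the sum by a residue argument: the cosets contributing a pole along a given reflection hyperplane $H_\beta$ pair up under the reflection $s_\beta$, and for $f\in\FP^n$ the two paired terms have equal numerators but opposite-sign denominators along $H_\beta$ (using that the shift operators restricted to $H_\beta$ are intertwined by $s_\beta$ and that $f$ is Weyl-invariant), so their residues cancel and the sum is regular on $\RR^n$.

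Equivalently, this regularity can be guaranteed a priori: $\bV_\lambda$ lies in the unlocalized algebra $\cA_\hbar(G,\bN)$, being in the image of $\iota_*$ by \cite[Proposition A.2]{braverman2018coulomb}, and condition (3) of \Cref{3.18} is tailored precisely so that images of abelian monopoles preserve regularity and exponential decay; functoriality of \eqref{368} then forces the $\bV_\lambda$-operator to preserve $\FP^n$. Finally, the exponential decay in condition (2) survives because away from the now-cancelled poles each factor $1/\prod_{\beta}w\beta$ decays at worst polynomially, so it does not destroy the exponential decay already present in $\alpha_C(r^{w\lambda})f$. Assembling the multiplication operators for $\operatorname{Sym}(\mathfrak{t}^*)^W$ with the $\bV_\lambda$-operators, and checking they satisfy the defining relations of $\cA_\hbar(G,\bN)$ (which holds because they are the images under the algebra homomorphism $\alpha_C\circ\ab$), completes the construction of the holomorphic representation, with the antiholomorphic case being entirely parallel.
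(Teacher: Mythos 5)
Your proposal is correct and follows essentially the same route as the paper's proof: identify $\cA_\hbar(G,\bN)$ with the $W$-invariant part of the localized abelian algebra via $\ab$, transport the abelian shift-operator representation, and then show the symmetrized sums in \eqref{eq:opg} preserve $\FP^n$ by cancelling the apparent poles along reflection hyperplanes (your pairing of cosets under $s_\beta$, with numerators matched by $W$-invariance of $f$ and denominators flipping sign, is the same cancellation mechanism the paper uses) together with the observation that the denominators contribute only polynomial growth against the exponential decay. The only divergence is your optional ``a priori via functoriality'' shortcut, which the paper does not use and which is weaker than your main argument, since condition (3) of \Cref{3.18} constrains abelian monopoles rather than the symmetrized $\bV_\lambda$; your explicit residue argument is what actually carries the proof, just as in the paper.
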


\begin{proof} We provide a proof for one copy of $\GL_{n}$. The proof is similar for a product.

% On each coordinate, as seen in \eqref{eq:fd}, the functions $f_d (x) $ are real analytic functions and $\tilde{f}_d$ are with simple poles at $i (\ZZ_{\geq 0} + \frac{1}{2})$ at worst. 
% Therefore, the operators \eqref{eq:opg} act on $(\FP)^{a_i}$. 
% In particular, $\frac{1}{\prod_{\beta \in \Delta} \Gamma(i\beta)}$ are $W$ invariant entire functions. 

 By the localization isomorphism \(\iota_*\) in \eqref{eq:2.3} and the commutative diagram in \Cref{sec:localization}, the algebra \(\mathcal{A}_h(G,\mathbf{N})\) is isomorphic to the \(W\)-invariant subalgebra of \(\mathcal{A}_h(T,\mathbf{N})[\hbar^{-1}, (\beta + a\hbar)^{-1}]\) via the map \(\ab\). Under this map, the generator \(\mathbf{V}_\lambda\) is expressed as \eqref{eq:Vlambda}.  
  
 The abelian algebra \(\mathcal{A}_h(T,\mathbf{N}_T)\) admits an operator representation on \(\mathcal{P}(\mathbb{R}\times\mathbb{Z})^a\) as constructed in \Cref{sec:abelian}.

We note that the denominator $\prod w\beta$ cancels poles of the numerator under Weyl reflection by the following reasoning.
The denominator $\prod w \beta$ is a product of linear functions $w \beta(\sigma)$, which vanish on hyperplanes $H_{w \beta}=\left\{\sigma \in \mathbb{C}^n: w \beta(\sigma)=0\right\}$. For the overall expression to be regular on $\mathbb{R}^n$, the numerator must vanish on these hyperplanes to cancel the zeros of the denominator.

We take a closer look at the Weyl invariance. The sum over $w \in W / W_\lambda$ is symmetric under the Weyl group. For any $w^{\prime} \in W$, the set $\left\{w^{\prime} w: w \in W / W_\lambda\right\}$ is a reordering of the cosets. The denominator transforms as:
$$
\prod_\beta\left(w^{\prime} w\right) \beta=\operatorname{det}\left(w^{\prime}\right) \prod_\beta w \beta,
$$
where $\operatorname{det}\left(w^{\prime}\right)= \pm 1$. The numerator $\alpha_C\left(r^{w \lambda}\right) f$ transforms under $w^{\prime}$ as:
$$
w^{\prime} \cdot \alpha_C\left(r^{w \lambda}\right) f=\alpha_C\left(r^{w^{\prime} w \lambda}\right) f,
$$
due to the Weyl invariance of $f$ and the definition of $\alpha_C\left(r^{w \lambda}\right)$.

Now we show the cancellation on the hyperplanes. Fix a hyperplane $H=\left\{\sigma: \beta_0(\sigma)=0\right\}$ for some $\beta_0 \in \Delta^{+}$. Consider the terms in the sum corresponding to $w$ and $w^{\prime}$ such that $w^{\prime} \beta_0=-\beta_0$. Then the denominators for $w$ and $w^{\prime}$ differ by a sign on $H$.
The numerators $\alpha_C\left(r^{w \lambda}\right) f$ and $\alpha_C\left(r^{w^{\prime} \lambda}\right) f$ are related by the Weyl action and satisfy:
$$
\alpha_C\left(r^{w^{\prime} \lambda}\right) f=-\alpha_C\left(r^{w \lambda}\right) f \quad \text { on } H,
$$
due to the antisymmetry under reflection across $H$.
Thus, the sum of these two terms vanishes on $H$, canceling the simple zero from the denominator. This argument extends to all hyperplanes $H_{w \beta}$, ensuring that $\alpha_C\left(\mathbf{V}_\lambda\right) f$ has no poles on $\mathbb{R}^n$.

In addition, the space $\FP^n$ ensures that $f$ decays exponentially as $|\Re(\sigma)| \rightarrow \infty$. The operators $\alpha_C\left(r^{w \lambda}\right)$ introduce at most polynomial growth in $\sigma$, but the exponential decay of $f$ dominates. Therefore, $\alpha_C\left(\mathbf{V}_\lambda\right) f$ also decays exponentially, satisfying condition (2) of $E \mathcal{P}$.

While the elementary shift operators $X_{i,r}, P _{i,r}$ do not preserve $\FP^n$ individually, the symmetrized monopole operators $\mathbf{V}_\lambda$ \cref{eq:opg} do preserve this space.

Both numerator and denominator terms are $W$ invariant.

The antiholomorphic representation follows analogously by replacing \(\mathcal{O}_{i,r}\) with \(\widetilde{\cO}_{i,r}\).
\end{proof}

For $\GL_{n}$, the roots can be written as \( \Delta_i = \{ z_r - z_s \}_{r \neq s} \), and the positive roots are \( \Delta_i = \{ z_r - z_s \}_{r > s} \). The obvious extension holds for a product of $\GL_{a_i}$'s.

\subsection{A Subrepresentation}

  We believe the results should hold without changes in other types, but the proof is slightly simpler in this case. We denote the maximal torus of $G$ as $T= \prod\CC^{a_i}$.

% \begin{Proposition}
%     \label{prop:general_rep}
%  There exists a representation $\mathcal{A}_\hbar(G, \mathbf{N}) $ restricted from the representation of \( \mathcal{A}_\hbar\left( G, 0 \right) \) given by Proposition \ref{prop:pureg}. 
%  \end{Proposition}
%  \begin{proof}
%  It follows from that \( \mathcal{A}_\hbar(G, \mathbf{N}) \) is a subalgebra of \( \mathcal{A}_\hbar\left( G , 0 \right) \) by \eqref{eq:G0N}.
%  \end{proof} 

We let \( \Delta_{i,+} \) denote the positive roots of \( \operatorname{GL}(V_i) \) (i.e., \( \sigma_r - \sigma_s \) for \( r < s \) within the \( \operatorname{GL}(V_i) \) factor), and \( w \) are weights of the flavor symmetry representation.
\begin{Proposition}  \label{prop:1gauge}
The functions \begin{equation}  \label{eq:1withbeta}
    \IGN : = \delta_{\underline{b}, 0} \frac{\prod_{w} \Gamma\left(\frac{1}{2} - i w(\sigma + \m)\right)}{\prod_{i } \prod_{\beta \in \Delta_{i,+}} \Gamma(i\beta)\Gamma(-i\beta) } \qquad\text{and} \qquad
% \end{equation} 
% and the function
% \begin{equation} 
    |v\rangle : = \delta_{\underline{b}, 0} \frac{\prod_{w} \Gamma\left(\frac{1}{2} - i w(\sigma + \m)\right)}{\prod_{i } \prod_{\beta \in \Delta_{i,+}} \Gamma(i\beta)\Gamma(-i\beta) \beta^2 },
\end{equation} 
are holomorphic on $\A ^{\sum_{a_i}}$ for $\A$ defined in Definition \ref{def:A}.

\end{Proposition}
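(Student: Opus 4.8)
The plan is to factor each of the two functions into its \emph{numerator} $\prod_w \Gamma\!\left(\tfrac12 - i\,w(\sigma+\m)\right)$ and the reciprocal of its \emph{denominator} $\prod_i\prod_{\beta\in\Delta_{i,+}}\Gamma(i\beta)\Gamma(-i\beta)$ (with an extra $\beta^2$ for $\IGN\!$, wait, for $|v\rangle$), and to prove that each factor is separately holomorphic on $\A^{\sum_i a_i}$. Since a finite product of functions holomorphic on a common domain is again holomorphic there, and the prefactor $\delta_{\underline b,0}$ merely restricts us to the slice $\underline b = 0$, this settles the claim. Throughout we read $|v'\rangle$ and $|v\rangle$ as meromorphic functions of $\sigma\in\CC^{\sum_i a_i}$.

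First I would dispose of the denominator, which is in fact the easy half. Using the Euler reflection formula together with \eqref{funcgamma} and the identity $\sin(\pi i\beta)=i\sinh(\pi\beta)$, one computes
\begin{equation*}
\Gamma(i\beta)\,\Gamma(-i\beta)=\frac{\pi}{\beta\,\sinh(\pi\beta)},
\end{equation*}
so that $1/\bigl(\Gamma(i\beta)\Gamma(-i\beta)\bigr)=\beta\sinh(\pi\beta)/\pi$ is \emph{entire}, with a double zero along each root hyperplane $\{\beta=0\}$. For $|v\rangle$ the extra $\beta^2$ converts this into $\sinh(\pi\beta)/(\pi\beta)$, which is still entire because the apparent singularity at $\beta=0$ is removable with value $1$. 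Hence for both functions the reciprocal denominator is holomorphic on all of $\CC^{\sum_i a_i}$, in particular on $\A^{\sum_i a_i}$, and contributes no poles; note that these are exactly the $\sinh$-factors that reappear in the measure $\w$.

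It then remains to treat the numerator. Each factor $\Gamma\!\left(\tfrac12 - i\,w(\sigma+\m)\right)$ is the pullback of the single-variable function $\tilde f_0$ of \Cref{sec:analysis} under the affine-linear substitution $\sigma\mapsto w(\sigma+\m)$; by \eqref{eq:gammacosh}-type analysis its poles sit where $w(\sigma+\m)\in -i\bigl(\tfrac12+\ZZ_{\ge 0}\bigr)$. The key point is that a flavor weight $w$ is, in the coordinates fixed by $\something$, a single coordinate function shifted by the mass, so $\Im\,w(\sigma+\m)=\Im\sigma_{j}$ for one index $j$. On $\A^{\sum_i a_i}$ one has $|\Im\sigma_{j}|<1+\epsilon$ for every coordinate, whence $\Re\bigl(\tfrac12-i\,w(\sigma+\m)\bigr)=\tfrac12+\Im\sigma_{j}$ lies in the interval $(\tfrac12-(1+\epsilon),\tfrac12+(1+\epsilon))$, i.e. the argument lands inside the strip $\A$. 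I would then invoke the already-established lemma that $\tilde f_d$ is holomorphic on $\A$ to conclude that each factor, and hence the finite product over the flavor weights, is holomorphic on $\A^{\sum_i a_i}$.

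The hard part will be the bookkeeping that makes this last step rigorous: one must confirm that in the chosen splitting $\ft^n=\ft^m\oplus\ff$ every flavor weight really is a single admissible coordinate (so that the per-coordinate width bound $|\Im\sigma_j|<1+\epsilon$ transfers verbatim to $|\Im\,w(\sigma+\m)|$), rather than a difference of gauge coordinates for which the imaginary part could grow to $2(1+\epsilon)$ and re-enter the pole set. Once this is checked, holomorphy of the product follows, and everything else — finiteness of the products over $w$ and $\beta$, and the $W$-invariance of numerator and denominator that places $|v'\rangle,|v\rangle$ in $\FP^n$ — is routine.
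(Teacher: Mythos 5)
Your denominator argument is correct, and it is in substance the \emph{entirety} of the paper's own proof: the paper disposes of the Proposition by noting that $1/\bigl(\Gamma(i\beta)\Gamma(-i\beta)\bigr)$ and $1/\bigl(\Gamma(i\beta)\Gamma(-i\beta)\beta^2\bigr)=1/\bigl(\Gamma(1+i\beta)\Gamma(1-i\beta)\bigr)$ are entire (via the functional equation \eqref{funcgamma} and entirety of $1/\Gamma$), which is exactly your reflection-formula computation $1/\bigl(\Gamma(i\beta)\Gamma(-i\beta)\bigr)=\beta\sinh(\pi\beta)/\pi$ in different clothing; indeed your form is the identity the paper itself records later as \cref{eq:sinh}. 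The paper's proof stops there and never addresses the numerator, implicitly leaning on the earlier (unnumbered) lemma that $\tilde f_d$ is holomorphic on the strip $\A$.

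The genuine gap is the one you flagged yourself and left unresolved, and it cannot be resolved in the stated generality: for a type $A$ quiver gauge theory with at least one edge, $\bN$ contains bifundamental summands, whose weights are \emph{differences} of gauge coordinates, $w=\sigma_{i,r}-\sigma_{j,s}$ (up to a real mass), not single coordinates. On $\A^{\sum a_i}$ one then only controls $|\Im\, w(\sigma)|<2(1+\epsilon)$, and the factor $\Gamma\!\left(\tfrac12 - i\,w(\sigma+\m)\right)$ acquires honest poles inside the polystrip: e.g.\ choosing $\Im\sigma_{i,r}=-1$, $\Im\sigma_{j,s}=\tfrac12$ (both admissible) and tuning real parts so that $w(\sigma+\m)=-\tfrac{3i}{2}$ makes the argument of the Gamma equal to $-1$, a pole that no per-coordinate width bound excludes and that lies outside the reach of the strip lemma for $\tilde f_d$. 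So the sentence ``once this is checked, holomorphy of the product follows'' is precisely where your proof breaks: the check fails whenever the quiver has an edge, and only the edgeless case (pure fundamental matter, as in the paper's $\GL_2$, $\CC^2$ example) behaves as you describe. To be fair, the paper's one-line proof is silent on the numerator altogether, so your attempt is more ambitious and more honest about where the analytic difficulty sits; but as written it establishes holomorphy of $\IGN$ and $|v\rangle$ on $\A^{\sum a_i}$ only for quivers without bifundamental matter, not for the class of theories the Proposition is invoked for.
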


\begin{proof}
Note that 
\begin{equation}
         \frac{1}{  \Gamma ( i b)\Gamma (- i b) }  \label{eq:gammabete}\qquad\text{and} \qquad
 \frac{1}{ \Gamma ( i b)\Gamma (- i b) b^2 }  =    \frac{1}{  \Gamma ( 1+ i b)\Gamma (1 - i b) }  
\end{equation}
  are entire functions. Thus, the denominators of $\IGN $ and $|v \rangle$ do not have poles.
% Furthermore, if $b$ is real, then 
% \begin{equation} \label{3.68}
%          \frac{1}{ b  \Gamma ( i b)\Gamma (- i b) } =\frac{1}{b|\Gamma (b i )|^2} = \frac{ \sinh( \pi b)}{ \pi} .
% \end{equation}
    % b \frac{1}{| \Gamma (1+ b i)|^2} = \frac{\sinh (\pi b)}{\pi} \\
    % (i b) \Gamma ( i b ) = \Gamma (1 + i b) 
%\end{align}
% Therefore,
% \begin{align}
%     \frac{1}{ (i b)^2 | \Gamma ( i b)|^2 } = \frac{1}{ | \Gamma (1+ i b )|^2}, \\
% \end{align}
%The right side of\eqref{eq:gammabete} is an entire function, then \eqref{eq:1withbeta} is an entire function. 
%By Proposition \ref{prop:general_rep}, t
\end{proof}

\begin{Corollary}
    The functions $\IGN$ and $|v \rangle$ lie in $\FP^{\sum_{a_i}}$.
\end{Corollary}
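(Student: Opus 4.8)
The plan is to verify, for each of the two functions, the three defining conditions of $\FP^{\sum_{a_i}}$ in Definition \ref{3.18} together with Weyl invariance; Proposition \ref{prop:1gauge} already provides the holomorphicity on $\A^{\sum_{a_i}}$ underlying condition (1) and the pole-freeness in (2). First I would record Weyl invariance. The prefactor $\delta_{\underline b,0}$ is manifestly invariant, the numerator $\prod_w \Gamma(\tfrac12 - iw(\sigma+\m))$ is a product over the full weight set of $\bN$, which $W$ permutes, and each denominator factor $\Gamma(i\beta)\Gamma(-i\beta)$ (resp. with the extra $\beta^2$) depends only on the unordered pair $\{\beta,-\beta\}$, so the product over $\Delta_{i,+}$ is really a product over the $W$-stable root system $\Delta_i$; hence both functions are $W$-invariant. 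Condition (1) is then immediate: $\Gamma$ is meromorphic on $\CC$ and all arguments are affine-linear in $\sigma$, so for each fixed $\underline b$ the function extends meromorphically to $\CC^{\sum a_i}$.

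The crux is the exponential-decay half of condition (2). Here I would use the reflection identities $\Gamma(i\beta)\Gamma(-i\beta) = \tfrac{\pi}{\beta\sinh(\pi\beta)}$ and $\Gamma(1+i\beta)\Gamma(1-i\beta) = \tfrac{\pi\beta}{\sinh(\pi\beta)}$ to rewrite the reciprocal denominators as the entire functions of \eqref{eq:gammabete}, confirming no poles on $\RR^{\sum a_i}$ and exhibiting their growth as governed by $\prod_\beta \sinh(\pi\beta)$. Combining this with the Stirling asymptotic $|\Gamma(\tfrac12+iy)|\sim\sqrt{2\pi}\,e^{-\pi|y|/2}$ gives, as the real $\sigma$ tends to infinity, numerator decay at rate $\exp\!\big(-\tfrac{\pi}{2}\sum_w|w(\sigma)|\big)$ against denominator-reciprocal growth at rate $\exp\!\big(\pi\sum_{\beta\in\Delta_+}|\beta(\sigma)|\big)=\exp\!\big(\tfrac{\pi}{2}\sum_{\alpha\in\Delta}|\alpha(\sigma)|\big)$, up to polynomial factors. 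The net exponent is $-\tfrac{\pi}{2}\big(\sum_w|w(\sigma)|-\sum_{\alpha\in\Delta}|\alpha(\sigma)|\big)$, which is strictly negative precisely by the conical condition of Definition \ref{1.1}. This is the main obstacle and the exact point at which conicality is indispensable; the polynomial prefactors, including the extra $\beta^2$ for $|v\rangle$, change only subexponential factors and so do not affect the sign of the leading exponent, giving exponential decay for both functions.

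Finally, for condition (3) I would apply an abelian monopole operator $\alpha_C(r^\lambda)$, which by Proposition \ref{prop:general_rep} acts as a product of the shift operators $X_{i,r},P_{i,r}$ times polynomial coefficients, shifting each $\sigma_{i,r}$ by half-integer multiples of $i$ and moving $b_{i,r}$ off $0$. Meromorphy (1) is preserved automatically. For (2), the half-integer shifts move the arguments of the numerator $\Gamma$'s off the critical line $\Re=\tfrac12$, but the polynomial factors attached to $P_{i,r}$ and $X_{i,r}$ are exactly those that cancel the $\Gamma$-poles that would otherwise land on $\RR^{\sum a_i}$, in the same way the shift operators act in \eqref{eq:Xm}--\eqref{eq:Pn}; hence no new real poles appear. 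Since the shifts displace the arguments only by a bounded imaginary amount and introduce at most polynomial growth, the strict inequality from the conical condition continues to force exponential decay of $\alpha_C(r^\lambda)|v'\rangle$ and $\alpha_C(r^\lambda)|v\rangle$. This verifies Weyl invariance together with conditions (1)--(3), placing $|v'\rangle$ and $|v\rangle$ in $\FP^{\sum_{a_i}}$.
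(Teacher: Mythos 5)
Your proposal is correct and takes essentially the route the paper intends: the paper states this corollary without a separate argument because it is the assembly of ingredients established elsewhere in the text --- pole-freeness via the entire functions \eqref{eq:gammabete} from Proposition \ref{prop:1gauge}, exponential decay from the same Stirling-plus-conical-condition estimate that appears in Lemma \ref{rmk}, and cancellation of shifted $\Gamma$-poles by the polynomial dressing of the monopole operators for condition (3) of Definition \ref{3.18}. Your write-up simply makes that verification explicit (adding the Weyl-invariance check and the identity $\Gamma(i\beta)\Gamma(-i\beta)=\pi/(\beta\sinh(\pi\beta))$ consistent with \eqref{eq:sinh}), so it agrees with the paper's approach.
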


\begin{Definition}
    For $G$, a product of $\GL(V_i)$, for a fixed nontrivial element \( \m \in \mathfrak{f} \), and some \( a \in \mathbb{Z} \), we define a subrepresentation of Definition \ref{prop:general_rep}, on the \textbf{ground-state-generated space} $\GP$ to be the span (finite linear combinations) of actions of $    \alpha_C   \left( \mathbf{V}_\lambda  \right) $ on
    $|v'\rangle$.
\end{Definition}

We observe that \begin{equation}
\alpha_C(\bV_{-w_0 e} )\alpha_C (V_e) \IGN = -(1+i \sigma_1 + \cdots + i \sigma_n) \cdot|v \rangle , \end{equation}then $|v \rangle \in \GP$.

\section{Twisted Trace on Coulomb branches}  \label{sec:twtr}
The purpose of this section is to prove the following result.

\begin{Theorem} \label{prop:pureg-trace}
For conical theories, the formula
   \begin{equation} \label{eq:GNTrw}
   \Tr(w)= \bign \alpha_C(w) g \IGN =  \bign g \alpha_C (w) \IGN
   \end{equation}
  defines a twisted trace for $\cA_\hbar ( \prod \GL_{a_i} , \bN) $ as operators on the function space $\GP$.

  The twisted trace on polynomials in $\cA_\hbar(G,\bN)$ is defined by the integral:
\begin{equation} \label{eq:trthm}
    \Tr(R (\sigma))=\int_{\mathbb{R}^m} e^{2 \pi i \zeta(\sigma)} R(\sigma) \prod_{Q_0}
     \frac{\prod_{\beta \in \Delta_{+}} (\frac{1}{\pi} \sinh (\pi \beta))^2}{\prod_{w } \frac{1}{\pi} \cosh (\pi w(\sigma+\m))} 
    \mathrm{d} \sigma,
\end{equation}

\end{Theorem}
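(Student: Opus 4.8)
The plan is to establish the two claims of the theorem in sequence: first that the pairing \eqref{eq:GNTrw} genuinely defines a twisted trace, and second that on the polynomial subalgebra $P_{G,\bN}\cong\operatorname{Sym}((\ft+\m)^*)^W$ it reduces to the explicit integral \eqref{eq:trthm}. For the trace property, I would argue exactly as in the abelian warm-up (the \Cref{lemma:alphaC1}/\Cref{lm:alphacwm} computations together with the proof of the $\CC^*$-case trace lemma) and follow the analogue of \cite[Lemma 2.3]{BenWinter}. The crucial inputs are \Cref{lm:alphacwm}, which lets one move $\alpha_C$ past the ground state by converting it to $\bar\alpha_C$ with the opposite monopole charge, and the self-adjointness properties of the shift operators $\cT_\pm$ and of multiplication operators recorded just before \Cref{lemma:alphaC1}. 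Concretely, to verify $\Tr(ab)=\Tr(bg(a))$ one writes the left side as $\bign\alpha_C(a)\alpha_C(b)g\IGN$, uses that $\langle 1|\alpha_C(r^{\pm\lambda})=-\langle1|\bar\alpha_C(r^{\mp\lambda})$ to transfer the $a$-factor onto the bra, and identifies the resulting bra-manipulation with the insertion of the automorphism $g=\Omega$. Since every $\bV_\lambda$ is a $W$-symmetrized sum of abelian monopoles, it suffices to check this on the abelian generators and extend by $W$-invariance.

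For the integral formula, the key reduction is that $\Tr$ vanishes on anything not fixed by $\Omega$ and factors through projection to $\operatorname{Sym}((\ft+\m)^*)^W$, so I only need $\Tr(R(\sigma))$ for $R$ a $W$-invariant polynomial in the Cartan variables $z_{i,r}$. On such elements $\alpha_C(R)$ acts by multiplication (the $z_{i,r}$ are multiplication operators, invariant under the shifts $X_{i,r},P_{i,r}$ as verified just before \Cref{prop:general_rep}), so $\Tr(R)=\bign R(\sigma)\,g\IGN$ becomes a genuine integral against the measure $|\IGN|^2$ weighted by the automorphism factor. Expanding $\IGN$ from \eqref{eq:1withbeta} and pairing via \eqref{eq:GNprd}, each factor $\Gamma(\tfrac12-iw(\sigma+\m))\,\Gamma(\tfrac12+iw(\sigma+\m))$ collapses to $\pi/\cosh(\pi w(\sigma+\m))$ by \eqref{eq:gammacosh}, producing the denominator of the weight, while the root factors $1/(\Gamma(i\beta)\Gamma(-i\beta))=\tfrac1\pi\beta\sinh(\pi\beta)/\pi$-type identities (together with the $\beta^2$ in $|v\rangle$ and a second copy of the positive roots) produce $\prod_{\beta\in\Delta_+}(\tfrac1\pi\sinh\pi\beta)^2$ in the numerator. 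The automorphism $g$ contributes the factor $e^{2\pi i\zeta(\sigma)}$, just as the FI parameter entered \eqref{eq:trace1} in the rank-one case. Assembling these gives precisely \eqref{eq:trthm}.

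The main obstacle is bookkeeping the root-dependent denominators correctly when passing from the bra-ket on $\GP$ to a plain integral over $\ft_G\cong\RR^m$. Two subtleties must be handled with care: first, the ground state $\IGN$ lives in the flavored space with the extra $1/\prod\Gamma(i\beta)\Gamma(-i\beta)$ factors, and one must confirm that pairing $\bign$ against $\IGN$ (rather than $|v\rangle$) yields the stated weight with the squared $\sinh$ numerator — this relies on combining the $\beta^2$ discrepancy between $\IGN$ and $|v\rangle$ with the identity \eqref{eq:gammabete} so that the net root contribution is $\prod_\beta(\tfrac1\pi\sinh\pi\beta)^2$. Second, one must justify convergence: here the conical hypothesis \Cref{1.1} is exactly what guarantees the exponential decay of the $\cosh$-denominators dominates the $\sinh$-numerator growth, so the integral \eqref{eq:trthm} converges absolutely, matching the discussion that the $\Delta$-grading is positive. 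I expect the analytic convergence step and the precise matching of hyperbolic prefactors to be where the real work lies; the algebraic trace identity, by contrast, should follow formally from \Cref{lm:alphacwm} once the rank-one case is in hand.
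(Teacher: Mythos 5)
Your proposal is correct and follows essentially the same route as the paper: the trace identity comes from the $W$-symmetrized abelian intertwining relations plus the analogue of \cite[Lemma 2.3]{BenWinter} — the paper packages your ``check on abelian generators and extend by $W$-invariance'' step as \Cref{lemma:sphere4.2}, where the opposite monopole charge is pinned down precisely as $-w_0\lambda$. The integral formula and convergence are likewise obtained exactly as you describe: pairing $\bign$ against $g\IGN$ so that the Gamma factors collapse via \eqref{eq:gammacosh} and \eqref{eq:sinh} into the $\cosh$-denominator and squared-$\sinh$ numerator, with the conical hypothesis supplying the exponential decay (\Cref{rmk}).
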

Before proving this theorem, we will introduce a few lemmas.
% \begin{Proposition}
%      The space $\cP(\RR \times \ZZ) $  has a well-defined inner product that is closed under the operator representation $\alpha_C$, $\bar{\alpha}_C$.
% \end{Proposition}
% \begin{proof}
% By Proposition \ref{prop:innerprod1}, there is a well-defined inner product on the space $\cP(\RR \times \ZZ)$. We show this is closed under the operator acts by $X$ and $P$.
% This holds since by 
% By \eqref{eq:2.16}-\eqref{eq:2.19} we conclude that the inner product is closed under the representation.
% \end{proof}

% \begin{Proposition}
% For quantized Coulomb branch algebras $\mathcal{A}_h(G, \mathbf{N})$, where $T=\mathbb{C}^*, \mathbf{N}=\mathbb{C}$, one twisted trace is
%     \begin{equation} \label{eq:trace1}
%     \Tr(R (\sigma))=\int_{\mathbb{R}} e^{2 \pi i \zeta(\sigma)} R(\sigma) \mathrm{w}(\sigma)|\mathrm{d} \sigma|,
% \end{equation}
%     where
%     $$
%     \mathrm{w}(\sigma)=\frac{\pi}{\cosh (\pi \sigma)}
%     $$
%     and  $ R(\sigma)$ is a polynomial in $\sigma$.  
% \end{Proposition}

% \begin{proof}
% This is a special case of the general classification given by Etingof, Klyuev, Rains and Stryker. Up to a constant, this is \cite[Example 3.4]{Etingof2021Twisted}.
% \end{proof}
We recall the inner product \cref{eq:GNprd} is the natural multi-variable generalization of \Cref{eq:pairing}.
 Building on this, we may generalize \Cref{prop:pureg-trace} by the representation given in \Cref{lemma:coulomb_representation_ndim} and a direct sum of $n$ copies of \eqref{eq:trace1}. This construction yields the following twisted trace on $\cA_\hbar (T, \bN)$, for  $T=(\mathbb{C}^*)^n, \mathbf{N}=(\mathbb{C})^n$:
    \begin{equation}  \label{prop:TnCn}
        \Tr (R(\sigma ))=\int_{\mathbb{R}^n} e^{2 \pi i \zeta(\sigma)} R(\sigma) \mathrm{w}(\sigma)\mathrm{d} \sigma,
    \end{equation}
    where 
    \begin{equation} 
        \mathrm{w}( \mathrm{\sigma}) =   \prod_i \frac{\pi}{ \cosh (\pi  \sigma_i )},
    \end{equation}
and  $R(\sigma)$ is a polynomial on $\sigma = (\sigma_1, \cdots, \sigma_n)$.

% We choose a new coordinate system $y_1, \ldots, y_n$ on $\mathfrak{t}_{\mathbb{R}}^n$ such that $\mathfrak{t}_{\mathbb{R}}^m$ is the zero set of $y_{m+1}, \ldots, y_n$. Let $w_1, \ldots, w_n$ be the weights of the representation $\mathbf{N}$ expressed as linear functions in these coordinates, corresponding to the standard basis of $t_{\mathbb{R}}^n$.

From \cite[Section 3.1]{BenWinter}, we see that an analogous twisted trace can be defined for quantized Coulomb branch algebras \( \mathcal{A}_\hbar(T, \mathbf{N}) \), where \( T = (\mathbb{C}^*)^m \) and \( \mathbf{N} = \mathbb{C}^n \) with \( n > m \), under the fixed embedding \( T^m \hookrightarrow T^n \) (as specified previously).  
This twisted trace is constructed via an integral over a coset of the real Lie algebra \( \mathfrak{t}_\mathbb{R}^m \).  
% Furthermore, we show that this trace admits an integral formulation by restricting the corresponding integral over \( \mathfrak{t}_\mathbb{R}^n \) to the coset induced by the embedding \( \mathfrak{t}_\mathbb{R}^m \hookrightarrow \mathfrak{t}_\mathbb{R}^n \).  
In coordinates, this means for \ref{prop:TnCn}, restricting to the first $m$ coordinates of $(\sigma_1,\cdots,\sigma_m,\ms)$.
\begin{equation}
    \Tr(R(z))=\int_{\mathbb{R}^n} e^{2 \pi i \zeta(\sigma)} R(\sigma) \mathrm{w}(\sigma)\mathrm{d} \sigma.
\end{equation}
We choose a nontrivial fixed $\ms \in \ff $. Then, as in Section \ref{sec:abelian}, we write elements in $\ft ^ m$ as elements of a coset in $ \ft ^n$. Then, the twisted trace \eqref{lemma:coulomb_representation_ndim} is
    \begin{equation} \label{prop:coulomb_trace_invariant_nontrivial_flavor}
        \Tr(R(z))=\int_{\mathbb{R}^m} e^{2 \pi \zeta (\sigma)} R(\sigma) \mathrm{w}(\sigma+\m)\mathrm{d} \sigma,
    \end{equation}
    where $R(\sigma)$ is a polynomial in the coordinates $\sigma = (\sigma_1, \cdots, \sigma_n)$, and
    \begin{equation} \label{eq:w}
      \mathrm{w}(\sigma+\m) =  \prod_w \prod_i \frac{\pi}{ \cosh (\pi w(\sigma_i+\m))}.
    \end{equation}

% Let \( G = \prod_{i \in Q_0} \operatorname{GL}(V_i) \) with \( V_i = \mathbb{C}^{a_i} \), maximal torus \( T = (\mathbb{C}^*)^{\sum a_i} \), and  \( \mathbf{N} = (\mathbb{C})^{M} \), where $M \geq \sum a_i$.
%  Let \( \Delta_i \) denote the set of roots of \( \operatorname{GL}(V_i) \) (that is, \( \sigma_r - \sigma_s \) for \( r \neq s \) within the factor \( \operatorname{GL}(V_i) \), and $ \mathrm{w}(\sigma+\m)$ defined in \eqref{eq:w}.

% By computation, for a conical theory given by $(G, \bN)$,
% \begin{equation}
%     \langle1\IGN = \prod_i \prod_{\beta \in \Delta_i} \frac{\sinh (\pi \beta)}{\pi} \mathrm{w}(\sigma+\m) .
% \end{equation}

We recall $z =\frac{1}{2} - i \si + \frac{b}{2}$. Positive roots of $\GL_n$ are in the form of $\beta = z_i-z_j$, for $i>j$.
\begin{Lemma} \label{rmk}
For conical theories, all images of monopole operators under $\alpha_C$ and $\bar{\alpha}_C$ acting on the ground state $\IGN \in \FP ^n$ lie in $L^2$ under the inner product \eqref{eq:GNprd}. 
\end{Lemma}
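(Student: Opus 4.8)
The plan is to show that each function $\alpha_C(r^{w\lambda})\IGN$ (and its antiholomorphic counterpart) is square-integrable under the inner product \eqref{eq:GNprd}, i.e.\ that the sum over $\underline{b}\in\ZZ^n$ of the $L^2(\RR^n)$-norms is finite. Since $\IGN$ is supported on $\underline{b}=0$ and the shift operators $X_{i,r},P_{i,r}$ move the support by a bounded amount, the monopole action lands on a single lattice point $\underline{b}_\lambda$, so the sum over $\ZZ^n$ collapses to a single term and the question reduces to integrability over $\RR^n$ for a fixed $\underline{b}$. The estimate then hinges entirely on the asymptotic decay of the integrand as $|\Re(\sigma)|\to\infty$.

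First I would record the precise asymptotics. Applying $\alpha_C(r^{w\lambda})$ to $\IGN$ produces, up to polynomial prefactors coming from \eqref{eq:Xop}--\eqref{eq:Pop}, a product of $\Gamma$-factors $\prod_w \Gamma(\tfrac12 - i\,w(\sigma+\m))$ with shifted arguments, divided by the entire $\beta$-dependent denominator of \eqref{eq:1withbeta}. Pairing $\alpha_C(r^{w\lambda})\IGN$ with itself in \eqref{eq:GNprd} multiplies each numerator $\Gamma$-factor by its complex conjugate, and I would invoke the reflection identity \eqref{eq:gammacosh} to rewrite
\begin{equation}
\Gamma\!\left(\tfrac12 - i\,w(\sigma+\m)\right)\overline{\Gamma\!\left(\tfrac12 - i\,w(\sigma+\m)\right)} = \frac{\pi}{\cosh\!\left(\pi\,w(\sigma+\m)\right)},
\end{equation}
so that the numerator contributes $\prod_w \pi/\cosh(\pi\,w(\sigma+\m))$, which decays exponentially in every direction where some weight $w$ is nonvanishing. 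The denominator, being built from the entire functions in \eqref{eq:gammabete}, grows at most like $\prod_\beta |\sinh(\pi\beta)|^{\pm}$ along the root hyperplanes, i.e.\ at most exponentially in the root directions; the polynomial prefactors from the shift operators grow only polynomially.

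The key step, and the main obstacle, is the convergence estimate: I must show the exponential decay from the weight factors dominates the exponential growth coming from the roots in the denominator. This is exactly where the conical hypothesis of \Cref{1.1} enters. For every $\beta\in\ft_\RR$, the inequality $\sum_j |a_j(\beta)| > \sum_{\alpha\in\Delta}|\alpha(\beta)|$ guarantees that along any ray to infinity the total rate of exponential decay contributed by $\prod_w \cosh^{-1}(\pi w(\sigma+\m))$ strictly exceeds the total exponential growth rate of the denominator $\prod_\beta$. Thus the integrand of $\langle \alpha_C(r^{w\lambda})\IGN \mid \alpha_C(r^{w\lambda})\IGN\rangle$ decays exponentially in all directions, and is in particular a Schwartz-class function on $\RR^n$, so the integral converges. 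I would carry this out by comparing the sum of the weight-direction decay rates against the sum of root-direction growth rates along each face of the relevant fan, reducing to the scalar inequality in \Cref{1.1}. Finally, since the antiholomorphic operators $-\widetilde{X},-\widetilde{P}$ differ only by sign and the shift $\sigma\mapsto\sigma\mp i/2$ conventions, the identical estimate applies to $\bar\alpha_C$, completing the proof.
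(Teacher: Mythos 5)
Your proposal is correct and follows essentially the same route as the paper's proof: reduce to the finite set of $b$-supports so the sum in \eqref{eq:GNprd} collapses, then show the exponential decay of the weight gamma factors $\Gamma\bigl(\tfrac12 - i\,w(\sigma+\m)\bigr)$ dominates all growth in the expression, with the conical condition of \Cref{1.1} supplying the decisive inequality between weight and root contributions. If anything, your bookkeeping is sharper than the paper's: you correctly identify that the reciprocal factors $1/\bigl(\Gamma(i\beta)\Gamma(-i\beta)\bigr) = \beta\sinh(\pi\beta)/\pi$ grow \emph{exponentially} in the root directions --- whereas the paper's proof asserts that this denominator ``grows polynomially by Stirling's approximation'' --- so your explicit comparison of the total weight-direction decay rate against the total root-direction growth rate, reduced to the scalar inequality $\sum_j |a_j(\beta)| > \sum_{\alpha\in\Delta}|\alpha(\beta)|$, is exactly the estimate the lemma needs.
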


\begin{proof}
The norm of the $\Gamma$ function decays exponentially in the imaginary direction.
The conical condition defined in \Cref{1.1} ensures the exponential decay of the ground state \cref{eq:1withbeta} as \(\|\sigma\| \to \infty\) (see \cite[Section 5.1]{GaiottoTempFile} for the reasoning).

The action of \(\alpha_C(\mathbf{V}_\lambda)\) on \(\IGN\) yields a state \(\psi_\lambda = \alpha_C(\mathbf{V}_\lambda) \IGN\) expressible as a finite sum over \(W/W_\lambda\):
\[
\psi_\lambda(\sigma, b) = \sum_{w \in W / W_\lambda} \psi_{\lambda,w}(\sigma, b), \quad \psi_{\lambda,w}(\sigma, b) = \frac{Q_w(\sigma) \cdot \widetilde{\psi}_w(\sigma, b)}{\prod_{\substack{\beta \in \Delta^+ \\ \langle \lambda, \beta \rangle \neq 0}} w\beta(z)},
\]
where \(Q_w(\sigma)\) is a polynomial in \(\sigma\), and \(\widetilde{\psi}_w\) is the ground state evaluated at shifted arguments \((\sigma', b')\) determined by \(\lambda\) and \(w\).
%Since \(\lambda\) is fixed, the shifts in \(\mathbf{b}\) are bounded, so \(\psi_\lambda\) has **finite support in \(\mathbf{b}\)**. Specifically, \(\mathrm{supp}(\psi_\lambda) \subset \{\mathbf{b} : \|\mathbf{b}\|_1 \leq c_\lambda\}\) for some \(c_\lambda > 0\).

The \(L^2\)-norm of \(\psi_\lambda\) is:
\[
\|\psi_\lambda\|^2 = \sum_{b \in \mathbb{Z}^m} \int_{\mathbb{R}^m} |\psi_\lambda(\sigma, b)|^2  d\sigma = \sum_{b \in S} \int_{\mathbb{R}^m} |\psi_\lambda(\sigma, b)|^2  d\sigma,
\]
where \(S = \mathrm{supp}(\psi_\lambda)\) is finite. For each \(b \in S\), the integral is over a finite sum of terms \(\psi_{\lambda,w}\). We use $[G]$ to denote products of gamma functions. Each \(\psi_{\lambda,w}\) has the form:
\[
|\psi_{\lambda,w}(\sigma, b)| = \left| \frac{Q_w(\sigma) \cdot 
[G]}{ \prod \Gamma(i\beta)\Gamma(-i\beta) \cdot \prod w\beta} \right|.
\]
By the conical condition, the numerator decays exponentially as \(\|\sigma\| \to \infty\) due to the gamma functions \(\Gamma\left(\frac{1}{2} - i w(\sigma + \mathbf{m})\right)\).
 The denominator grows polynomially since  \(\prod w\beta\) are products of linear functions, and  \(\Gamma(i\beta)\Gamma(-i\beta)\) grows polynomially by Stirling’s approximation.
The function \(Q_w(\sigma)\) is polynomial in \(\sigma\).

Thus, there exist constants \(C_w, A_w > 0\) such that:
\[
|\psi_{\lambda,w}(\sigma, b)| \leq C_w e^{-A_w \|\sigma\|}, \quad \forall \sigma \in \mathbb{R}^m.
\]
Squaring and integrating gives:
\[
\int_{\mathbb{R}^m} |\psi_{\lambda,w}(\sigma,b)|^2  d\sigma \leq C_w^2 \int_{\mathbb{R}^m} e^{-2A_w \|\sigma\|}  d\sigma < \infty.
\]
Summing over the finite sets \(S\) and \(W/W_\lambda\) confirms \(\|\psi_\lambda\|^2 < \infty\). The same argument holds for \(\bar{\alpha}_C(\mathbf{V}_\lambda) |1\rangle_{G,\mathbf{N}}\) by symmetry.  
\end{proof}

We denote the adjoint of $|v\rangle $ by $ \bign $ such that for the pairing given by Haar measure \eqref{eq:pairing} on $\FP^{m}$,
\begin{equation} 
\bign = \langle |v \rangle , \bullet \rangle.
\end{equation}

\begin{Lemma} \label{lemma:sphere4.2}
Let $\lambda$ be a coweight, the following conditions hold for $w_0\in W$, the longest element.
  \begin{align}
\alpha_C\left(\bV_\lambda\right)\IGN & =\bar{\alpha}_C\left(\bV_{-w_0\lambda}\right)\IGN, & \alpha_C\left(\bV_{-w_0 \lambda}\right)\IGN & =\bar{\alpha}_C\left(\bV_{\lambda}\right)\IGN,\label{eq:IGN} \\
\bign \alpha_C\left(\bV_\lambda\right) & =-\bign \bar{\alpha}_C\left(\bV_{-w_0\lambda}\right), & \bign \alpha_C\left(\bV_{-w_0\lambda}\right) & =-\bign \bar{\alpha}_C\left(\bV_{\lambda}\right). \label{eq:IGNconju}
\end{align}  
\end{Lemma}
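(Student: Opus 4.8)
The plan is to prove \Cref{lemma:sphere4.2} by reducing the non-abelian statement to the abelian identities of \Cref{lm:alphacwm}, exploiting the fact that, after localization, every non-abelian monopole operator $\bV_\lambda$ is expressed through the abelian $r^{w\lambda}$ via \eqref{eq:Vlambda}. The governing principle is that the ground state $\IGN$ of \eqref{eq:1withbeta} is built from $\prod_w \Gamma(\tfrac12 - i w(\sigma+\m))$ divided by the Weyl-symmetric Gamma factor $\prod_\beta \Gamma(i\beta)\Gamma(-i\beta)$, and this Weyl-invariant denominator is precisely what turns the abelian relations into relations among the symmetrized operators. First I would record the key abelian input: by \Cref{lm:alphacwm}, for any coweight $\nu$ one has $\alpha_C(r^\nu)\IGN_T = \bar{\alpha}_C(r^{-\nu})\IGN_T$ on the torus-level ground state, and dually $\bign_T\,\alpha_C(r^\nu) = -\bign_T\,\bar{\alpha}_C(r^{-\nu})$, where the sign arises from the functional equation for $\Gamma$ exactly as in \Cref{lemma:alphaC1}.

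Next I would substitute the expansion \eqref{eq:Vlambda},
\begin{equation}
\alpha_C(\bV_\lambda)\IGN = \sum_{w\in W/W_\lambda} \frac{\alpha_C(r^{w\lambda})\IGN}{\prod_{\langle\lambda,\beta\rangle\neq 0} w\beta},
\end{equation}
and apply the abelian identity termwise. Each numerator $\alpha_C(r^{w\lambda})\IGN$ becomes $\bar{\alpha}_C(r^{-w\lambda})\IGN$. The crux is then a reindexing of the sum: I would show that relabelling $w \mapsto w_0 w$ (equivalently summing over the coset space in the opposite order) converts the collection $\{-w\lambda\}_w$ into $\{w'(-w_0\lambda)\}_{w'}$, matching the expansion of $\bar{\alpha}_C(\bV_{-w_0\lambda})\IGN$. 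Here the appearance of the longest element $w_0$ is exactly what is needed so that $-w_0\lambda$ is again a dominant coweight and the Weyl orbits coincide. The denominators must be tracked carefully: under $w\mapsto w_0w$ the product $\prod_\beta w\beta$ picks up $\det(w_0) = (-1)^{\ell(w_0)}$ relative to $\prod_\beta (w_0 w)\beta$, and I would verify that this sign is absorbed consistently — in particular that the holomorphic/antiholomorphic signs from the abelian identity combine with the denominator sign to give equality (rather than a residual sign) in \eqref{eq:IGN}, and the opposite sign in the dual identities \eqref{eq:IGNconju}, as the stated asymmetry between the two lines demands.

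For the dual statements \eqref{eq:IGNconju} I would repeat the argument with $\bign$ in place of $\IGN$, now invoking the minus sign in the covector version of \Cref{lm:alphacwm}; since the functional $\bign$ is the adjoint of $|v\rangle$ under the pairing \eqref{eq:GNprd} and $|v\rangle$ differs from $\IGN$ only by the entire factor $\prod_\beta \beta^2$ of \eqref{eq:gammabete}, the adjointness of $\cT_\pm$ and of multiplication operators carries the abelian relation through to the symmetrized level without further obstruction. I expect the main obstacle to be the bookkeeping of signs and the precise verification that the reindexing $w\mapsto w_0 w$ genuinely matches the two Weyl-orbit sums, including showing that the pole-cancellation established in the proof of \Cref{prop:general_rep} guarantees each symmetrized expression is a well-defined element of $\FP^n$ so that the termwise manipulation is legitimate rather than merely formal. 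A secondary check is that $-w_0\lambda$ lies in the correct cone and $W_{-w_0\lambda} = w_0 W_\lambda w_0$, so the index set $W/W_{-w_0\lambda}$ is in natural bijection with $W/W_\lambda$; once this is confirmed the equality of the two finite sums is immediate.
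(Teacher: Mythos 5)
Your proposal is correct and takes essentially the same route as the paper's proof: the paper likewise expands the nonabelian monopoles through \cref{eq:opg}, applies the abelian identities of \Cref{lm:alphacwm} termwise to the ground state \eqref{eq:1withbeta}, reindexes the coset sum by the longest element (see \cref{eq:alpha_C_expression}, where the substitution is right multiplication $w \mapsto w w_0$ --- your ``$w \mapsto w_0 w$'' should be this, a cosmetic slip since you correctly record $W_{-w_0\lambda} = w_0 W_\lambda w_0$ and match the orbits as sets), and obtains the covector identities \eqref{eq:IGNconju} from \Cref{lm:alphacwm} together with the $W$-invariance of $\bign$. The only difference is presentational: the paper carries out the computation explicitly for a fundamental coweight $\lambda = e$ and declares the general case analogous, whereas you keep $\lambda$ general and explicitly flag the sign and pole-cancellation bookkeeping that the paper leaves implicit.
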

\begin{proof}
   % We recall that the algebra \( \mathcal{A}_\hbar(G, \mathbf{N}) \) is embedded in \( \mathcal{A}_\hbar\left( G, 0 \right) \). As a consequence, the denominator of the actions
   %  % \cref{prop:pureg-trace}
   %  is 
   Later in the proof, we will use the equations of meromorphic functions
\begin{equation} \label{eq:sinh}
   \frac{1}{\prod_{i \in Q_0} \prod_{\beta \in \Delta_i} \beta} \prod_{i \in Q_0} \prod_{\beta \in \Delta_{i,+}} \frac{1}{\Gamma(i\beta)\Gamma(-i\beta) } = \prod_{i \in Q_0} \prod_{\beta \in \Delta_{i,+}} \frac{\sinh(\pi \beta)}{\pi \beta}.
\end{equation}

 For a general abelian theory, this is \Cref{lm:alphacwm}.

 The action of nonabelian monopole operators can be written in terms of the abelian monopole operators using the formula 
 \cref{eq:opg}. 
 Let $e$ be a fundamental coweight. For better illustration, we provide a proof for $\lambda = e$. The analogous proof for a general $\lambda$ holds.
 The following holds for all $\psi \in \GP$.
\begin{equation}
\label{eq:alpha_C_expression} 
\begin{aligned}
\alpha_C\left(\mathbf{V}_{-w_0e}\right)\psi   
&= \sum_{w \in W/W_{-w_0e}} \frac{\prod_r P_{i,r}^{\xi_{i,r}\left(w ({-w_0e})\right)}}{\displaystyle \prod_{\substack{\beta \in \Delta^+ \\ \langle {-w_0e}, \beta \rangle \neq 0}} w\beta} \psi \\
&= \sum_{(ww_0) \in W/W_{-w_0 e}} \frac{\prod_r P_{i,r}^{\xi_{i,r}\left(w w_0 (-w_0 e)\right)}}{\displaystyle \prod_{\substack{\beta \in \Delta^+ \\ \langle -w_0 e, \beta \rangle \neq 0}} (ww_0)\beta} \psi \\
&= \sum_{w \in W/W_{e}} \frac{\prod_r P_{i,r}^{\xi_{i,r}\left(-w e)\right)}}{\displaystyle \prod_{\substack{\beta \in \Delta^+ \\ \langle -w_0 e, \beta \rangle \neq 0}} w\beta} \psi.
\end{aligned}
\end{equation}
For $\IGN$, by \cref{eq:sinh}, and \Cref{lm:alphacwm}, the equation \eqref{eq:alpha_C_expression}  gives
\begin{equation}
 \alpha_C\left(\mathbf{V}_{-w_0e}\right)\IGN    = \sum_{w \in W/W_{e}} \frac{\prod_r -\widetilde{P}_{i,r}^{\xi_{i,r}\left(-w e)\right)}}{\displaystyle \prod_{\substack{\beta \in \Delta^+ \\ \langle -w_0 e, \beta \rangle \neq 0}} w\beta} \IGN =\bar{\alpha}_C\left(\bV_e\right) \IGN .
\end{equation}

By a similar computation,
\begin{equation}  \alpha_C\left(\bV_e\right)\IGN =\bar{\alpha}_C\left(\bV_{-w_0e}\right)\IGN .
\end{equation} 
holds.
 
Similarly, the adjoint relations \eqref{eq:IGNconju} follow from \Cref{lm:alphacwm}  and the \(W\)-invariance of $\bign$ .
\end{proof}

% \begin{Proposition}\label{prop:pureg-trace}
% For the quantized Coulomb branch algebra \( \mathcal{A}_\hbar(G, \mathbf{N}) \), the twisted trace for a nontrivial fixed \( \zeta \in \mathfrak{t} \) is given by
% \begin{equation}
% \Tr (R(\sigma)) = \int_{\mathbb{R}^{\sum a_i}} e^{2\pi \zeta(\sigma)} R(\sigma) \prod_{i } \prod_{\beta \in \Delta_i} \frac{\sinh(\pi \beta)}{\pi}  \mathrm{w}(\sigma+\m) \, |d\sigma|.
% \end{equation}
% where \( R(\sigma) \) is a polynomial in \( \sigma = (\sigma_1, \ldots, \sigma_{\sum a_i}) \).
% \end{Proposition} 

Now we are ready to prove the main theorem.
\begin{proof}[Proof of \Cref{prop:pureg-trace}]
We give the proof for $G= \GL_n$, and for multiple copies, an analogous proof holds. 
Assume  $(G, \mathbf{N})$  is conical. This ensures convergence of the integral \cref{eq:trthm} via exponential decay (see  \Cref{rmk}).
To show \cref{eq:GNTrw} holds, we first recall \cref{eq:opg}. Since $\cA_\hbar(G, \bN)$ is generated by $\bV_\lambda$'s, it is enough to consider $\alpha_C (\bV_\lambda )$ for \cref{eq:GNTrw}.

By \Cref{lemma:sphere4.2}, and an analogous argument of \cite[Lemma 2.3]{BenWinter}, the equation \cref{eq:GNTrw} holds.
By \cref{eq:sinh},
\begin{equation}
\begin{split}
   \bign g\alpha_C(R (\sigma)) \IGN
    &
     = \int_{\RR^m} e^{2 \pi i \zeta(\sigma)} R(\sigma)  \frac{\prod_w \Gamma\left(\frac{1}{2}+i w(\sigma+\m)\right)}{\prod_{\beta \in \Delta_{+}} \Gamma(i \beta) \Gamma(-i \beta) \beta} \cdot
    \frac{\prod_w \Gamma\left(\frac{1}{2}-i w(\sigma+\m)\right)}{\prod_{\beta \in \Delta_{+}} \Gamma(i \beta) \Gamma(-i \beta) \beta} d \sigma  \\ &
       =\int_{\RR^m} e^{2 \pi i \zeta(\sigma)} R(\sigma) \frac{\prod_{\beta \in \Delta_{+}} (\frac{1}{\pi} \sinh (\pi \beta))^2}{\prod_{w } \frac{1}{\pi} \cosh (\pi w(\sigma+\m))} d \sigma .
\end{split}
\end{equation}
Substituting \cref{eq:w} gives \cref{eq:trthm} from \cref{eq:GNTrw}.

The uniqueness of the twisted trace is given by the uniqueness of the twisted trace of abelian theories and \cref{368}.
\end{proof}

Re-writing this proposition in the language of quiver gauge theory gives us \Cref{prop:vector}.
\begin{Remark}
    The operator representation exists for all type A quantized Coulomb branches; however, this trace only exists for quantized Coulomb branches of conical theories when \cref{eq:trthm} converges.
\end{Remark}
\section{Example}
\subsection{Example: $G =\CC^*, \bN = \CC^2$ }
\label{sec:C*N2}
For representation of $\mathcal{A}_\hbar (G, \bN) $, we consider the following exact sequence on the Lie algebras:
\begin{equation}
    1 \rightarrow \CC \rightarrow \CC^2 \rightarrow \CC \rightarrow 1.
  \end{equation}
  We recall the weights are $\pm 1$.
By \ref{lemma:coulomb_representation_nontrivial_flavor}, we have 
\begin{equation}
    |1\rangle_{\CC, \CC^2}=\delta_{b, 0} \Gamma\left(\frac{1}{2}-i\sigma+\m)\right)
    \Gamma\left(\frac{1}{2}+i(\sigma+\m)\right)  ,
    \end{equation}
%    and 
% \begin{align}
%   r^1  r^{-1} &\mapsto \left(\frac{1}{2}-i(\sigma+\beta_1)  - \frac{b}{2} + \frac{1}{2} \right)
%   \left(\frac{1}{2}-i(\sigma+\beta_2) + \frac{b}{2} -\frac{1}{2} \right) \\
%     r^{-1} r^1  & \mapsto  \left(\frac{1}{2}-i(\sigma+\beta_1)  + \frac{b}{2}+ \frac{1}{2}\right)
%   \left(\frac{1}{2}-i(\sigma+\beta_2)  + \frac{b}{2}+\frac{1}{2} \right) 
% \end{align}
for coweights $\beta_1$ and $\beta_2$.
The integral form of this twisted trace given by \eqref{prop:coulomb_trace_invariant_nontrivial_flavor} is 
 \begin{equation} 
        \Tr (R(z))=\int_{\mathbb{R}} e^{2 \pi \zeta (\sigma)} R(\sigma) \mathrm{w}(\sigma+\m)\mathrm{d} \sigma,
    \end{equation}
with 
    \begin{equation}
      \mathrm{w}(\sigma+\m) =  \frac{\pi^2}{ \cosh (\pi 
      (\sigma+\m)) \cosh (\pi 
     (-1) (\sigma+\m))}.
    \end{equation}

\subsection{Example: $G= \GL_2$, $\bN= \CC^2$}
The maximal torus of $\GL_2$ is $T=(\CC^*)^2$. We denote the coweights as $\sigma_1$ and $\sigma_2$. They are coordinates of $\sigma$, viewed as functions over $\ft$. Therefore $\sigma_i$'s are in the dual of $\ft$. The roots of $\GL_2(\mathbb{C})$ are $\pm\left(z_1-z_2\right) = \pm\left( \frac{1}{2}(b_1 - b_2) - i(\si_1-\si_2)\right) $, forming a root system of type $A_1$.

We denote $\C = \left( \frac{1}{2}(b_1 - b_2) - i(\si_1-\si_2)\right)$, $\D=\sigma_1-\sigma_2$.

By Proposition \ref{prop:1gauge},
\begin{equation}
|v'\rangle_{\GL_2, \CC^2}=\delta_{b_1, 0} \delta_{b_2,0}\frac{\Gamma\left(z_1\right)\Gamma\left(z_2\right)}{ \Gamma( \C)^2 }.
\end{equation}

The twisted trace for $\cA_\hbar (\GL_2 , \CC^2 )$ is given by
\begin{equation}  
\Tr(R(\sigma)) = \int_{\RR^2} e^{2\pi \zeta(\sigma)} R(\sigma) \, \mathrm{w}(\sigma + \m) \, d\sigma,  
\end{equation}  
where $R(\sigma)\in P_{\GL_2, \CC^2}$,
\begin{equation}
\mathrm{w}(\sigma + \m) =H =\delta_{b, 0} \frac{\Gamma\left(\frac{1}{2}+i \sigma_1\right) \Gamma\left(\frac{1}{2}+i \sigma_2\right) \Gamma\left(\frac{1}{2}-i \sigma_1\right) \Gamma\left(\frac{1}{2}-i \sigma_2\right) }{\Gamma\left(-i\D)\right)^2 \Gamma\left(i\D\right)^2 \D ^2} =  
\frac{\sinh ^2\left(\pi\D \right)}{\cosh \left(\pi \sigma_1\right) \cosh \left(\pi \sigma_2\right)}.
\end{equation}

We want to compute:
\begin{equation}
\alpha_C\left(\mathbf{V}_{(0,-1)}\right) \alpha_C\left(\mathbf{V}_{(1,0)}\right)\IGN.
\end{equation}

For $G = \GL_2$, $\la = (1, 0)$, the Weyl group is $\Weyl = S_2$ and the stabilizer $\Weyl_\la$ is trivial. The only positive root is $\beta = e_1 - e_2$, with $\langle \la, \beta \rangle = 1 \neq 0$. Thus, the formula simplifies to:
\[
[\alpha_C(\bV_{(1,0)}) \psi](\bs, \mathbf{b}) = \frac{[\alpha_C(r^{(1,0)}) \psi](\bs, \mathbf{b}) - [\alpha_C(r^{(0,1)}) \psi](\bs, \mathbf{b})}{\C},
\]
where $z_i = \tfrac{1}{2} - i\si_i + \tfrac{1}{2}b_i$ are the complex weight variables.

By \cref{798}, we have:

$$
\begin{aligned}
& \alpha_C\left(r^{1,0}\right)\IGN=X_1\IGN=\delta_{0, b_2} \delta_{1, b_1} \frac{\Gamma\left(\frac{1}{2}\left(1+b_1-2 i \sigma_1\right)\right) \Gamma\left(\frac{1}{2}-i \sigma_2\right)}{\Gamma\left(\left(z_1 - z_2\right)\right)^2 }, \\
& \alpha_C\left(r^{0,1}\right)\IGN=X_2\IGN=\delta_{0, b_1} \delta_{1, b_2} \frac{\Gamma\left(\frac{1}{2}\left(1+b_2-2 i \sigma_2\right)\right) \Gamma\left(\frac{1}{2}-i \sigma_1\right)}{\Gamma\left(\left(z_1 - z_2\right)\right)^2 }.
\end{aligned}
$$
Then,
$$
\alpha_C\left(\mathbf{V}_{(1,0)}\right)|v'\rangle=\frac{1}{z_1-z_2}\left[\delta_{0, b_2} \delta_{1, b_1} \frac{\Gamma\left(1-i \sigma_1\right) \Gamma\left(\frac{1}{2}-i \sigma_2\right)}{\Gamma\left(i\left(z_1-z_2\right)\right)^2}-\delta_{0, b_1} \delta_{1, b_2} \frac{\Gamma\left(1-i \sigma_2\right) \Gamma\left(\frac{1}{2}-i \sigma_1\right)}{\Gamma\left(i\left(z_1-z_2\right)\right)^2}\right] .
$$
The operators $P_i$ act as:
$$
\left(P_i \psi\right)(\sigma, b)=\left(\frac{1}{2}+i \sigma_i+\frac{b_i}{2}\right) \psi\left(\sigma_i-\frac{i}{2}, b_i+1\right).
$$

We know that 

$$
\alpha_C\left(\mathbf{V}_{(0,-1)}\right) \alpha_C\left(\mathbf{V}_{(1,0)}\right)\IGN=\left(\frac{P_2-P_1}{z_1-z_2}\right)\left(\alpha_C\left(\mathbf{V}_{(1,0)}\right)\IGN \right).
$$

Let:

$$
\Psi=\alpha_C\left(\mathbf{V}_{(1,0)}\right)\IGN= \delta_{b_2, 0} \delta_{b_1, 1} A(\si) +\delta_{b_1, 0} \delta_{b_2, 1} B (\si),
$$

where

$$
\begin{aligned}
& A(\sigma)=\frac{\Gamma\left(1-i \sigma_1\right) \Gamma\left(\frac{1}{2}-i \sigma_2\right)}{\C \Gamma\left(\C \right)^2}, \\
& B(\sigma)=-\frac{\Gamma\left(1-i \sigma_2\right) \Gamma\left(\frac{1}{2}-i \sigma_1\right)}{\C \Gamma\left(\C\right)^2}.
\end{aligned}
$$
Then:
$$
\alpha_C\left(\mathbf{V}_{(0,-1)}\right) \Psi=\frac{1}{z_1-z_2}\left(P_2 \Psi -P_1\Psi\right).
$$
Let $\Delta=\sigma_1-\sigma_2$. Then, for $b = (0,0)$, $$ z_1 - z_2 = i\D.$$
%  For $ b=(1,0)$ and $b = (0, -1)$: $$z_1-z_2=\frac{1}{2}-i \Delta, i\left(z_1-z_2\right)=\Delta+\frac{i}{2} $$
% For $b=(0,1)$: $$z_1-z_2=-\frac{1}{2}-i \Delta, i\left(z_1-z_2\right)=\Delta-\frac{i}{2}
% $$
For $\left(b_1, b_2\right)=(1,-1)$, $$ z_1 - z_2 = 1- i \D.$$
For $\left(b_1, b_2\right)=(-1,1)$, $$ z_1 - z_2 = -1- i \D .$$
For $(b_1,b_2)= (0,0)$, the term is:
$$
-\frac{ i \,\big(-i + \sigma_{1} + \sigma_{2}\big)\, \Gamma\!\left(\tfrac{1}{2} - i\sigma_{1}\right)\, \Gamma\!\left(\tfrac{1}{2} - i\sigma_{2}\right)}{\left[\Gamma\!\left(1 - i\sigma_{1} + i\sigma_{2}\right)\right]^{2}}.
$$
For $\left(b_1, b_2\right)=(1,-1)$, the term is:
\[
-\frac{\Gamma\!\left(1 - i\sigma_{1}\right) \, \Gamma\!\left(1 - i\sigma_{2}\right)}{\left[\Gamma\!\left(2 - i\sigma_{1} + i\sigma_{2}\right)\right]^{2}}.
\]
For $\left(b_1, b_2\right)=(-1,1)$, the term is:
\[
-\frac{\Gamma\!\left(1 - i\sigma_{1}\right)\,\Gamma\!\left(1 - i\sigma_{2}\right)}{\left[\Gamma\!\left(-i(\sigma_{1}-\sigma_{2})\right)\right]^{2}}.
\]

After simplification, we have:
\begin{equation}
\begin{aligned}
\alpha_C\left(\mathbf{V}_{(0,-1)}\right) \Psi = - \delta_{b_1,0} \delta_{b_2,0}
\frac{ i \,\big(-i + \sigma_{1} + \sigma_{2}\big)\, \Gamma\!\left(\tfrac{1}{2} - i\sigma_{1}\right)\, \Gamma\!\left(\tfrac{1}{2} - i\sigma_{2}\right)}{\left[\Gamma\!\left(1 - i\sigma_{1} + i\sigma_{2}\right)\right]^{2}}  \\
-
 \delta_{b_1,1} \delta_{b_2,-1}\frac{\Gamma\!\left(1 - i\sigma_{1}\right) \, \Gamma\!\left(1 - i\sigma_{2}\right)}{\left[\Gamma\!\left(2 - i\sigma_{1} + i\sigma_{2}\right)\right]^{2}}\\
- \delta_{b_1,-1} \delta_{b_2,1}\frac{\Gamma\!\left(1 - i\sigma_{1}\right)\,\Gamma\!\left(1 - i\sigma_{2}\right)}{\left[\Gamma\!\left(-i(\sigma_{1}-\sigma_{2})\right)\right]^{2}} .
\end{aligned}
\end{equation}

The twisted trace is
\begin{equation}
\operatorname{Tr}\left(\mathbf{V}_{(0,-1)} \mathbf{V}_{(1,0)}\right)=\langle v| g \alpha_C\left(\mathbf{V}_{(0,-1)} \mathbf{V}_{(1,0)}\right)|v'\rangle .
\end{equation}
Since $|1\rangle$ is supported only at $b=(0,0)$, this becomes
$$
\operatorname{Tr}=\int_{\mathbb{R}^2} \langle v| \cdot e^{2 \pi i \zeta(\sigma)} \cdot\left[\alpha_C\left(\mathbf{V}_{(0,-1)} \mathbf{V}_{(1,0)}\right)|v'\rangle\right]_{(b=(0,0))} d \sigma_1 d \sigma_2.
$$
Substituting the expressions, we obtain
$$\begin{aligned}  |v\rangle(\sigma, 0)& =\frac{\Gamma\left(\frac{1}{2}-i \sigma_1\right) \Gamma\left(\frac{1}{2}-i \sigma_2\right)}{\Gamma(-i \Delta)^2(-i \Delta)^2}, \\  \left[\alpha_C\left(\mathbf{V}_{(0,-1)} \mathbf{V}_{(1,0)}\right)|v'\rangle\right]_{(0,0)} &
= - \frac{ i \,\big(-i + \sigma_{1} + \sigma_{2}\big)\, \Gamma\!\left(\tfrac{1}{2} - i\sigma_{1}\right)\, \Gamma\!\left(\tfrac{1}{2} - i\sigma_{2}\right)}{\left[\Gamma\!\left(1 - i\sigma_{1} + i\sigma_{2}\right)\right]^{2}}  \\ &
=   \frac{ - \,\big(1 + i \sigma_{1} + i \sigma_{2}\big)\, \Gamma\!\left(\tfrac{1}{2} - i\sigma_{1}\right)\, \Gamma\!\left(\tfrac{1}{2} - i\sigma_{2}\right)}{\left[\Gamma\!\left(1 - i\sigma_{1} + i\sigma_{2}\right)\right]^{2}} .\end{aligned}$$
we obtain:
\begin{equation}\label{5.11}
    \begin{aligned}
     &\operatorname{Tr} \left(\mathbf{V}_{(0,-1)} \mathbf{V}_{(1,0)}\right) = \\
    & =\int_{\mathbb{R}^2} e^{2 \pi i \zeta(\sigma)} \cdot \frac{\left(-1-i\left(\sigma_1+\sigma_2\right)\right)}{\Gamma(-i \Delta)^2(-i \Delta)^2 \Gamma^2(1-i \Delta)} \cdot\left|\Gamma\left(\frac{1}{2}-i \sigma_1\right)\right|^2\left|\Gamma\left(\frac{1}{2}-i \sigma_2\right)\right|^2 d \sigma_1 d \sigma_2 \\  & =
     \int_{\mathbb{R}^2} e^{2 \pi i \zeta(\sigma)} \cdot \frac{\left(-1-i\left(\sigma_1+\sigma_2\right)\right)}{ \Gamma^4(1-i \Delta)} \cdot\left|\Gamma\left(\frac{1}{2}-i \sigma_1\right)\right|^2\left|\Gamma\left(\frac{1}{2}-i \sigma_2\right)\right|^2 d \sigma_1 d \sigma_2 .
    \end{aligned} 
\end{equation}
The second line of \cref{5.11} is obtained from the first line by \cref{funcgamma}. This expression shows that the integrand of \cref{5.11} is entire. 

Using the identities \cref{funcgamma} and
$$
\begin{aligned}
& \left|\Gamma\left(\frac{1}{2}-i \sigma\right)\right|^2=\frac{\pi}{\cosh (\pi \sigma)} \text {, } \\
& |\Gamma(1+ i \D)|^2=\frac{\pi \D}{\sinh \pi \D }\text {, }
\end{aligned}
$$
we compute the denominator:
$$
\Gamma(-i \Delta)^2(-i \Delta)^2 \Gamma(1-i \Delta)^2= \frac{\pi^2 \D^2 }{ \sinh ^2(-\pi \D)} = \frac{\pi^2 \D^2 }{ \sinh ^2(\pi \D)}.
$$
Also, the numerator includes:
$$
\left|\Gamma\left(\frac{1}{2}-i \sigma_1\right)\right|^2\left|\Gamma\left(\frac{1}{2}-i \sigma_2\right)\right|^2=\frac{\pi^2}{\cosh \left(\pi \sigma_1\right) \cosh \left(\pi \sigma_2\right)} .
$$
Therefore, the integrand simplifies to:
$$
e^{2 \pi i \zeta(\sigma)} \cdot\left(-1-i \left(\sigma_1+\sigma_2\right)\right) \cdot \frac{1}{\Delta^2} \cdot \frac{\sinh ^2(\pi \Delta)}{\cosh \left(\pi \sigma_1\right) \cosh \left(\pi \sigma_2\right)} .
$$
The twisted trace is given by:
\begin{equation} 
\operatorname{Tr}\left(\mathbf{V}_{(0,-1)} \mathbf{V}_{(1,0)}\right)=\int_{\mathbb{R}^2} e^{2 \pi i \zeta(\sigma)}\left(-1-i\left(\sigma_1+\sigma_2\right)\right) \cdot \frac{1}{(\sigma_1-\sigma_2)^2} \cdot \frac{\sinh ^2\left(\pi\left(\sigma_1-\sigma_2\right)\right)}{\cosh \left(\pi \sigma_1\right) \cosh \left(\pi \sigma_2\right)} d \sigma_1 d \sigma_2 .
\end{equation}

\bibliographystyle{plainnat}
\bibliofont
\bibliography{analyticsymplecticdualityref}

\end{document}